\theoremstyle{plain}
\newtheorem{Thm}{Theorem}[section]
\newtheorem{Thm*}{Theorem}[section]
\newtheorem{Thm'}[Thm]{``Theorem"}
\newtheorem{Cor}[Thm]{Corollary}
\newtheorem{Prop}[Thm]{Proposition}
\newtheorem{Lem}[Thm]{Lemma}
\newtheorem{Cl}[Thm]{Claim}
\theoremstyle{definition}
\newtheorem{Emp}[Thm]{}
\numberwithin{equation}{section}
\newcommand{\pp}{\boxtimes}
\newcommand{\ov}{\overline}
\newcommand{\isom}{\overset {\thicksim}{\to}}
\newcommand{\bisom}{\overset {\thicksim}{\leftarrow}}
\newcommand{\B}[1]{\mathbb#1}
\newcommand{\cal}[1]{\mathcal{#1}}
\newcommand{\C}[1]{\cal#1}
\newcommand{\Om}{\Omega}
\newcommand{\Si}{\Sigma}
\newcommand{\om}{\omega}
\newcommand{\surj}{\twoheadrightarrow}
\newcommand{\lra}{\longrightarrow}
\newcommand{\lla}{\longleftarrow}
\newcommand{\hra}{\hookrightarrow}
\newcommand{\wt}{\widetilde}
\newcommand{\Gm}{\Gamma}
\newcommand{\G}{\mathbf{G}}
\newcommand{\D}{\mathbf{D}}
\newcommand{\bb}{\mathbf{B}}
\renewcommand{\P}{\mathbf{P}}
\newcommand{\M}{\mathbf{M}}
\newcommand{\X}{\mathbf{X}}
\newcommand{\Y}{\mathbf{Y}}
\newcommand{\Z}{\mathbf{Z}}
\newcommand{\U}{\mathbf{U}}
\newcommand{\V}{\mathbf{V}}
\renewcommand{\H}{\mathbf{H}}
\renewcommand{\c}{\mathbf{c}}
\newcommand{\K}{\mathbf{K}}
\newcommand{\T}{\mathbf{T}}
\renewcommand{\S}{\mathbf{S}}
\renewcommand{\L}{\mathbf{L}}
\newcommand{\gm}{\gamma}
\newcommand{\dt}{\delta}
\newcommand{\Dt}{\Delta}
\newcommand{\bs}{\backslash}
\newcommand{\m}{^{\times}}
\newcommand{\un}[1]{\underline{#1}}
\newcommand{\al}{\alpha}
\newcommand{\la}{\lambda}
\newcommand{\form}[1]{(\ref{Eq:#1})}
\newcommand{\rl}[1]{Lemma \ref{L:#1}}
\newcommand{\rcl}[1]{Claim \ref{C:#1}}
\newcommand{\rp}[1]{Proposition \ref{P:#1}}
\newcommand{\re}[1]{\ref{E:#1}}
\newcommand{\rco}[1]{Corollary \ref{C:#1}}
\newcommand{\rt}[1] {Theorem \ref{T:#1}}
\newcommand{\sm}{\smallsetminus}
\newcommand{\pr}{\operatorname{pr}}
\newcommand{\im}{\operatorname{Im}}
\newcommand{\unip}{\operatorname{un}}
\newcommand{\Spec}{\operatorname{Spec}}
\newcommand{\Res}{\operatorname{Res}}
\newcommand{\st}{\operatorname{st}}
\newcommand{\Ad}{\operatorname{Ad}}
\newcommand{\Stab}{\operatorname{Stab}}
\newcommand{\Tr}{\operatorname{Tr}}
\renewcommand{\sc}{\operatorname{sc}}
\newcommand{\res}{\operatorname{res}}
\newcommand{\Lie}{\operatorname{Lie}}
\newcommand{\Id}{\operatorname{Id}}
\newcommand{\der}{\operatorname{der}}
\newcommand{\Rep}{\operatorname{Rep}}
\newcommand{\cl}{\operatorname{cl}}
\newcommand{\End}{\operatorname{End}}
\newcommand{\Ind}{\operatorname{Ind}}
\newcommand{\sat}{\operatorname{sat}}
\newcommand{\Hom}{\operatorname{Hom}}
\newcommand{\reg}{\operatorname{reg}}
\newcommand{\rss}{\operatorname{rss}}
\newcommand{\wh}{\widehat}
\newcommand{\GL}{\mathbf{GL}}
\begin{document}

\title[Geometric approach to parabolic induction]
{Geometric approach to parabolic induction}

%\maketitle

%\author{Roman Bezrukavnikov}
%\address{Department of Mathematics\\
%Massachusetts Institute of Technology\\
%77 Massachusetts Avenue\\
%Cambridge, MA 02139, USA} \email{bezrukav@math.mit.edu}
%\date{\today}

\author{David Kazhdan}
\address{Institute of Mathematics\\
The Hebrew University of Jerusalem\\
Givat-Ram, Jerusalem,  91904\\
Israel} \email{kazhdan@math.huji.ac.il}

\author{Yakov Varshavsky}

\address{Institute of Mathematics\\
The Hebrew University of Jerusalem\\
Givat-Ram, Jerusalem,  91904\\
Israel} \email{vyakov@math.huji.ac.il}

\thanks{Mathematics Subject Classification (2010): 22E50 (22E35)}

%\thanks{Address: Institute of Mathematics,
%the Hebrew University of Jerusalem, Givat-Ram, Jerusalem,  91904,
%Israel; e-mail: kazhdan@math.huji.ac.il, vyakov@math.huji.ac.il}

\thanks{The project have received funding from ERC under grant agreement No 669655, Y.V. was partially 
supported by the ISF grant 1017/13.}
%\abstract
\begin{abstract}
In this note we construct a ``restriction" map from the cocenter of a reductive group $G$ over a local
non-archimedean field $F$ to the cocenter of a Levi subgroup. We show that the dual map corresponds to parabolic induction and
deduce that parabolic induction preserves stability. We also give a new (purely geometric) proof that the character of normalized parabolic induction does not depend on the parabolic subgroup. In the appendix, we use a similar argument to extend a theorem of Lusztig--Spaltenstein on induced unipotent classes to all infinite fields. We also prove a group version of a theorem of Harish-Chandra about the density of the span of regular semisimple orbital integrals.
\end{abstract}
\maketitle

\centerline{\em To Iosif Bernstein with gratitude and best wishes on his birthday}

\tableofcontents

\section*{Introduction}
\noindent Let $\G$ be a linear algebraic group over a local non-archimedean field $F$, and let $G=\G(F)$.  We denote by $\C{H}(G)$ the space of smooth measures with compact support on $G$, by $\C{H}(G)_{G}$ the space of the coinvariants of $\C{H}(G)$ with respect to the adjoint action of $G$, and let $\wh{C}^{G}(G):=\Hom_{\B{C}}(\C{H}(G)_{G},\B{C})$ be the space of invariant generalized functions on $G$.
To every admissible representation $\pi$ of $G$, one can associate its character $\chi_{\pi}\in\wh{C}^{G}(G)$.

Now assume that $\G$ is connected reductive, $\P\subset \G$ a parabolic subgroup, $\M\subset\P$ a Levi subgroup,
and $\U\subset\P$ the unipotent radical. For every admissible representation $\rho$ of $M=\M(F)$, we denote by
$\pi=\un{i}_{P;M}^G(\rho)$ the admissible representation of $G$ obtained by the normalized parabolic induction of $\rho$.

In this work we construct a continuous map $i_{P;M}^G:\wh{C}^{M}(M)\to \wh{C}^{G}(G)$, satisfying
$i_{P;M}^G(\chi_{\rho})=\chi_{\pi}$ for every $\rho$ as above. Namely,
we construct a ``restriction" map $r_{P;M}^G:\C{H}(G)_{G}\to \C{H}(M)_{M}$, and define $i_{P;M}^G$ to be the linear dual of
$r_{P;M}^G$.

Then we show that $i_{P;M}^G$ does not depend on $\P\supset\M$. From this
we conclude that for each $\rho$ as above, the set of composition factors of $\un{i}_{P;M}^G(\rho)$
does not depend on $\P$. This gives a geometric proof of \cite[Lem 5.4 (iii)]{BDK}
\footnote{It was shown in \cite[Thm B]{Ka2} that the span of characters of smooth irreducible representations of $G$
is dense in $\wh{C}^{G}(G)$. Therefore the independence assertion for $i_{P;M}^G$
follows from the corresponding result for characters (\cite[Lem 5.4 (iii)]{BDK}).}.

Finally, we show that $i_{P;M}^G$ preserves stability. This implies that parabolic induction of
a stable representation is stable. This result is considered to be well-known by specialists,
but does not seem to appear in a written form.

%Notice that all of our arguments are completely geometric.
%In particular, the field of coefficients can be taken any field of characteristic zero.

In the first appendix, we study a related question, motivated by the work of Lusztig--Spaltenstein
\cite{LS}.

Let $\G$ and $\P$, $\M$ and $\U$ be as above, but over an arbitrary infinite field $F$.
To every unipotent conjugacy class $C\subset M$ we associate an $\Ad G$-invariant subset
$C_{P;G}:=\cup_{g\in G}g(C\cdot U)g^{-1}\subset G$. Then $C_{P;G}$ is a union of unipotent conjugacy classes in $G$.
A natural question is to what extent the set $C_{P;G}$  depends on $\P\supset\M$.

In their work, Lusztig and Spaltenstein \cite{LS} showed, using representation theory, that the Zariski closure of $C_{P;G}$ does not depend on $\P$. A simpler proof of this fact was given later by Lusztig \cite[Lem 10.3(a)]{Lu}. This result can be thought of as the assertion that $C_{P;G}$ is ``essentially"  independent of $\P$, if $F$ is algebraically closed.

We extend this result to an arbitrary $F$. Namely, for every algebraic variety
$\X$ over $F$ and a subset $A\subset X=\X(F)$ we define a ``saturation" $\sat(A)\subset X$. The main result of the appendix
asserts that the saturation $\sat(C_{P;G})\subset G$ does not depend on $\P$.

Since every Zariski closed subset is
saturated, our result is an extension of the theorem of Lusztig--Spaltenstein.
Similarly, when $F$ is a local field, our result implies that the closure of
$C_{P;G}$ in the analytic topology does not depend on $\P$.

The result in the appendix indicates that the independence from $\P$ of normalized parabolic induction has a purely algebraic flavour.
We also believe that the notion of saturation is interesting in its own right and deserves to be studied.

In the second appendix we give a proof of a group version of a theorem of Harish-Chandra, asserting that in
characteristic  zero, the span of regular semisimple orbital integrals is dense in $\wh{C}^{G}(G)$.
Though this very important result is considered to be well-known among specialists, only its Lie algebra analog \cite[Thm 3.1]{HC}
seems to appear in the literature. In particular, this fills a small gap in the argument of \cite[Thm B]{Ka2}.

Our paper is organized as follows. In Section 1 we describe general properties
of reductive groups and the so-called  generalized Grothendieck--Springer resolutions.
In Section 2 we study non-vanishing top degree differential forms, which are basic tools for this work.
Note that in these two sections the ground field $F$ is arbitrary, while starting from
 Section 3 the field $F$ is local non-archimedean.

In Section 3 we introduce smooth measures with compact
support and carry out our construction of the restriction $r_{P;M}^G$ and the induction $i_{P;M}^G$.
In Section 4 we show that the induction map sends a character of a representation to a character of the induced representation. Then in Section 5 we construct another restriction map $R_H^G$, defined
for a connected equal rank subgroup $\H$ of $\G$, and show that $R_H^G$ is compatible with $r_{P;M}^G$. Finally, in Section 6 we deduce from the results of Section 5 that the normalized parabolic induction is independent of $\P$ and preserves stability.

Notice that in Sections 1--5 we work with non-normalized induction, which has a purely geometric interpretation,
while we pass to normalized induction only in Section 6.

We thank Joseph Bernstein, Yuval Flicker and the referee for a number of helpful suggestions and corrections.

\section{Preliminaries on algebraic groups}

%Let $F$ be an arbitrary field. All algebraic varieties and morphisms of algebraic varieties will be over $F$.

\begin{Emp} \label{E:chev}
{\bf The Chevalley map.}
(a) Let $\G$ be a linear algebraic group over a field $F$, let $\c_\G:=\Spec F[\G]^\G$ be {\em the Chevalley space} of $\G$, and let $\nu_\G:\G\to \c_\G$ be the morphism dual to the inclusion
$F[\G]^\G\hra F[\G]$. %called {\em the Chevalley map}.

(b) A homomorphism of linear algebraic groups $\H\to\G$ induces a morphism $\pi_{\H,\G}:\c_\H\to\c_\G$ of the Chevalley spaces, making the following diagram commutative
\[
\begin{CD}
\H @>\nu_{\H}>>\c_\H\\
@VVV @VV\pi_{\H,\G} V\\
\G @>\nu_\G>> \c_\G.
\end{CD}
\]

(c) Let $\G$ be connected reductive and split, $\T\subset \G$  a maximal split torus, and
$W_{\G}=W_{\G,\T}$ the Weyl group of $\G$. Then the restriction $\nu_{\G}|_{\T}:\T\to\c_{\G}$ is surjective and induces an isomorphism
$W_{\G}\bs\T\isom \c_{\G}$ (see \cite[Cor 6.4]{St}).
\end{Emp}

\begin{Emp} \label{E:reg}
{\bf Notation.} Let $\G$ be as in \re{chev}, and let $\H\subset \G$ be a closed subgroup.

(a) Let $\la_{\G}:\G\to\B{G}_m$ be the homomorphism $g\mapsto\det\Ad g$ and let $\Dt_{\H,\G}\in F[\H]^\H=F[\c_\H]$ be the $\Ad \H$-invariant function  $h\mapsto\det(\Ad h^{-1}-1,\Lie \G/\Lie \H)$.

(b) Let $\H^{\reg/\G}\subset \H$ (resp. $\c_\H^{\reg/\G}\subset \c_\H$) be the open subscheme defined by the condition $\Dt_{\H,\G}\neq 0$. By construction, $\H^{\reg/\G}=\nu_\H^{-1}(\c_\H^{\reg/\G})$.

(c) Let $\K\subset \H$ be a closed subgroup. Then $\Dt_{\K,\G}=\Dt_{\K,\H}\cdot (\Dt_{\H,\G}|_\K)$, thus
$\K^{\reg/\G}=\K^{\reg/\H}\cap \H^{\reg/\G}$.
\end{Emp}

\begin{Emp}
{\bf Remark.} Notice that subset $\H^{\reg/\G}\subset\H$ should not be confused with
a more standard subset $\H^{\G-\reg}:=\H\cap \G^{\reg}\subset\H$.
\end{Emp}

\begin{Emp} \label{E:grspr}
{\bf The generalized Grothendieck--Springer resolution.}

(a) Let $\H$ (see \re{reg}) act on an algebraic variety $\X$ over $F$. We denote by $\Ind_\H^\G(\X):=\G\overset{\H}{\times}\X$ the quotient
of $\G\times \X$ by $\H$, acting by $h(g,x):=(gh^{-1},h(x))$. Then $\Ind_\H^\G(\X)$ is equipped with an action of $\G$,  given by $g'([g,h])=[g'g,h]$.

(b) Assume that $\H$ acts on itself  by conjugation, and set $\wt{\G}_\H:=\Ind_\H^\G(\H)$.
%and call it the (generalized) Grothendieck--Springer resolution.
Explicitly, $\wt{\G}_\H=\G\overset{\H,\Ad}{\times}\H$, where $\Ad$ indicates the adjoint action.

(c) We have a natural closed embedding  $(\pr_{\G/\H},a_{\H,\G}):\wt{\G}_\H\hra (\G/\H)\times \G$, defined by $[g,h]\mapsto ([g],ghg^{-1})$, whose image consists of all pairs  $([g],x)$ such that for every representative $g\in\G$  of $[g]$ we have $x\in \H_{g}:=g\H g^{-1}$. In particular, the projection $\pr_{\G/\H}:\wt{\G}_\H\to \G/\H$ is smooth, and for every $[g]\in \G/\H$, the map $a_{\H,\G}:\wt{\G}_\H\to\G$ identifies the
fiber $\pr_{\G/\H}^{-1}([g])$ with $\H_g$.
 \end{Emp}
%Let $\iota:\G/\H\to \wt{\G}_\H$ be the unit section $[g]\mapsto [g,1]$.

\begin{Emp} \label{E:simple}
{\bf Simple properties.}
(a) Every $\Ad \H$-invariant morphism $f:\H\to \X$ gives rise to an $\Ad \G$-equivariant morphism
$\wt{\G}_\H\to \X:[g,h]\mapsto f(h)$. In particular, $\la_\H$ gives rise to a morphism $\la_{\wt{\G}_\H}:\wt{\G}_\H\to\B{G}_m$, and $\Dt_{\H,\G}$ gives rise to a function $\Dt_{\wt{\G}_\H}:\wt{\G}_\H\to\B{A}^1$.

(b) By (a), the morphism $\nu_\H:\H\to \c_\H$ gives rise to a morphism $\nu_{\wt{\G}_\H}:\wt{\G}_\H\to \c_\H$. Moreover, by \re{chev}(b), we have a commutative diagram
\[
\begin{CD}
\wt{\G}_\H @>\nu_{\wt{\G}_\H}>>\c_\H\\
@Va_{\H,\G} VV @VV\pi_{\H,\G} V\\
\G @>\nu_\G>> \c_\G,
\end{CD}
\]
which induces a morphism $\wt{\G}_\H\to \G\times_{\c_\G}\c_\H$.

(c) Set  $\wt{\G}^{\reg}_\H:=\nu_{\wt{\G}_\H}^{-1}(\c^{\reg/\G}_\H)\subset\wt{\G}_\H$. Explicitly,
$\wt{\G}_\H^{\reg}=\Ind_\H^\G(\H^{\reg/\G})$. By definition, the morphism of (b) induces a morphism
$\iota_{\H,\G}:\wt{\G}^{\reg}_\H\to \G\times_{\c_\G}\c^{\reg/\G}_\H$.

(d) Let $\K\subset \H$ be a closed subgroup. Then the morphism
$a_{\K,\H}:\wt{\H}_\K\to \H$ induces a morphism $\wt{\G}_\K=\Ind_\H^\G(\wt{\H}_\K)\to \Ind_\H^\G(\H)=\wt{\G}_\H$, which we again denote by $a_{\K,\H}$.
Note that the composition $a_{\H,\G}\circ a_{\K,\H}:\wt{\G}_\K\to \wt{\G}_\H\to\G$ is equal to  $a_{\K,\G}$.  By \re{reg}(c), $a_{\K,\H}$ induces a morphism
$\wt{\G}^{\reg}_\K\to\wt{\G}^{\reg}_\H$.

(e) We also set $\wt{\H}_\K^{\reg/\G}:=\nu_{\wt{\H}_\K}^{-1}(\c_\K^{\reg/\G})\subset \wt{\H}_\K$.
\end{Emp}

%\begin{Emp}
%{\bf The simply connected case.}
%Assume that $\G$ is semisimple and simply connected. Then the homomorphism $\la_{\P}$ has a unique
%square-root $|\cdot|^{1/2}_{\P}:\P\to \B{G}_m$. Indeed, let $\U_\P\subset \P$ be the unipotent radical, and
%$M:=P/U_P$ be the reductive quotient of $\P$, then $|\cdot|_P$ corresponds to the character $2\rho_G-2\rho_{M}$, thus $|\cdot|_P$ corresponds to $\rho_G-\rho_{M}$, which is indeed a character of $\P$, since
%In particular, in this case we can consider  another differential form
%$\om^m_{\wt{G}}(v):=\Dt^{1/2}_P\om_{\wt{G}}^r$ on $\wt{G}$, where $(\cdot)^m$ stays for ``middle".
%Notice that $\om^m_{\wt{G}}(v)$ can be characterized as the unique extension of $\om(v)$, invariant under the map $g\mapsto g^{-1}$.
%\end{Emp}

\begin{Emp} \label{E:eqrk}
{\bf The equal rank case.}
(a) Let $\G$ be connected reductive, and let $\H\subset \G$ be a connected {\em equal rank} subgroup, by which we mean that a maximal torus $\T\subset \H$ is a maximal torus of $\G$. Let $\U=\U_\H\subset \H$ be the unipotent radical of $\H$, and fix a maximal torus $\T\subset \H$, defined over $F$.

(b) Assume that $\T$ is split. Then, by definition, an element $t\in \T$ belongs to $\H^{\reg/\G}$ if and only if $\al(t)\neq 0$ for every root $\al\in\Phi(\G,\T)\sm \Phi(\H,\T)$
\footnote{For every closed subgroup $\L\subset\G$ normalized by $\T$, we denote by $\Phi(\L,\T)\subset\Phi(\G,\T)$ the set of non-zero weights in
$\Lie L\subset\Lie G$.}. Equivalently, this happens if and only if the connected stabilizer $\Z_\G(t)^0$  equals $\Z_\H(t)^0$.

(c) If, in addition, the derived group of $\G$ is simply connected, then each stabilizer $\Z_\G(t)$ is connected. Thus for every $t\in \T\cap \H^{\reg/\G}$, we have $\Z_\G(t)=\Z_\H(t)\subset \H$.
\end{Emp}

\begin{Cl} \label{C:eqrk}
In the case of \re{eqrk} (a), the group $\H$ has a Levi subgroup (see \cite[11.22]{Bo}). Moreover, there exists a unique Levi subgroup $\M\subset\H$, containing $\T$.
\end{Cl}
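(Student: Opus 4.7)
The plan is to prove uniqueness first, and then deduce the existence of an $F$-rational Levi containing $\T$ from it via Galois descent, combining with the Levi decomposition from Borel $11.22$ already cited.

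The crucial consequence of the equal rank hypothesis is the identity $\Z_\H(\T) = \T$. Indeed, since $\T$ is a maximal torus of the reductive group $\G$, one has $\Z_\G(\T) = \T$, whence $\Z_\H(\T) = \H \cap \Z_\G(\T) = \H \cap \T = \T$; in particular, $\Z_\U(\T) = \T \cap \U = \{1\}$.

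For uniqueness (working over $\overline{F}$), suppose $\M_1, \M_2 \subset \H_{\overline{F}}$ are Levi subgroups both containing $\T_{\overline{F}}$. By the standard conjugacy of Levi decompositions, $\M_2 = u \M_1 u^{-1}$ for some $u \in \U(\overline{F})$. Then $u^{-1} \T u \subset \M_1$, and for every $t \in \T$ we can write $u^{-1} t u = t \cdot [t^{-1}, u^{-1}]$, where $[t^{-1}, u^{-1}] \in \U$ by normality of $\U$ in $\H$. Since both $u^{-1} t u$ and $t$ lie in $\M_1$, the commutator must lie in $\M_1 \cap \U = \{1\}$, so $u$ centralizes $\T$. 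Combined with $\Z_\U(\T) = \{1\}$, this forces $u = 1$ and hence $\M_1 = \M_2$.

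For existence over $F$: Borel $11.22$ produces a Levi $\M_0 \subset \H$ defined over $F$. A maximal torus of $\M_0$ is also a maximal torus of $\H$, so over $\overline{F}$ it is conjugate to $\T_{\overline{F}}$ by some $h \in \H(\overline{F})$; then $h \M_0 h^{-1}$ is a Levi of $\H_{\overline{F}}$ containing $\T_{\overline{F}}$. By the uniqueness just proved, this Levi is the only one over $\overline{F}$ containing $\T_{\overline{F}}$, hence is stable under $\Gal(\overline{F}/F)$ (as $\T$ is $F$-rational), and standard Galois descent shows it is the base change of an $F$-subgroup of $\H$. The main technical obstacle is merely confirming the Levi conjugacy statement over $\overline{F}$ and the Galois descent over a possibly imperfect $F$; both are routine once the geometric uniqueness, which is the substantive content and rests entirely on the equal rank identity $\Z_\U(\T)=\{1\}$, has been established.
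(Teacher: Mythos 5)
Your proof takes a genuinely different route from the paper's. The paper fixes a split $\T$, shows that $\H$ is generated by $\T$ and root subgroups, observes that $\Phi(\U,\T)$ is exactly the set of non-symmetric roots of $\H$, and concludes that any Levi $\M\supset\T$ must be the subgroup $\M'$ generated by $\T$ and the $\U_\al$ for the symmetric roots $\Phi'$; the Chevalley involution $\iota$ (with $\iota(\T)=\T$, $\iota(\U_\al)=\U_{-\al}$) is then used to verify that $\M'$ is indeed a Levi. Your proof instead invokes the conjugacy of Levi factors under $\U$ and reduces uniqueness to the clean identity $\Z_\U(\T)=\{1\}$ supplied by the equal rank hypothesis; your commutator computation and the Galois-descent reduction of existence to uniqueness are both correct as written.

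The gap lies precisely in the input you call routine: ``standard conjugacy of Levi decompositions.'' This is Mostow's theorem, valid in characteristic $0$; for an arbitrary connected linear algebraic group in positive characteristic, both the existence of a Levi factor and the conjugacy of Levi factors under the unipotent radical can fail, and neither is a standard fact. Sections~1--2 of the paper explicitly work over an arbitrary ground field, so appealing to Mostow is not available. Moreover, for the equal-rank groups in question, the most natural route to conjugacy is to first prove exactly the uniqueness you are trying to derive from it (any Levi $\M\supset\T$ has root system $\Phi(\H,\T)\smallsetminus\Phi(\U,\T)$, hence is unique, hence by maximal-torus conjugacy all Levis are conjugate), so the appeal risks circularity. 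The paper's root-theoretic argument is characteristic-free and self-contained precisely because it never needs the conjugacy statement. Your proof is a valid alternative in characteristic $0$, but as stated it does not cover the generality in which the Claim is asserted unless you supply a proof of Levi conjugacy for equal-rank subgroups over an algebraically closed field of arbitrary characteristic.
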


\begin{proof}
By uniqueness, we can extend scalars to a finite separable extension, thus assuming that
$\T$ is split. In this case, the subgroup $\H\subset\G$ is generated by $\T$ and the root subgroups $\U_{\al}\subset\G$ for $\al\in\Phi(\H,\T)$.
Indeed, this follows from the fact that $\H$ is generated by its Borel subgroups $\bb\supset\T$, and that the corresponding assertion for
solvable $\H$ follows from \cite[Prop 14.4]{Bo}.

Next we observe that the set $\Phi(\U,\T)$ consists of all $\al\in\Phi(\H,\T)$ such that $-\al\notin \Phi(\H,\T)$.
Therefore a Levi subgroup $\M\supset\T$ of $\H$ has to coincide with the subgroup $\M'\subset \G$ generated  by $\T$ and the roots subgroups $\U_{\al}\subset\G$ for all $\al\in\Phi':=\Phi(\H,\T)\sm \Phi(\U,\T)$.

It remains to show that the subgroup $\M'\subset \H$ defined in the previous paragraph is indeed a Levi subgroup. Since $\M'\U=\H$, it suffices to show that
$\Phi(\M',\T)\subset\Phi'$. Recall that $\G$ has an automorphism $\iota$  such that $\iota(\T)=\T$ and $\iota(\U_{\al})=\U_{-\al}$
for every $\al\in\Phi(\G,\T)$. Since the subset $\Phi'\subset\Phi(\G,\T)$ is stable under the map $\al\mapsto-\al$, we conclude that
$\M'\subset\H\cap\iota(\H)$, thus $\Phi(\M',\T)\subset\Phi(\H,\T)\cap \Phi(\iota(\H),\T)=\Phi'$.
\end{proof}

\begin{Emp}
{\bf Remark.}
In this work we only consider the case when $\H$ is either a parabolic subgroup of $\G$ or a connected centralizer of a semisimple element.
In these cases, \rcl{eqrk} is well-known (see \cite[Cor 14.19]{Bo}). On the other hand, we believe
that the context of equal rank subgroups is the ``correct framework" to work in.
\end{Emp}

\begin{Lem} \label{L:eqrk}
In the notation  of \rcl{eqrk}, let $p$ be the projection $\H\to \H/\U\cong \M$, and let $i$
 be the inclusion $\M\hra \H$.

(a) Set $\H':=p^{-1}(\M^{\reg/\G})\subset\H$.  Then the morphism $\Ind_\M^\H(\M^{\reg/\G})\to\H'$, induced by $a_{\M,\H}:\wt{\H}_\M\to\H$,
is an isomorphism.

(b)  The morphisms $\pi_{\H,\M}:\c_\H\to \c_\M$ and  $\pi_{\M,\H}:\c_\M\to \c_\H$, induced by $p$ and $i$ respectively (see \re{chev}(b)), are isomorphisms.

(c) The isomorphism $\pi_{\M,\H}:\c_\M\isom \c_\H$ induces an isomorphism $\c^{\reg/\G}_\M\isom \c^{\reg/\G}_\H$.

(d) We have the equality $p^{-1}(\M^{\reg/\G})=\H^{\reg/\G}$.

(e) The morphisms $\Ind_\M^\H(\M^{\reg/\G})\to \H^{\reg/\G}$ and $\wt{\G}^{\reg}_\M\to \wt{\G}^{\reg}_\H$ induced by $a_{\M,\H}$ (compare  \re{simple}(d)) are isomorphisms.

(f) Set $e_{\G}:=\nu_\G(1)\in\c_\G$ and similarly for $\H$. Then the morphism $\pi_{\H,\G}:\c_\H\to\c_\G$ satisfies $\pi_{\H,\G}^{-1}(e_\G)=e_{\H}$.
\end{Lem}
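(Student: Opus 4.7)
The plan is to prove parts (a)--(f) in order, with (a) the main technical step and the others following from it by largely formal arguments. Throughout, I rely on the $\Ad\M$-module decomposition $\Lie\G = \Lie\M\oplus\Lie\U\oplus\Lie\U^-$, where $\U^-$ denotes the opposite unipotent radical.

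For (a), the crux is the claim that for each $m\in\M^{\reg/\G}$, the morphism $\phi_m\colon\U\to\U$, $u\mapsto(\Ad m^{-1})(u)\cdot u^{-1}$, is an isomorphism of varieties. Granting this, the Levi decomposition $\H=\U\M$ identifies $\wt{\H}_\M^{\reg/\G}=\H\overset{\M}\times\M^{\reg/\G}$ with $\U\times\M^{\reg/\G}$ (every $\M$-orbit contains a unique representative of the form $(u,m)\in\U\times\M$), under which $a_{\M,\H}$ becomes $(u,m)\mapsto umu^{-1}=m\cdot\phi_m(u)\in\M^{\reg/\G}\cdot\U=\H'$, an isomorphism by the claim. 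To prove the claim I would proceed by d\'evissage along the lower central series $\U\supset[\U,\U]\supset\cdots$, each term $\Ad\M$-stable with abelian quotient; on each abelian layer, $\phi_m$ descends to a group endomorphism corresponding via $\exp$ to the linear map $\Ad m^{-1}-1$ on the associated graded piece of $\Lie\U$. The regularity condition $\Dt_{\M,\G}(m)\neq 0$ together with the $\M$-module splitting $\Lie\G/\Lie\M=\Lie\U\oplus\Lie\U^-$ forces $\Ad m^{-1}-1$ to be invertible on all of $\Lie\U$, hence on every graded piece, giving the desired iso.

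Part (b) follows from (a): on the dense open $\H'\subset\H$, every element is $\U$-conjugate to its $p$-image, so every $\Ad\H$-invariant regular function $f$ on $\H$ satisfies $f=f\circ p$ on $\H'$ and hence identically on $\H$ by irreducibility; combined with $p\circ i=\Id_\M$, this shows $\pi_{\H,\M}^*$ (pullback by $p$) and $\pi_{\M,\H}^*$ (restriction to $\M$) are mutually inverse isomorphisms $F[\c_\M]\leftrightarrow F[\c_\H]$. For (c)--(d), the $\Ad\M$-module filtration $\Lie\M\subset\Lie\H\subset\Lie\G$ yields the product formula $\Dt_{\M,\G}(m)=\Dt_{\H,\G}(m)\cdot\det(\Ad m^{-1}-1,\Lie\U)$ on $\M$. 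Since $\Lie\U$ and $\Lie\G/\Lie\H\cong\Lie\U^-$ carry opposite $\T$-weights, the two determinants vanish on the same subset of $\T$; extending by $\Ad\M$-invariance and Jordan decomposition (invariant determinants depend only on the semisimple part), the vanishing loci of $\Dt_{\M,\G}$ and $\Dt_{\H,\G}|_\M$ coincide on $\M$. This gives (c), and (d) follows by combining with (b): $\H^{\reg/\G}=\{\Dt_{\H,\G}\neq 0\}=p^{-1}\{\Dt_{\H,\G}|_\M\neq 0\}=p^{-1}(\M^{\reg/\G})$.

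Part (e) is immediate: the first iso is (a) combined with (d), and the second follows from the transitivity $\wt\G_\M\cong\G\overset{\H}\times\wt{\H}_\M$ by restricting to regular loci and applying (a)+(d). For (f), since $p$ is a group homomorphism it preserves Jordan decomposition and $\U$ has no nontrivial semisimple elements, so $h=mu\in\H$ is unipotent iff $p(h)=m$ is unipotent in $\M$; consequently $\nu_\G(h)=e_\G\iff h$ unipotent $\iff\nu_\M(p(h))=e_\M\iff\nu_\H(h)=e_\H$ (the last step via the iso of (b)), giving $\pi_{\H,\G}^{-1}(e_\G)=\{e_\H\}$. The main technical obstacle is the isomorphism $\phi_m\colon\U\to\U$ in (a)---specifically the upgrade from linear invertibility of $\Ad m^{-1}-1$ on $\Lie\U$ to a variety-level iso on the unipotent group $\U$ via d\'evissage; once this is in hand, all remaining parts are essentially bookkeeping using Levi structure and the opposite-weights symmetry of $\Lie\U$ versus $\Lie\U^-$.
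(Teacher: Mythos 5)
Your proposal follows the paper's proof closely in overall structure and is essentially correct, but there is a recurring inaccuracy worth flagging. In both (a) and (c) you invoke an isomorphism $\Lie\G/\Lie\H\cong\Lie\U^-$ (equivalently a splitting $\Lie\G/\Lie\M=\Lie\U\oplus\Lie\U^-$ with $\U^-$ carrying the opposite weights). This holds when $\H$ is parabolic, but the lemma is stated for an arbitrary connected equal-rank subgroup $\H$, and there it can fail: take $\G=\SL_3$, $\T$ the diagonal torus, and $\H$ the subgroup generated by $\T$, $\U_{\alpha_1}$, $\U_{\alpha_1+\alpha_2}$; then $\M=\T$, $\Phi(\U,\T)=\{\alpha_1,\alpha_1+\alpha_2\}$, while $\Phi(\G,\T)\sm\Phi(\H,\T)=\{-\alpha_1,\alpha_2,-\alpha_2,-\alpha_1-\alpha_2\}\supsetneq-\Phi(\U,\T)$.

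For (a) this is harmless: since $\Lie\U=\Lie\H/\Lie\M$ is an $\Ad\M$-invariant subspace of $\Lie\G/\Lie\M$, invertibility of $\Ad m^{-1}-\Id$ on the ambient space already forces invertibility on $\Lie\U$ with no splitting needed, which is exactly what the paper does. For (c), though, your assertion that the two factors $\det(\Ad\cdot^{-1}-1,\Lie\U)$ and $\Dt_{\H,\G}$ vanish on the same subset of $\T$ is false in the example above (the first vanishes on $\{\alpha_1=1\}\cup\{\alpha_1\alpha_2=1\}$, the second also on $\{\alpha_2=1\}$). The conclusion survives for a weaker reason: since $-\Phi(\U,\T)$ is always disjoint from $\Phi(\H,\T)$, the vanishing locus of the $\Lie\U$-factor on $\T$ is \emph{contained} in that of $\Dt_{\H,\G}$, so the product $\Dt_{\M,\G}$ has the same vanishing locus on $\T$ as $\Dt_{\H,\G}$. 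This is the content of the paper's appeal to the explicit root description in \re{eqrk}(b), which amounts to noticing that $\Phi(\G,\T)\sm\Phi(\M,\T)$ and $\Phi(\G,\T)\sm\Phi(\H,\T)$ have the same $\pm$-equivalence classes. Your treatments of (b), (e), and (f) are sound and match the paper's strategy; for (f) you argue pointwise via Jordan decomposition rather than reducing to the projection $W_\H\bs\T\to W_\G\bs\T$ as the paper does, but both work.
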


\begin{proof}
 (a) Since $\H=\U\rtimes \M$, it suffices to show that for every $m\in \M^{\reg/\G}$ the map $f_m:\U\to \U:u\mapsto m^{-1}umu^{-1}$ is an isomorphism.

Consider the upper (or lower) filtration  $\U^{(i)}$ of $\U$. It remains to show that $f_m$ induces an isomorphism on each quotient $\U^{(i)}/\U^{(i+1)}$. Using the natural isomorphism $\U^{(i)}/\U^{(i+1)}\cong \Lie \U^{(i)}/\U^{(i+1)}$, it suffices to show that $\Ad(m^{-1})-\Id$ is invertible on $\Lie \U^{(i)}/\U^{(i+1)}$.
Since $m\in \M^{\reg/\G}$, the map $\Ad(m^{-1})-\Id$ is invertible on $\Lie \G/\Lie \M$. Hence it is invertible on $\Lie \U=\Lie \H/\Lie \M$,
and the assertion follows.

(b) Since $p\circ i=\Id_{\M}$, it suffices to show that the pullback $i^*:F[\H]^{\H}\to F[\M]^\M$ is injective.
By (a), the induced map $i^*:F[\H']^{\H}\to F[\M^{\reg/\G}]^\M$ is injective. Since $\H'\subset\H$
is Zariski dense,  the restriction map $F[\H]\to F[\H']$ is injective, and the assertion follows.

(c) Extending scalars to a finite separable extension, we can assume that $\T$ is split. Then, by \re{chev}(c), it remains to show that $\T\cap \M^{\reg/\G}=\T\cap \H^{\reg/\G}$. But this follows from the explicit description of both sides, given in \re{eqrk}(b).

The assertion (d) follows from (c), while (e)  follows from (a) and (d).

(f)  By (b), we can replace $\H$ by $\M$, thus assuming that $\H$ is reductive. Extending scalars, we can assume that $\T$ is split. In this case, $\pi_{\H,\G}:\c_\H\to \c_\G$ is the projection $W_\H\bs \T\to W_\G\bs \T$ (see \re{chev}(c)), and the assertion is immediate.
\end{proof}

\begin{Lem} \label{L:eqrank}
In the case of \re{eqrk}(a), assume that the derived group of $\G$ is simply connected. Then

(a) the morphism $\pi_{\H,\G}:\c_\H^{\reg/\G}\to \c_\G$ is \'etale.

(b) the morphism   $\iota_{\H,\G}:\wt{\G}^{\reg}_\H\to \G\times_{\c_\G}\c^{\reg/\G}_\H$
from \re{simple}(c) is an isomorphism.
\end{Lem}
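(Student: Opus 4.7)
The plan begins by reducing to the case where $\H$ is reductive. By \rl{eqrk}(b), (c), and (e), the inclusion $\M \hra \H$ induces compatible isomorphisms $\c_\M \isom \c_\H$, $\c_\M^{\reg/\G} \isom \c_\H^{\reg/\G}$, and $\wt{\G}_\M^{\reg} \isom \wt{\G}_\H^{\reg}$ intertwining $\pi_{\cdot,\G}$ and $\iota_{\cdot,\G}$, so one may assume $\H$ is reductive and equal rank in $\G$. For (a), since étaleness descends along finite separable extensions, we may assume $\T$ is split, and \re{chev}(c) gives $\c_\G = W_\G \bs \T$, $\c_\H = W_\H \bs \T$. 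At any geometric point $t \in \T \cap \H^{\reg/\G}$, the simply-connected-derived hypothesis together with \re{eqrk}(b) yields $\Z_\G(t) = \Z_\H(t)$, so
\[
\Stab_{W_\G}(t) = W(\Z_\G(t), \T) = W(\Z_\H(t), \T) = \Stab_{W_\H}(t) =: W_0.
\]
The standard computation of strict henselizations of finite-group quotients then identifies both $\cal O^{sh}_{\c_\G,[t]_\G}$ and $\cal O^{sh}_{\c_\H,[t]_\H}$ canonically with $(\cal O^{sh}_{\T,t})^{W_0}$, and $\pi_{\H,\G}$ becomes the identity on this common ring; hence $\pi_{\H,\G}$ is étale at $[t]_\H$.

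For (b), part (a) shows $\pr_1: \G \times_{\c_\G} \c_\H^{\reg/\G} \to \G$ is étale by base change. As $a_{\H,\G} = \pr_1 \circ \iota_{\H,\G}$, étaleness of $\iota_{\H,\G}$ is equivalent to étaleness of $a_{\H,\G}$, which by $\G$-equivariance may be tested at points of the form $[1, h]$ with $h \in \H^{\reg/\G}$. A direct Lie-algebra computation identifies the differential of $a_{\H,\G}$ at $[1,h]$ with the map
\[
(\Lie \G \oplus \Lie \H)\big/\{(-Z, (1 - \Ad h)Z) : Z \in \Lie \H\} \longrightarrow \Lie \G, \qquad (X, Y) \longmapsto (1 - \Ad h)X + Y;
\]
both its bijectivity and the consistency of the displayed relation are immediate from the fact that $1 - \Ad h$ is invertible on $\Lie \G / \Lie \H$, which is precisely the defining condition $h \in \H^{\reg/\G}$.

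Finally, to upgrade the étale morphism $\iota_{\H,\G}$ to an isomorphism, one checks bijectivity on $\ov F$-points. For surjectivity, given $(x,c)$ and $t \in \T \cap \H^{\reg/\G}$ with $\nu_\H(t) = c$, the equality $\nu_\G(x_s) = \nu_\G(t)$ makes the semisimple $x_s$ a $\G$-conjugate of $t$ (semisimple classes are separated by $\c_\G$ for any connected reductive $\G$); after conjugating, $x_u \in \Z_\G(t) = \Z_\H(t) \subseteq \H$ by \re{eqrk}(b), so $x \in \H^{\reg/\G}$ and $\iota_{\H,\G}([1,x]) = (x,c)$. For injectivity, two preimages $[g_i, h_i]$ satisfy $h_1 \sim_\G h_2$ with the same $\nu_\H$; reducing to $(h_1)_s = (h_2)_s = t \in \T$, any $\G$-conjugator lies in $\Z_\G(t) = \Z_\H(t) \subseteq \H$, which first gives $h_1 \sim_\H h_2$ and then $[g_1, h_1] = [g_2, h_2]$ after a short bookkeeping. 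The decisive input at every step is the identification $\Z_\G(t) = \Z_\H(t)$ for $t \in \T \cap \H^{\reg/\G}$ from \re{eqrk}(b), which forces Weyl-group stabilizers to match in (a) and confines unipotent parts and conjugators to $\H$ in (b); this is where the simply-connected-derived hypothesis is genuinely used, and the main obstacle is ensuring that this single input really does suffice to drive both parts uniformly.
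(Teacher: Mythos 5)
Your proof is correct and follows essentially the same route as the paper: reduce to $\H$ reductive and $\T$ split, obtain \'etaleness of $\pi_{\H,\G}$ from the equality of Weyl-group stabilizers forced by $\Z_\G(t)=\Z_\H(t)$, deduce \'etaleness of $\iota_{\H,\G}$ since both its source and target are \'etale over $\G$, and check bijectivity on $\ov F$-points via Jordan decomposition and Steinberg's results. The only cosmetic deviation is that you verify \'etaleness of $a_{\H,\G}$ over the regular locus directly at the Lie-algebra level, where the paper instead cites its \rco{etale} (a consequence of the differential-form identity in \rl{meas}).
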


\begin{proof}
Extending scalars to a finite separable extension of $F$, we can assume that $\T$ is split.

(a) By \rl{eqrk}(c), we can replace $\H$ by its Levi subgroup $\M$, thus we can assume that $\H$ is reductive.
Then $\pi_{\H,\G}:\c_\H\to \c_\G$ is simply the projection $W_\H\bs \T\to W_\G\bs \T$ (see \re{chev}(c)). Thus it remains to show that for every $t\in \T\cap \H^{\reg/\G}$, we have the equality
of stabilizers $\Stab_{W_\H}(t)= \Stab_{W_\G}(t)$. But our assumption on $\G$ implies that $\Z_\H(t)=\Z_\G(t)$ (see \re{eqrk}(c)),
so the assertion follows.

(b) Note that $\wt{\G}^{\reg}_\H$ is \'etale over $\G$ by \rco{etale} below, while $\G\times_{\c_\G}\c^{\reg/\G}_\H$ is \'etale over $\G$ by (a).
Hence $\iota_{\H,\G}$ is \'etale. Thus, in order to show that  $\iota_{\H,\G}$ is an isomorphism, it suffices to show that $\iota_{\H,\G}$ is
a bijection (on $\ov{F}$-points).

First we show that $\iota_{\H,\G}$ is surjective. Since $\nu_\H|_\T:\T\to \c_\H$ is surjective,
every element of $\G\times_{\c_\G}\c^{\reg/\G}_\H$ has a form $(g,\nu_\H(t))$ for some $g\in \G$ and $t\in \T\cap \H^{\reg/\G}$ such that
$\nu_\G(g)=\nu_\G(t)$. Let $g=su$ be the Jordan decomposition. Then $\nu_\G(s)=\nu_\G(g)=\nu_\G(t)$ (see \cite[Cor 6.5]{St}). Since $s$ and $t$ are semisimple, they are $\G$-conjugate (by \cite[Cor 6.6]{St}). Since $\iota_{\G,\H}$ is $\G$-equivariant, we can replace $g$ by its conjugate, thus assuming that $s=t\in\T\cap \H^{\reg/\G}$.

Since $us=su$, we get that $u\in \Z_\G(s)\subset\H$ (by \re{eqrk}(c)). Hence $g=su\in\H$, and $\nu_\H(g)=\nu_\H(s)\in \c_\H^{\reg/\G}$.
Thus $g\in \H^{\reg/\G}$, and $\iota_{\H,\G}([1,g])=(g,\nu_\H(t))$.

To show the injectivity, assume that two elements $\wt{g}=[g,h]$ and $\wt{g}'=[g',h']$ of $\wt{\G}_\H^{\reg}$ satisfy $\iota_{\H,\G}(\wt{g})=\iota_{\H,\G}(\wt{g}')$. Since $\iota_{\G,\H}$ is $\G$-equivariant, we can replace $\wt{g}$ and $\wt{g}'$ by their  $g'^{-1}$-conjugates, thus assuming that $g'=1$. In this case, the identity $\iota_{\H,\G}(\wt{g})=\iota_{\H,\G}(\wt{g}')$ implies that  $\nu_\H(h)=\nu_\H(h')$ and $ghg^{-1}=h'$. It suffices to show that $g\in \H$, hence $\wt{g}=[1,ghg^{-1}]=\wt{g}'$.

Let $h=su$ and $h'=s'u'$ be the Jordan decompositions. Then $gsg^{-1}=s'$ and $\nu_\H(s)=\nu_\H(s')$. Thus $s$ and $s'$ are $\H$-conjugates.
Hence, replacing $(g,h)$ by $(gx^{-1},xhx^{-1})$ for some $x\in \H$, we may assume that $s=s'$, thus $g\in \Z_\G(s)$.
Since $h\in \H^{\reg/\G}$, we get $s\in \H^{\reg/\G}$. By  \re{eqrk}(c), we conclude that $\Z_\G(s)\subset \H$, thus
$g\in \H$.
\end{proof}

\begin{Cor} \label{C:eqrank}
In the case of \re{eqrk}, the morphism   $\iota_{\H,\G}:\wt{\G}^{\reg}_\H\to \G\times_{\c_\G}\c^{\reg/\G}_\H$
from \re{simple}(c) is finite and surjective.
\end{Cor}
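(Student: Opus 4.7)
The plan is to deduce the corollary from \rl{eqrank} by reducing to the case of simply connected derived group via a $z$-extension; surjectivity will be handled by inspecting the proof of \rl{eqrank}(b), and finiteness by faithfully flat descent along the extension.

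For surjectivity I would point out that the surjectivity argument in \rl{eqrank}(b) uses simple connectedness only to conclude $\Z_\G(t)\subset\H$ in the case $g=tu$ with $t\in\T\cap\H^{\reg/\G}$ and $u\in\Z_\G(t)$ unipotent. Since the unipotent $u$ already lies in the connected centralizer $\Z_\G(t)^0=\Z_\H(t)^0\subset\H$ by the first half of \re{eqrk}(b) (which requires no hypothesis on the derived group), the same argument yields $g\in\H^{\reg/\G}$ and $\iota_{\H,\G}([1,g])=(g,\nu_\H(t))$ in the present setting, proving surjectivity.

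For finiteness I would choose a $z$-extension $\G'\twoheadrightarrow\G$ with central torus kernel $\Z$ and with $\G'^{\der}$ simply connected, and set $\H':=\H\times_\G\G'\subset\G'$. Then $\H'$ is a connected equal rank subgroup of $\G'$: a maximal torus of $\H$ lifts to a torus that is maximal in both $\H'$ and $\G'$. The key step, which I expect to be the main obstacle, is to verify that the square
\[
\begin{CD}
\wt{\G'}^{\reg}_{\H'} @>>> \wt{\G}^{\reg}_{\H}\\
@Va_{\H',\G'}VV @VVa_{\H,\G}V\\
\G' @>>> \G
\end{CD}
\]
is Cartesian. To construct the inverse of the natural map, given $(g_0',[g,h])\in\G'\times_\G\wt{\G}^{\reg}_{\H}$ with $\overline{g_0'}=ghg^{-1}$, I pick any lift $g''\in\G'$ of $g$ and set $h'':=g''^{-1}g_0'g''$, observing that $\overline{h''}=h\in\H^{\reg/\G}$ so that $h''\in\H'^{\reg/\G'}$; centrality of $\Z$ in $\G'$ makes the class $[g'',h'']\in\wt{\G'}^{\reg}_{\H'}$ independent of the choice of lift, and a parallel verification handles the residual $\H'$-action on $\wt{\G'}_{\H'}=\G'\times^{\H'}\H'$ and the choice of representative of $[g,h]$.

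Granting the Cartesian square, \rl{eqrank} applied to $(\G',\H')$ makes $\iota_{\H',\G'}$ an isomorphism, so $a_{\H',\G'}|_{\wt{\G'}^{\reg}_{\H'}}$ is finite as a base change of the finite morphism $\c_{\H'}^{\reg/\G'}\to\c_{\G'}$. Since finiteness is fpqc local on the target, descent along the $\Z$-torsor $\G'\to\G$ then gives that $a_{\H,\G}|_{\wt{\G}^{\reg}_{\H}}:\wt{\G}^{\reg}_{\H}\to\G$ is finite as well. Combined with the finiteness of $\G\times_{\c_\G}\c^{\reg/\G}_\H\to\G$ (a base change of the finite $\c^{\reg/\G}_\H\to\c_\G$), this makes $\iota_{\H,\G}$ a morphism between two $\G$-schemes finite over $\G$, which is automatically finite: factor it through its graph $\wt{\G}^{\reg}_{\H}\to\wt{\G}^{\reg}_{\H}\times_\G(\G\times_{\c_\G}\c^{\reg/\G}_\H)$ (a closed immersion, since the target is $\G$-separated) composed with the second projection (finite as a base change of $a_{\H,\G}|_{\wt{\G}^{\reg}_{\H}}$).
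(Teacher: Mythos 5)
Your surjectivity argument is correct and in fact cleaner than the paper's: you are right that in the surjectivity half of the proof of \rl{eqrank}(b), after reducing to $s=t\in\T\cap\H^{\reg/\G}$ one only needs $u\in\Z_\G(t)^0=\Z_\H(t)^0\subset\H$, and that equality of connected centralizers comes from \re{eqrk}(b) without any simple connectedness hypothesis. The paper instead deduces surjectivity of $\iota_{\H,\G}$ from the simply connected case by the same commutative square it uses for finiteness, so your direct argument is a pleasant shortcut.

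The finiteness half of your proposal, however, has a genuine gap. You assert that $\c_{\H'}^{\reg/\G'}\to\c_{\G'}$ is finite and use it to conclude that $a_{\H',\G'}|_{\wt{\G'}^{\reg}_{\H'}}$ (and hence, by descent, $a_{\H,\G}|_{\wt{\G}^{\reg}_{\H}}$) is finite. This is false: $\c^{\reg/\G}_\H\to\c_\G$ is only \'etale (\rl{eqrank}(a)), and in general it is not proper. For example, take $\G=\SL_2$ and $\H=\T$ the diagonal torus; then $\c_\T\cong\B{G}_m$, $\c_\G\cong\B{A}^1$ via $t\mapsto t+t^{-1}$, and $\c_\T^{\reg/\G}=\T\smallsetminus\{\pm 1\}$ maps onto the open non-closed subset $\B{A}^1\smallsetminus\{\pm 2\}$. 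Correspondingly, $a_{\T,\G}|_{\wt{\G}^{\reg}_\T}$ has image the open non-closed locus $\{g:\Tr g\neq\pm 2\}$ in $\SL_2$, so it is not proper, let alone finite. With neither source nor target finite over $\G$, your closing graph-trick argument cannot be run. The Cartesian square you describe is fine, but it descends the wrong property: it shows $a_{\H,\G}|_{\wt{\G}^{\reg}_\H}$ inherits whatever $a_{\H',\G'}|_{\wt{\G'}^{\reg}_{\H'}}$ has, and the latter is not finite.

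The paper avoids this trap by working with $\iota_{\H,\G}$ directly rather than with $a_{\H,\G}$. It takes the isogeny $\pi:\G^{\sc}\times\Z(\G)^0\to\G$, sets $\H':=\pi^{-1}(\H)$, and forms the \emph{merely commutative} square with $\iota_{\H',\G'}$ on top (an isomorphism by \rl{eqrank}), $\iota_{\H,\G}$ on the bottom, and finite surjective vertical arrows. Then $\iota_{\H,\G}\circ(\text{left vertical})=(\text{right vertical})\circ\iota_{\H',\G'}$ is finite surjective; since the left vertical is surjective and proper, one deduces that $\iota_{\H,\G}$ is proper, quasi-finite, and surjective, hence finite and surjective. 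If you want to keep your strategy, you would need to establish a Cartesian square whose \emph{base} is the map of fiber products $\G'\times_{\c_{\G'}}\c^{\reg/\G'}_{\H'}\to\G\times_{\c_\G}\c^{\reg/\G}_\H$ (not $\G'\to\G$) and whose top is $\iota_{\H',\G'}$, together with the flatness needed for fpqc descent along that base; the paper's ``sandwich'' argument sidesteps these issues.
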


\begin{proof}
Let $\G^{\sc}$ be the simply connected covering of the derived group of $\G$. Consider the natural isogeny
$\pi:\G':=\G^{\sc}\times \Z(\G)^0\to \G$, and set $\H':=\pi^{-1}(\H)\subset \G'$. Then we have a commutative diagram
\[
\begin{CD}
\wt{\G'}^{\reg}_{\H'} @>\iota_{\H',\G'}>> \G'\times_{\c_{\G'}}\c^{\reg/{\G'}}_{\H'}\\
@VVV @VVV \\
\wt{\G}^{\reg}_\H @>\iota_{\H,\G}>> \G\times_{\c_\G}\c^{\reg/\G}_\H,
\end{CD}
\]
where vertical arrows are finite surjective morphisms, induced by $\pi$. Now, $\iota_{\H',\G'}$ is an isomorphism by \rl{eqrank}.
Therefore $\iota_{\H,\G}$ is finite and surjective.
\end{proof}

\begin{Emp}
{\bf Remark.} Although the morphism $\iota_{\H,\G}$ from \rco{eqrank} is not an isomorphism in general, it is an isomorphism over a ``strongly regular locus".
Moreover, the whole morphism $\iota_{\H,\G}$ ``can be made an isomorphism", if one replaces the (singular) Chevalley spaces $\c_\G$ and $\c_\H$ by their smooth Artin stack versions. %$[\Ker\pi\bs \c_{\G'}]$  and $[\Ker\pi\bs \c_{\H'}]$.
%(b) The map $\iota_{\H,\G}$ is proper in much greater generality.
\end{Emp}

\section{Top degree differential forms}

\begin{Emp} \label{E:Om}
{\bf Notation.} (a) For every smooth algebraic (or analytic) variety $\X$ over $F$ of dimension $d$, we denote by
$\C{K}_\X=\Om^{d}_\X$ the canonical bundle on $\X$, that is, the line bundle of top degree differential forms on $\X$.

(b) For every map of smooth algebraic (or analytic) varieties $f:\X\to \Y$ over $F$ of dimension $d$, the pullback map of
differential forms gives rise to a morphism of line bundles $i_f:f^*(\C{K}_\Y)\to \C{K}_\X$.

\end{Emp}

\begin{Emp} \label{E:group}
{\bf The group case}. Let $\G$ be an algebraic group over $F$.

(a) For each $g\in \G$, the left multiplication map $L_g:\G\to \G:x\mapsto gx$ induces an isomorphism
$\Lie \G=T_1(\G)\isom T_g(\G)$ of tangent spaces. These isomorphisms for all $g$ induce a trivialization $\G\times \Lie \G\isom T(\G)$ of the tangent bundle of $\G$.

(b) Consider the one-dimensional vector space $V_\G:=\det(\Lie \G)^*$ over $F$. Then the trivialization from (a) induces an isomorphism $i:\C{O}_\G\otimes_F V_\G\isom \C{K}_\G$ of line bundles.
Explicitly, for every $v\in V_\G$ the differential form $i(v)$ is the unique
left-invariant differential form $\om_\G(v)=\om^l_\G(v)$ on $\G$ such that $\om^l_\G(v)|_{g=1}=v$. Moreover,
$\om^l_\G(v)$ is nonvanishing if $v\neq 0$.

(c) Similarly,  for every $v\in V_\G$ there exists a unique right-invariant
differential form $\om^r_\G(v)$ on $\G$ such that $\om^r_\G(v)|_{g=1}=v$. Explicitly, $\om_{\G}^r(v)=\la_\G\cdot\om_{\G}^l(v)$, where $\la_\G$ was defined in  \re{reg}(a). Observe that if $\G$ is connected reductive, then the character $\la_\G$ is trivial, thus $\om_{\G}^r(v)=\om_{\G}^l(v)$.
\end{Emp}

\begin{Emp} \label{E:canbun}
{\bf The canonical bundle on $\wt{\G}_\H$.}

(a) For every $[g]\in \G/\H$ we have natural identifications $T_{[g]}(\G/\H)\isom\Lie \G/\Lie \H_g$, and
$\pr_{\G/\H}^{-1}([g])\isom \H_g:[g,h]\mapsto ghg^{-1}$ (see \re{grspr}(c)). Thus, for every $[g,h]\in \wt{\G}_\H$, we have an exact sequence
\[
0\to T_{ghg^{-1}}(\H_g)\to T_{[g,h]}(\wt{\G}_\H)\to \Lie \G/\Lie \H_g\to 0.
\]

(b) Using the identification $\Lie \H_g\isom T_{ghg^{-1}}(\H_g)$, induced by $L_{ghg^{-1}}$  as in \re{group}(a), we have a natural isomorphism
\[
\det T_{[g,h]}(\wt{\G}_\H)\cong \det\Lie \H_g\otimes\det(\Lie \G/\Lie \H_g)\cong\det\Lie \G.
\]

(c) By (b), we have a natural isomorphism $i:\C{O}_{\wt{\G}_\H}\otimes_F V_\G\isom \C{K}_{\wt{\G}_\H}$. In particular,
every $0\neq v\in V_\G$ defines a nonvanishing differential form $\om_{\wt{\G}_\H}(v):=i(v)$ on $\wt{\G}_\H$.
%Moreover, $\om_{\wt{\G}_\H}(v)$ has no zeros, if $v\neq 0$.

(d) Let $a_{\H,\G}:\wt{\G}_\H\to\G$ be as in \re{grspr}(c). Using (c) and the isomorphism $\C{K}_\G\isom \C{O}_\G\otimes_F V_\G$ from \re{group}(b), we get a natural isomorphism
\[
\wt{i}:a_{\H,\G}^*(\C{K}_\G)\isom a_{\H,\G}^*(\C{O}_\G\otimes_F V_\G)=\C{O}_{\wt{\G}_\H}\otimes_F V_\G\isom\C{K}_{\wt{\G}_\H}.
\]
\end{Emp}

\begin{Emp} \label{E:right}
{\bf Remarks.}
(a) Recall that the isomorphism $\Lie \H_g\isom T_{ghg^{-1}}(\H_g)$ was constructed in \re{canbun}(b) using the left multiplication.
Therefore the differential form  $\om_{\wt{\G}_\H}(v)$ from \re{canbun}(c) is left-invariant, that is, the restriction of
$\om_{\wt{\G}_\H}(v)$ to each $\pr_{\G/\H}^{-1}([g])\cong \H_g$ is left-invariant.

(b) Instead, we could construct an isomorphism $\Lie \H_g\isom T_{ghg^{-1}}(\H_g)$, using the right multiplication.
As a result,  for every $v\in V_\G$ we would get a right-invariant differential form $\om^r_{\wt{\G}_\H}(v)$ on $\wt{\G}_\H$
such that $\om^r_{\wt{\G}_{\H}}(v)=\la_{\wt{\G}_\H}\cdot\om_{\wt{\G}_\H}(v)$ (compare \re{simple}(a) and \re{group}(c)).

(c) Each differential form $\om_{\wt{\G}_\H}(v)$ is $\G$-invariant. Indeed, it suffices to show that for every $g\in\G$ and $h\in\H$ the
following diagram is commutative:
\[
\begin{CD}
\Lie \H @>L_h>> T_{h}(\H)\\
@V\Ad g VV @VV\Ad g V\\
\Lie \H_g @>L_{ghg^{-1}}>> T_{ghg^{-1}}(\H_g).
\end{CD}
\]
But this follows from the identity $g(hx)g^{-1}=(ghg^{-1})(gxg^{-1})$.
\end{Emp}

%\begin{Emp}
%{\bf The simply connected case.}
%Assume that $\G$ is semisimple and simply connected. Then the homomorphism $\la_{\P}$ has a unique
%square-root $|\cdot|^{1/2}_{\P}:\P\to \B{G}_m$. Indeed, let $U_P\subset P$ be the unipotent radical, and
%$M:=P/U_P$ be the reductive quotient of $\P$, then $|\cdot|_P$ corresponds to the character $2\rho_G-2\rho_{M}$, thus $|\cdot|_P$ corresponds to $\rho_G-\rho_{M}$, which is indeed a character of $\P$, since
%In particular, in this case we can consider  another differential form
%$\om^m_{\wt{G}}(v):=\Dt^{1/2}_P\om_{\wt{G}}^r$ on $\wt{G}$, where $(\cdot)^m$ stays for ``middle".
%Notice that $\om^m_{\wt{G}}(v)$ can be characterized as the unique extension of $\om(v)$, invariant under the map $g\mapsto g^{-1}$.
%\end{Emp}

\begin{Lem} \label{L:meas}
The differential form $a_{\H,\G}^*(\om_\G(v))$ on $\wt{\G}_\H$ is equal to $\Dt_{\wt{\G}_\H}\cdot \om_{\wt{\G}_\H}(v)$, where $\Dt_{\wt{\G}_\H}$ was defined in
\re{simple}(a).
\end{Lem}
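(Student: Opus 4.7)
The plan is to verify the identity pointwise at an arbitrary $[g,h] \in \wt{\G}_\H$, after trivializing both sides against $V_\G = (\det \Lie \G)^*$ via the identifications of \re{group}(b) and \re{canbun}(c). Since $\om_\G(v)|_{ghg^{-1}}$ corresponds to $v \in V_\G$ by left-invariance of $\om_\G(v)$, and $\om_{\wt{\G}_\H}(v)|_{[g,h]}$ corresponds to $v$ by construction, the task reduces to showing that the Jacobian of
\[
d a_{\H,\G}|_{[g,h]}\colon T_{[g,h]}(\wt{\G}_\H)\to T_{ghg^{-1}}(\G),
\]
computed with respect to the identifications of the two top exterior powers with $\det \Lie \G$, equals $\Dt_{\wt{\G}_\H}([g,h]) = \Dt_{\H,\G}(h)$.

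I would use the short exact sequence $0 \to T_{ghg^{-1}}(\H_g) \to T_{[g,h]}(\wt{\G}_\H) \to T_{[g]}(\G/\H) \to 0$ from \re{canbun}(a) and compute $d a_{\H,\G}$ separately on the two graded pieces. Since $a_{\H,\G}$ restricted to $\pr_{\G/\H}^{-1}([g])\cong\H_g$ is just the inclusion $\H_g\hookrightarrow\G$, the fiber part, under the left-translation identifications of $T_{ghg^{-1}}(\H_g)$ and $T_{ghg^{-1}}(\G)$, becomes the inclusion $\Lie \H_g\hookrightarrow\Lie \G$. For the quotient, I would represent a tangent vector $Y\in\Lie \G/\Lie \H_g$ at $[g]$ by the path $\exp(tY)g$ in $\G$ (compatible with the identification $T_{[g]}(\G/\H)\cong\Lie \G/\Lie \H_g$ via $\G/\H\isom\G/\H_g$, $[g']\mapsto[g'g^{-1}]$), lift it to $[\exp(tY)g, h]$ in $\wt{\G}_\H$, and apply $a_{\H,\G}$ to obtain the path $\exp(tY)\cdot ghg^{-1}\cdot\exp(-tY)$ in $\G$. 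Differentiating at $t=0$ and left-translating by $(ghg^{-1})^{-1}$ yields the vector $(\Ad((ghg^{-1})^{-1}) - 1)Y \in \Lie \G$; this descends to a well-defined endomorphism of $\Lie \G/\Lie \H_g$, since $(ghg^{-1})^{-1}\in\H_g$ so that $\Ad((ghg^{-1})^{-1})$ preserves $\Lie \H_g$.

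Combining the two pieces through the short exact sequence, the Jacobian of $d a_{\H,\G}|_{[g,h]}$ with respect to the identifications $\det T_{[g,h]}(\wt{\G}_\H) \cong \det \Lie \G$ from \re{canbun}(b) and $\det T_{ghg^{-1}}(\G) \cong \det \Lie \G$ from \re{group}(b) is $\det\bigl(\Ad((ghg^{-1})^{-1}) - 1,\Lie \G/\Lie \H_g\bigr)$, since the fiber contribution is that of the inclusion. The automorphism $\Ad(g)$ identifies the pair $(\Lie \G/\Lie \H, \Ad(h^{-1}) - 1)$ with $(\Lie \G/\Lie \H_g, \Ad((ghg^{-1})^{-1}) - 1)$, so this determinant equals $\det(\Ad(h^{-1}) - 1,\Lie \G/\Lie \H) = \Dt_{\H,\G}(h) = \Dt_{\wt{\G}_\H}([g,h])$ by \re{reg}(a) and \re{simple}(a); combined with the first paragraph this gives the claimed equality at $[g,h]$, and hence everywhere. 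The main point to watch is the less standard identification $T_{[g]}(\G/\H) \cong \Lie \G/\Lie \H_g$ (rather than $\Lie \G/\Lie \H$) used in \re{canbun}(a), and ensuring that the path convention chosen on $\G/\H$ is consistent with it.
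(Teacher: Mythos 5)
Your proof is correct and takes essentially the same route as the paper: identify both sides against $V_\G$, reduce to a Jacobian computation, split via the short exact sequence from \re{canbun}(a), observe the fiber piece is the inclusion, and compute the quotient piece by differentiating the conjugation action. The only difference is that the paper first invokes $\G$-invariance of both forms (\re{right}(c)) to reduce to $g=1$, which lets it skip the path $\exp(tY)g$, the identification $T_{[g]}(\G/\H)\cong\Lie\G/\Lie\H_g$, and the final $\Ad(g)$-conjugation step you carry out by hand; both versions are sound, but the equivariance shortcut is worth internalizing as it trims the bookkeeping noticeably.
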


\begin{proof}
Identifying all fibers of $\om_\G(v)$ and $\om_{\wt{\G}_\H}(v)$ with $v$ as in \re{group}(b) and \re{canbun}(b) respectively, we have to show that
the Jacobian of the map $a_{\H,\G}:\wt{\G}_\H\to \G$ at $[g,h]$ is $\Dt_{\H,\G}(h)$.  By $\G$-equivariance, we can assume that $g=1$.

Using  \re{canbun}(a), we have two exact sequences of tangent spaces:
\[
0\to \Lie \H\to T_{[1,h]}(\wt{\G}_\H)\to \Lie \G/\Lie \H\to 0
\]
\[
0\to \Lie \H\to T_{h}(\G)=\Lie \G\to \Lie \G/\Lie \H\to 0.
\]
It suffices to show that the differential ${da_{\H,\G}|}_{[1,h]}:T_{[1,h]}(\wt{\G}_\H)\to T_{h}(\G)$ induces the identity on $\Lie \H$ and
the map $\Ad h^{-1}-\Id$ on $\Lie \G/\Lie \H$.

For the first assertion, notice that the restriction of $a_{\H,\G}$ to $\pr_{\G/\H}^{-1}([1])$ is
the inclusion $\H\hra \G$. For the second, notice that the endomorphism of $\Lie \G/\Lie \H$ induced by ${da_{\H,\G}|}_{[1,h]}$, is induced by the differential of the map $\G\to \G:g\mapsto ghg^{-1}$ at $g=1$.
Since $\om_\G(v)$ is left-invariant, the last differential coincides with the differential of the map $\G\to \G:g\mapsto (h^{-1}gh)g^{-1}$, which equals $\Ad h^{-1}-\Id$.
\end{proof}

\begin{Cor} \label{C:etale}
The open subvariety $\wt{\G}_\H^{\reg}\subset \wt{\G}_\H$ is the largest open subvariety, where the map $a_{\H,\G}:\wt{\G}_\H\to \G$ is \'etale.
\end{Cor}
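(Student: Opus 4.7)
My plan is to deduce the corollary directly from \rl{meas} together with the standard characterization of the \'etale locus of a morphism between smooth varieties of equal dimension via nonvanishing pullback of top-degree forms.

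First I would observe that both $\wt{\G}_\H$ and $\G$ are smooth of dimension $\dim\G$: for $\G$ this is automatic (linear algebraic groups are smooth), and for $\wt{\G}_\H = \G\overset{\H,\Ad}{\times}\H$ this follows from \re{grspr}(c), which presents $\wt{\G}_\H$ as a Zariski-locally trivial $\H$-bundle over $\G/\H$. Since source and target have the same dimension, a standard fact from algebraic geometry says that $a_{\H,\G}$ is \'etale at a point $x\in\wt{\G}_\H$ if and only if the differential $da_{\H,\G}|_x$ is an isomorphism of tangent spaces, and that the set of such $x$ is open. Equivalently, picking any nonvanishing local section $\om$ of $\C{K}_\G$ near $a_{\H,\G}(x)$, the map $a_{\H,\G}$ is \'etale at $x$ if and only if the pulled-back top form $a_{\H,\G}^*(\om)$ does not vanish at $x$.

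Now I would apply this criterion with $\om=\om_\G(v)$ for a fixed $0\neq v\in V_\G$, which is a nowhere-vanishing global section of $\C{K}_\G$ by \re{group}(b). By \rl{meas}, we have the identity
\[
a_{\H,\G}^*(\om_\G(v)) \;=\; \Dt_{\wt{\G}_\H}\cdot\om_{\wt{\G}_\H}(v)
\]
on $\wt{\G}_\H$, and the form $\om_{\wt{\G}_\H}(v)$ is everywhere nonvanishing by \re{canbun}(c). Therefore the locus on which $a_{\H,\G}^*(\om_\G(v))$ is nonvanishing coincides with the non-vanishing locus of $\Dt_{\wt{\G}_\H}$, which by \re{simple}(a) and \re{simple}(c) is exactly $\wt{\G}_\H^{\reg}=\nu_{\wt{\G}_\H}^{-1}(\c_\H^{\reg/\G})$. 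Combining with the \'etale criterion from the previous paragraph gives the claim.

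The only real content beyond \rl{meas} is the basic differential-geometric/algebraic fact that, for a morphism of smooth varieties of equal dimension, the \'etale locus equals the locus where a nonvanishing top form pulls back to a nonvanishing top form; I expect this to be the only potential point requiring a citation (e.g. to EGA IV or the standard ``Jacobian criterion for \'etaleness''), as all other ingredients are already packaged in \rl{meas} and in the definitions of \re{simple} and \re{canbun}.
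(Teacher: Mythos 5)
Your proof is correct and is exactly the argument the paper intends: the result is stated as a corollary of \rl{meas} with no separate proof precisely because the identity $a_{\H,\G}^*(\om_\G(v))=\Dt_{\wt{\G}_\H}\cdot\om_{\wt{\G}_\H}(v)$, combined with the nonvanishing of $\om_{\wt{\G}_\H}(v)$ and the Jacobian criterion for \'etaleness between smooth equidimensional varieties, immediately identifies the \'etale locus with $\{\Dt_{\wt{\G}_\H}\neq 0\}=\wt{\G}_\H^{\reg}$.
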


\begin{Emp} \label{E:parab}
{\bf The parabolic case.}  (a) Let $\G$ be connected reductive, $\P\subset \G$ a parabolic subgroup, $\M\subset \P$ a Levi subgroup, and
$\U\subset\P$ the unipotent radical.

(b) Let $\P^-$ be the opposite parabolic of $\P$, and let $\U^-\subset\P^-$ be the unipotent radical.
Then the multiplication map $m:\U^-\times \P\to \G$ is an open embedding. Hence
the maps $\U^-\to \G/\P:u\mapsto [u]$ and $j:\U^-\times \P\to \wt{\G}_\P:(u,x)\mapsto [u,x]$ are open embeddings. In addition, the differential $dm|_{(1,1)}:\Lie \U^-\oplus\Lie \P\to\Lie \G$ is an isomorphism. Therefore it induces an isomorphism $V_{\U^-\times \P}\isom V_\G$.
\end{Emp}

\begin{Cl} \label{C:parab}
For every $m\in \M$, we have the identity
\[\Dt_{\P,\G}(m)^2=(-1)^{\dim \U}\Dt_{\M,\G}(m)\cdot\la_\P(m).\]
\end{Cl}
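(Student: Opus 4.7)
The plan is to exploit the opposite parabolic to decompose $\Lie\G$ as an $\Ad\M$-module. Using $\Lie\G = \Lie\U^{-}\oplus\Lie\M\oplus\Lie\U$, and the fact that $\Ad m$ preserves each summand for $m\in\M$, we get natural identifications $\Lie\G/\Lie\P\cong\Lie\U^{-}$ and $\Lie\G/\Lie\M\cong\Lie\U^{-}\oplus\Lie\U$ as $\Ad m$-modules. This will turn both $\Dt_{\P,\G}(m)$ and $\Dt_{\M,\G}(m)$ into products of determinants over $\Lie\U$ and $\Lie\U^{-}$.

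First, I would write $A:=\Ad m^{-1}|_{\Lie\U}$ and observe that $\Lie\U^{-}$ is dual to $\Lie\U$ as an $\M$-module (via the Killing form, since $\M$ centralizes the pairing of opposite root spaces). Therefore $\Ad m^{-1}|_{\Lie\U^{-}}$ is conjugate to $(A^{-1})^{t}$, which has the same characteristic polynomial as $A^{-1}$. Setting $u:=\dim\U$, this yields
\[
\Dt_{\P,\G}(m)=\det(A^{-1}-1,\Lie\U)=\det(A)^{-1}(-1)^{u}\det(A-1,\Lie\U),
\]
and
\[
\Dt_{\M,\G}(m)=\det(A^{-1}-1,\Lie\U^-)\cdot \det(A-1,\Lie\U)=\Dt_{\P,\G}(m)\cdot\det(A-1,\Lie\U).
\]

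Next, I would identify $\det(A)^{-1}$ with $\la_\P(m)$. Since $\Lie\P=\Lie\M\oplus\Lie\U$ is $\Ad m$-stable, $\la_\P(m)=\det(\Ad m,\Lie\M)\cdot\det(\Ad m,\Lie\U)=\la_\M(m)\det(A)^{-1}$. The character $\la_\M:\M\to\B{G}_{m}$ of the connected reductive group $\M$ is trivial (it vanishes on the derived group, while $\Ad$ factors through $\M/Z(\M)$, or equivalently by checking on a split maximal torus where the roots occur in pairs $\pm\al$), hence $\la_\P(m)=\det(A)^{-1}$.

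Combining, the first displayed identity becomes $\Dt_{\P,\G}(m)=(-1)^{u}\la_\P(m)\det(A-1,\Lie\U)$. Eliminating $\det(A-1,\Lie\U)$ via the second display gives $\Dt_{\P,\G}(m)^{2}=(-1)^{u}\la_\P(m)\,\Dt_{\M,\G}(m)$, as claimed. The only potential subtlety is the duality of $\Lie\U$ and $\Lie\U^{-}$ as $\Ad\M$-modules; this is standard from root space decomposition after passing to a splitting field, and the identity to be proved is an identity of regular functions on $\M$, so it suffices to verify it after such an extension.
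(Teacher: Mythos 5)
Your argument is correct. It rests on the same key observation as the paper's one-line proof --- the duality between $\Lie\U$ and $\Lie\U^{-}$ under $\Ad\M$ --- but packages it differently. The paper reduces immediately to $m$ regular semisimple in a split maximal torus, decomposes $\Lie\G/\Lie\M$ into one-dimensional $\T$-eigenspaces indexed by $\pm\al$ for $\al\in\Phi(\U,\T)$, and checks the scalar identity $(t-1)^2=(-1)(t-1)(t^{-1}-1)t$ on each root, with the factor $t$ accounting for $\la_\P$ all at once. Your version works at the level of the blocks $\Lie\U^{\pm}$, introducing $A=\Ad m^{-1}|_{\Lie\U}$ and deriving the two factorizations $\Dt_{\P,\G}=\det(A)^{-1}(-1)^{u}\det(A-1)$ and $\Dt_{\M,\G}=\Dt_{\P,\G}\cdot\det(A-1)$, plus the separate identification $\la_\P(m)=\det(A)^{-1}$ using triviality of $\la_\M$ (which the paper records in \re{group}(c)). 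The block-level argument is a bit longer but avoids passing to individual eigenspaces except to verify the duality of $\Lie\U$ and $\Lie\U^{-}$; since, as you note, the claim is an identity of regular functions on $\M$, checking that duality (or the resulting determinant identity) on a split torus after a separable extension is enough, which is essentially the same density reduction the paper uses from the outset.
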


\begin{proof}
Since both sides of the equality are regular functions on $\M$, we may assume that $m$ is regular semisimple, and thus lies in a maximal torus $\T\subset \M$. Now we can decompose $\Lie \G/\Lie \M$ as a sum of $\T$-eigenspaces. Now the equality follows from the identity $(t-1)^2=(-1)(t-1)(t^{-1}-1)t$.
\end{proof}

\begin{Lem} \label{L:open}
In the notation of \re{parab}(b),  we have equalities
$j^*(\om_{\wt{\G}_\P}^l(v))=\om^l_{\U^-\times \P}(v)$ and $m^*(\om_{\G}(v))=\om^r_{\U^-\times \P}(v)$ for every $v\in V_\G$.
\end{Lem}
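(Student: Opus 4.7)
The strategy is to check each identity at $(1,1)$ and then propagate by different means. At $(1,1)$ both sides of both identities become tautological: both $m^*(\om_\G(v))$ and $\om^r_{\U^-\times\P}(v)$ restrict to the element of $V_{\U^-\times\P}$ corresponding to $v\in V_\G$ via the isomorphism $dm|_{(1,1)}$ of \re{parab}(b), and the analogous fact holds for the first identity once the canonical trivialization of $\C{K}_{\wt{\G}_\P}$ is unwound.

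For the second identity I would propagate by invariance under a transitive action. The action of $\U^-\times\P$ on itself given by $(u_0, p_0)\cdot(u,x):=(u_0 u,\,xp_0)$ is transitive, so it suffices to check that both sides are invariant. On the right-hand side, $\om^r_{\U^-\times\P}(v)$ is right-invariant, handling the $R_{p_0}$ factor; on the unipotent first factor one has $\om^r_{\U^-}=\om^l_{\U^-}$ since $\la_{\U^-}=1$, yielding $L_{u_0}$-invariance. On the left-hand side, the relation $m(u_0 u, xp_0) = u_0\cdot m(u,x)\cdot p_0$ shows that $m$ intertwines the action with $y\mapsto u_0 y p_0$ on $\G$, which preserves $\om_\G(v)$ because $\la_\G=1$ on the connected reductive $\G$ makes $\om_\G(v)$ simultaneously left- and right-invariant (see \re{group}(c)).

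For the first identity I plan to reduce to an explicit Jacobian calculation via \rl{meas}. Pulling back the identity $a_{\P,\G}^*(\om_\G(v))=\Dt_{\wt{\G}_\P}\cdot\om^l_{\wt{\G}_\P}(v)$ along $j$ and using $\Dt_{\wt{\G}_\P}([u,x])=\Dt_{\P,\G}(x)$ from \re{simple}(a) yields
\[
(a_{\P,\G}\circ j)^*(\om_\G(v))=\Dt_{\P,\G}(x)\cdot j^*(\om^l_{\wt{\G}_\P}(v)),
\]
so it suffices to show $(a_{\P,\G}\circ j)^*(\om_\G(v))=\Dt_{\P,\G}(x)\cdot\om^l_{\U^-\times\P}(v)$ on the open dense locus $\Dt_{\P,\G}\neq 0$, whence equality extends by continuity. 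Since $(a_{\P,\G}\circ j)(u,x)=uxu^{-1}$, a direct calculation of its differential at $(u_0,x_0)$ applied to left-trivialized vectors $(X,Y)\in\Lie\U^-\oplus\Lie\P$ gives, in left trivialization at $u_0 x_0 u_0^{-1}$, the vector $\Ad(u_0)[(\Ad(x_0^{-1})-1)X+Y]$. Evaluating $v$ on the resulting wedge, using $\det\Ad(u_0)=\la_\G(u_0)=1$ and the fact that the $\Lie\P$-components of $(\Ad(x_0^{-1})-1)X_i$ are annihilated against a chosen basis of $\Lie\P$, reduces the answer to $\det((\Ad(x_0^{-1})-1)|_{\Lie\G/\Lie\P})=\Dt_{\P,\G}(x_0)$, as required.

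The main technical obstacle is the Jacobian computation for $(u,x)\mapsto uxu^{-1}$: the map $\Ad(x_0^{-1})$ preserves $\Lie\P$ but mixes $\Lie\U^-$ with $\Lie\P$, so one has to argue that only the $\Lie\G/\Lie\P$-projection of $(\Ad(x_0^{-1})-1)X$ contributes to the top form. One must also keep the sign conventions between the exact-sequence isomorphism $\det T_{[g,h]}\wt{\G}_\P\cong\det\Lie\G$ from \re{canbun}(b) and the identification $V_{\U^-\times\P}\cong V_\G$ from \re{parab}(b) aligned throughout.
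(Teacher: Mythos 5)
Your argument for the second identity is essentially the paper's: you check agreement at $(1,1)$ and propagate by the mixed left-$\U^-$/right-$\P$ action, using $\la_{\U^-}=1$ to trade left- for right-invariance on the unipotent factor, exactly as the paper converts left $\U^-$-invariance into right $\U^-$-invariance. For the first identity, though, you take a genuinely different route. The paper again argues by invariance: it checks at $(1,1)$ and then observes that $j^*(\om^l_{\wt{\G}_\P}(v))$ is left $\U^-$-invariant because $j$ is $\U^-$-equivariant and $\om^l_{\wt{\G}_\P}(v)$ is $\G$-invariant (\re{right}(c)), and is left $\P$-invariant along the fibers by \re{right}(a); no Jacobian is computed. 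You instead feed \rl{meas} through $j$ and match it against a direct left-trivialized Jacobian computation of $(u,x)\mapsto uxu^{-1}$, getting equality on $\{\Dt_{\P,\G}\neq0\}$ and extending by density. Your calculation is correct (the block-triangular structure really does kill the $\Lie\P$-components of $(\Ad x_0^{-1}-1)X$ against $\Lambda^{\dim\P}\Lie\P$, and $\det\Ad u_0=1$), but note that it is in effect a repetition of the Jacobian computation already performed in the proof of \rl{meas}, just transported to the coordinates $\U^-\times\P$; the paper's invariance argument avoids that redundancy and the extra density step, at the cost of being slightly more implicit about what is being propagated. Both approaches are valid.
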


\begin{proof}
Since the fiber of each $j^*(\om_{\wt{\G}_\P}^l(v))$ and $\om^l_{\U^-\times \P}(v)$ at $1$ is $v$, for the first equality it suffices to show that $j^*(\om_{\wt{\G}_\P}^l(v))$ is left $(\U^-\times \P)$-invariant. For the  $\U^-$-invariance, notice that $j$ is $\U^-$-equivariant, and $\om_{\wt{\G}_\P}^l(v)$ is $\G$-invariant (see \re{right}(c)). For the $\P$-invariance, notice that $\om_{\wt{\G}_\P}^l(v)|_{[1]\times \P}$ is left $\P$-invariant (see \re{right}(a)).

For the second equality, one has to show that $m^*(\om_{\G}(v))$ is right $(\U^-\times \P)$-invariant.
Since $\om_{\G}(v)$ is $\G\times \G$-invariant, the pullback $m^*(\om_{\G}(v))$ is right $\P$-invariant and left $\U^-$-invariant.
Since the group $\U^-$ is unipotent, we have $\la_{\U^-}=1$, thus the differential form $m^*(\om_{\G}(v))$ is also right $\U^-$-invariant.
\end{proof}

\section{Smooth measures, restriction, and induction}

\noindent From now on, let $F$ be a local non-archimedean field,  and let $|\cdot|:F\m\to\B{R}\m$ be the norm map.
For every compact analytic subgroup $K$ over $F$, we denote by $\dt_K$ the Haar measure on $K$ with total measure $1$.

\begin{Emp}
{\bf Smooth measures.} Let $X$ be a smooth analytic variety over $F$.

 (a) Let  $C^{\infty}(X)$ (resp. $C_c^{\infty}(X)$) be the space of smooth (complex-valued) functions (resp.\,with compact support), and let
 $\C{M}(X)$ be the dual space of $C_c^{\infty}(X)$. Every non-vanishing ($F$-valued) analytic function $f$ on $X$ induces a smooth function $|f|\in C^{\infty}(X)$, defined by $|f|(x):=|f(x)|$ for every $x\in X$.

(b) We say that a measure $\chi\in \C{M}(X)$ is {\em smooth}, and write $\chi\in\C{M}^{\infty}(X)$,
if for every $x\in X$, there exists an open neighborhood $U\subset X$ of $x$ and an analytic isomorphism
$\phi:\C{O}_F^n\isom U$ such that $\phi^*(\chi|_{U})\in\C{M}(\C{O}_F^n)$ is a multiple of a Haar measure on $\C{O}_F^n$.

(c) By a construction of Weil \cite{We}, every non-vanishing top degree differential form $\om$ on $X$ defines a smooth measure $|\om|\in\C{M}^{\infty}(X)$.

Namely, for every open analytic embedding $\phi:\C{O}_F^n\isom U\subset X$, the differential form $\phi^*(\om)$  equals $fdx_1\wedge\ldots\wedge dx_n$ for some non-vanishing analytic function $f$ on $\C{O}_F^n$. Then $|f|\in C_c^{\infty}(\C{O}_F^n)$, and $|\om|$ is characterized by the condition that the pullback $\phi^*(|\om|)$ equals $|f|\dt_{\C{O}_F^n}\in \C{M}^{\infty}(\C{O}_F^n)$.
\end{Emp}

Now we give an equivalent (more geometric) definition of smooth measures.

\begin{Emp} \label{E:global}
{\bf An alternative description.}
Let $\C{K}^{-1}_X$ be the line bundle dual to $\C{K}_X$, and let $\Si_X\to X$ be the $F\m$-torsor corresponding to $\C{K}^{-1}_X$. Explicitly, $\Si_X$
is a space of pairs $(x,a)$, where $x\in X$ and $a$ is a non-zero element of the fiber $\C{K}^{-1}_X|_x$ of $\C{K}^{-1}_X$ at $x$.
\end{Emp}

\begin{Cl} \label{C:global}
The space $\C{M}^{\infty}(X)$ is canonically identified with the space $C^{\infty}(\Si_X,|\cdot|)$ of smooth functions $f:\Si_X\to\B{C}$ satisfying $f(bx)=|b|f(x)$ for all $b\in F\m$.
\end{Cl}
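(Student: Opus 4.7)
The plan is to exhibit the identification by patching a local construction. The main tool is that a non-vanishing top form $\om$ on an open subset $U \subset X$ provides two objects simultaneously: by Weil's construction, a smooth measure $|\om| \in \C{M}^{\infty}(U)$, and a section $\sigma_{\om}:U\to \Si_X|_U$ sending $x$ to the pair $(x,(\om|_x)^{-1})$, where $(\om|_x)^{-1}\in \C{K}^{-1}_X|_x$ denotes the element dual to $\om|_x$. If $\om'=h\om$ for some non-vanishing analytic $h\in \C{O}_U\m$, then $|\om'|=|h|\cdot|\om|$ and $\sigma_{\om'}(x)=h(x)^{-1}\sigma_{\om}(x)$ under the $F\m$-action on $\Si_X$.

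The first step is to define the forward map $\chi \mapsto f_\chi$. Given $\chi\in\C{M}^{\infty}(X)$ and a local trivializing form $\om$ on $U$, the definition of a smooth measure (together with the observation that Haar measures on $\C{O}_F^n$ are positive constant multiples of $|dx_1\wedge\ldots\wedge dx_n|$) implies that there is a unique $g\in C^{\infty}(U)$ with $\chi|_U = g\cdot|\om|$. I set $f_\chi(\sigma_{\om}(x)):=g(x)$ and extend to all of $\Si_X|_U$ by the required rule $f_\chi(bz)=|b|f_\chi(z)$. Changing $\om$ to $h\om$ replaces $g$ by $g/|h|$ and $\sigma_{\om}(x)$ by $h(x)^{-1}\sigma_{\om}(x)$; the scaling rule then forces $f_\chi(\sigma_{h\om}(x))=|h(x)|^{-1}g(x)=g(x)/|h(x)|$, which is exactly the new representative. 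Hence $f_\chi$ is globally well-defined and is smooth because it pulls back to $g$ under every section $\sigma_\om$.

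The inverse map $f\mapsto \chi_f$ is constructed analogously: locally on $U$ choose $\om$ and set $\chi_f|_U:=(f\circ\sigma_\om)\cdot|\om|$. The equivariance of $f$ shows that the right-hand side is unchanged when $\om$ is replaced by $h\om$, so the local recipes patch to a global measure $\chi_f\in \C{M}^{\infty}(X)$; its smoothness follows from smoothness of $f\circ\sigma_\om$ and Weil's construction of $|\om|$. The two maps are tautologically inverse to each other: on any trivializing $U$, $\chi_{f_\chi}|_U=(f_\chi\circ\sigma_\om)\cdot|\om|=g\cdot|\om|=\chi|_U$, and symmetrically in the other direction.

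The only point requiring care, and the main (mild) obstacle, is to check that the two notions of ``smooth'' coincide locally. This reduces, on a chart $\phi:\C{O}_F^n\isom U$, to the statement that a measure of the form $g\cdot|dx_1\wedge\ldots\wedge dx_n|$ with $g\in C^{\infty}$ agrees with a smooth multiple of a Haar measure, which is immediate since the standard Haar measure on $\C{O}_F^n$ is itself $|dx_1\wedge\ldots\wedge dx_n|$ up to a positive scalar. This verifies the bijection on local pieces and completes the proof.
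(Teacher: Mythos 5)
Your proof is correct and follows essentially the same route as the paper: both arguments reduce to the sheaf-local situation where a nonvanishing top form $\om$ exists, use $\om$ to trivialize $\Si_X$ (your section $\sigma_\om$ is the same data as the paper's isomorphism $i_\om:\Si_X\isom F^\times\times X$), and then match $g\cdot|\om|$ with the $|\cdot|$-equivariant function determined by $g$. You are somewhat more explicit about writing out both directions of the bijection and the change-of-$\om$ cocycle check, but the underlying mechanism is identical.
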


\begin{proof}
Since both $\C{M}^{\infty}(X)$ and $C^{\infty}(\Si_X,|\cdot|)$ are defined as global sections of certain sheaves on $X$, we can construct an isomorphism  $\C{M}^{\infty}(X)\isom C^{\infty}(\Si_X,|\cdot|)$ locally on $X$.
Thus we can assume there exists a non-vanishing differential form $\om\in \Gm(X,\C{K}_X)$.

Note that the tensor product $s\mapsto s\otimes\om$ defines an isomorphism of line bundles $\C{K}^{-1}_X\isom\C{O}_X$. Hence it induces an isomorphism $i_{\om}:\Si_X\isom F\m\times X$ of the corresponding $F\m$-torsors over $X$. For every
$f\in C^{\infty}(X)$, we denote by $\wt{f}\in C^{\infty}(F\m\times X,|\cdot|)$ the function $(a,x)\mapsto |a|f(x)$. The map $f|\om|\mapsto i_{\om}^*(\wt{f})$ defines an isomorphism $\C{M}^{\infty}(X)\isom C^{\infty}(\Si_X,|\cdot|)$, which is independent of $\om$.
\end{proof}

\begin{Emp} \label{E:smmeas}
{\bf Smooth measures with compact support.}
We denote by
 $\C{M}_c(X)$ (resp. $\C{H}(X)=\C{M}^{\infty}_c(X)$) the space of measures (resp.\,smooth measures) on $X$ with compact support. %Notice that $\C{H}(X)$ is the usual Hecke algebra when $X$ is a group.
Moreover, if a group  $G$ acts on $X$, then $G$ acts on the space $\C{H}(X)$, and we denote by $\C{H}(X)^G$ and $\C{H}(X)_G$
the spaces of $G$-invariants and $G$-coinvariants, respectively.
\end{Emp}

\begin{Emp} \label{E:pullback}
{\bf Pullback of smooth measures.} (a) Assume that we are given a morphism $f:X\to Y$ of smooth analytic varieties and an  isomorphism $i:f^*(\C{K}_Y)\isom\C{K}_X$ of line bundles. Then $i$ induces an isomorphism of line bundles $\C{K}^{-1}_X\isom  f^*(\C{K}^{-1}_Y)$, hence a morphism of $F\m$-torsors $(f,i):\Si_X\isom X\times_Y \Si_Y\to \Si_Y$.

By \rcl{global},
$(f,i)$ induces a pullback map $(f,i)^*:\C{M}^{\infty}(Y)\to \C{M}^{\infty}(X)$. If, in addition, $f$ is proper, then $(f,i)^*$ induces a pullback $(f,i)^*:\C{H}(Y)\to \C{H}(X)$.

(b) Note that if $f:X\to Y$ is a local isomorphism, then the morphism of line bundles $i_f:f^*(\C{K}_Y)\to\C{K}_X$ from \re{Om}(b) is an isomorphism.
Therefore, by (a), $f$ gives rise to a  pullback map $f^*=(f,i_f)^*:\C{M}^{\infty}(Y)\to \C{M}^{\infty}(X)$.
Moreover, $f$ induces a pullback map $f^*:\C{H}(Y)\to \C{H}(X)$, if in addition $f$ is proper.

In particular, if $X\hra Y$ is an open (resp. open and closed) embedding, then we have a  restriction map $\res:\C{M}^{\infty}(Y)\to \C{M}^{\infty}(X)$
(resp. $\res:\C{H}(Y)\to \C{H}(X)$).
\end{Emp}

The following simple lemma is basic for what follows.

\begin{Lem} \label{L:push}
(a) Let $f:X\to Y$ be a smooth map between smooth analytic varieties.
Then the pushforward map $f_!:\C{M}_c(X)\to \C{M}_c(Y)$ satisfies
$f_!(\C{H}(X))\subset \C{H}(Y)$, that is, $f_!(h)\in \C{H}(Y)$ for every $h\in \C{H}(X)$.

(b) Let $G$ be an analytic group, and let $f:X\to Y$ be a principal $G$-bundle. Then the map $f_!$ induces an isomorphism
$\C{H}(X)_G\isom \C{H}(Y)$.
\end{Lem}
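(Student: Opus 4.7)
The plan is to reduce both assertions to explicit local computations via the analytic-local structure of smooth maps and a partition-of-unity argument.

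\emph{Part (a).} The claim is local on $Y$; after choosing an analytic cover of $\Supp(h)$ and a subordinate partition of unity, it is also local on $X$. Since over a local non-archimedean field every \'etale map of smooth analytic varieties is a local analytic isomorphism, the non-archimedean implicit function theorem shows that $f$ is, in analytic local coordinates on source and target, a coordinate projection $\pi:\C{O}_F^n\times\C{O}_F^k\to\C{O}_F^n$. A smooth compactly supported measure on the source then has the form $\phi(x,y)\,\dt_{\C{O}_F^n}\otimes\dt_{\C{O}_F^k}$ with $\phi\in C_c^\infty$, and Fubini yields $\pi_!$ of this measure as $\Phi(x)\,\dt_{\C{O}_F^n}$ where $\Phi(x)=\int\phi(x,y)\,\dt_{\C{O}_F^k}(y)$. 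Local constancy and compactness of support of $\phi$ pass to $\Phi$, so $\pi_!(h)$ is smooth.

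\emph{Part (b).} Since the $G$-action on $X$ preserves the fibers of $f$ and acts on each fiber by translation, fiber integration against translation-invariant Haar measure is $G$-invariant; hence $f_!$ factors through $\bar f_!:\C{H}(X)_G\to\C{H}(Y)$. To show $\bar f_!$ is an isomorphism, I would trivialize $f$ over an analytic open cover $\{U_i\}$ of $Y$ and use a subordinate partition of unity on $Y$ to localize, reducing to the trivial bundle $\pr_1:Y\times G\to Y$ with $G$ acting by right translation on the second factor. Shrinking $Y$ further and using that on a non-archimedean analytic product any locally constant compactly supported function is a finite $\B{C}$-linear combination of pure tensors, every element of $\C{H}(Y\times G)$ is a finite sum of pure tensors $\nu\otimes\psi$ with $\nu\in\C{H}(Y)$ and $\psi\in\C{H}(G)$. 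Under this identification, $\bar f_!$ becomes $\nu\otimes\psi\mapsto\bigl(\int_G\psi\bigr)\nu$, and $G$ acts trivially on $\nu$ and by translation on $\psi$. Hence the assertion reduces to showing that integration $\int_G:\C{H}(G)\to\B{C}$ identifies the translation coinvariants $\C{H}(G)_G$ with $\B{C}$.

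\emph{The main obstacle} is this last statement. I would prove it by the standard Hecke-algebra argument: fix a Haar measure $dh$ on $G$ so that $\C{H}(G)=C_c^\infty(G)\,dh$; any $\phi\in C_c^\infty(G)$ is bi-invariant under some sufficiently small compact open subgroup $K\subset G$, hence lies in the finite-dimensional subspace spanned by $\{\mathbf{1}_{KgK}\}$ for finitely many $g$. Inside this finite-dimensional subspace, the kernel of $\int_G$ is generated by the differences $\mathbf{1}_{KgK}-\mathbf{1}_{Kg'K}$, and each such difference is a difference of right-translates of a single element, so it vanishes in the translation coinvariants. Together with the obvious surjectivity of $\int_G$, this gives $\C{H}(G)_G\isom\B{C}$ and completes the proof.
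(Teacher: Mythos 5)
Part (a) follows the same localize-to-a-coordinate-projection argument as the paper, so there is nothing to add there. For part (b) you take a genuinely different route: the paper simply exhibits the inverse map $\C{H}(Y)\to\C{H}(X)_G$, $h\mapsto [h\pp\dt_K]$ for a compact open subgroup $K\subset G$ (after trivializing the bundle), and leaves the verification to the reader; you instead decompose $\C{H}(Y\times G)$ as the algebraic tensor product $\C{H}(Y)\otimes\C{H}(G)$, observe that $\bar f_!$ is $\mathrm{id}\otimes\int_G$, and thereby reduce everything to the single statement $\int_G:\C{H}(G)_G\isom\B{C}$. That reduction is correct and makes the well-definedness and bijectivity of the paper's inverse rather more transparent.

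The last step, however, does not go through as written. You assert that the kernel of $\int_G$ on the span of the $\mathbf{1}_{KgK}$ is generated by the differences $\mathbf{1}_{KgK}-\mathbf{1}_{Kg'K}$, and that each such difference is a difference of right-translates of a single element. Both claims fail in general: distinct double cosets $KgK$ have different Haar volumes, so $\mathbf{1}_{KgK}-\mathbf{1}_{Kg'K}$ is typically not in $\ker(\int_G)$ at all; and $\mathbf{1}_{KgK}$ is a sum of $[KgK:K]$ right-translates of $\mathbf{1}_K$, with an index that varies with $g$, so these functions are not right-translates of one another. The repair is to use left cosets rather than double cosets. Any $\phi\in C_c^\infty(G)$ is left $K$-invariant for some small compact open $K$, hence $\phi=\sum_i c_i\mathbf{1}_{Kg_i}$ with finitely many $g_i$. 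Now every $\mathbf{1}_{Kg_i}$ is a right-translate of $\mathbf{1}_K$ and all have the same Haar volume, so on this span $\ker(\int_G)$ really is spanned by the differences $\mathbf{1}_{Kg_i}-\mathbf{1}_{Kg_j}$, and each of these vanishes in the right-translation coinvariants. With this correction (and noting surjectivity via $\int_G\dt_K=1$), you obtain $\C{H}(G)_G\isom\B{C}$ and the argument is complete.
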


\begin{proof}
(a) The question is local in $X$ and $Y$, so we may assume that
$X=\C{O}_F^{n+m}$, $Y=\C{O}_F^{n}$, $f$ is the projection, and $h=\dt_X$.
In this case, $f_!(h)=\dt_Y$.

(b) The assertion is local in $Y$, and $f$ is locally trivial, so we may assume that $X=G\times Y$.
In this case, for every compact open subgroup $K\subset G$, the map $\C{H}(Y)\to \C{H}(X)_G$ defined by $h\mapsto [h\pp\dt_K]$
is the inverse of $f_!:\C{H}(X)_G\isom \C{H}(Y)$.
\end{proof}

\begin{Emp} \label{E:ind}
{\bf The induced space.}
(a) Let $G$ be an analytic group, $H\subset G$ a closed analytic subgroup, and let $H$ act on a smooth analytic variety $X$. Then the product $G\times X$ is equipped with an action of $G\times H$ defined by $(g,h)(g',x):=(gg'h^{-1},h(x))$, and the quotient $\Ind_H^G(X):=G\overset{H}{\times}X$ is smooth and equipped with an action of $G$.

(b) Consider the diagram $\Ind_H^G(X)\overset{p_1}{\lla}G\times X \overset{p_2}{\lra}X$, where $p_1$ and $p_2$ are the natural projections. Since $p_1$ is a $G$-equivariant $H$-bundle, while $p_2$ is an $H$-equivariant $G$-bundle, \rl{push}(b)
implies that we have a natural isomorphism
\[
\varphi_H^G:=(p_2)_!\circ (p_1)_!^{-1}:\C{H}(\Ind_H^G(X))_G\bisom \C{H}(G\times X)_{G\times H}\isom \C{H}(X)_H.
\]

(c) By construction, the isomorphism $\varphi_H^G$ from (b) can be characterized as the unique map
$\varphi_H^G:\C{H}(\Ind_H^G(X))_G\to \C{H}(X)_H$ such that the composition $\varphi_H^G\circ (p_1)_!:\C{H}(G\times X)_{G\times H}\to\C{H}(X)_H$
is equal to $(p_2)_!$

(d) Assume that $H$ acts on $X$ trivially. Then $\Ind_H^G(X)=(G/H)\times X$, and the natural projection
$\pr:(G/H)\times X\to X$ satisfies  $\pr\circ p_1=p_2$. Thus, by (c),
the map $\varphi_H^G$ obtained from $\pr_!:\C{H}((G/H)\times X)\to \C{H}(X)$.

(e) Assume that $H$ is a retract of $G$, and let $p:G\to H$ be a homomorphism such that $p|_H=\Id_H$. Then $p$ induces a map
$p:\Ind_H^G(X)\to \Ind_H^H(X)=X$ such that $p\circ p_1=p_2$. Thus, by (c), the map
$\varphi_H^G$ is obtained from $p_!:\C{H}(\Ind_H^G(X))\to \C{H}(X)$.
\end{Emp}

\begin{Emp} \label{E:alcase}
{\bf Notation.} For every (smooth) algebraic variety $\X$ over $F$, we denote by $X$ the corresponding (smooth) analytic variety $\X(F)$. In particular, we have $G=\G(F)$, $\wt{G}_H=\wt{\G}_\H(F)$, etc. We also denote by $a_{H,G}:\wt{G}_H\to G$, $\Dt_{H,G}:H\to F$, $\la_G:G\to F\m$, etc., the maps induced by $a_{\H,\G}$, $\Dt_{\H,\G}$ and $\la_\G$, respectively.
\end{Emp}

\begin{Emp} \label{E:main}
{\bf The restriction map.} In the case of \re{parab}, set $\wt{G}:=\wt{G}_P$ and $a:=a_{P,G}$.

(a) In \re{canbun}(d) we constructed an $\Ad G$-equivariant isomorphism $\wt{i}:a^*(\C{K}_G)\isom \C{K}_{\wt{G}}$, which by \re{pullback}(a) defines
a pullback map $\wt{a}^*=(a,\wt{i})^*:\C{M}^{\infty}(G)\to\C{M}^{\infty}(\wt{G})$.

Explicitly, for every $v\in V_G$ and $f\in C^{\infty}(G)$, the map $\wt{a}^*$ is given by the formula
$\wt{a}^*(f|\om_G(v)|):=a^*(f)|\om^l_{\wt{G}}(v)|$. Moreover, since $a:\wt{G}\to G$ is proper,
the map  $\wt{a}^*$ induces a $G$-equivariant map $\wt{a}^*:\C{H}(G)\to\C{H}(\wt{G})$.

(b) Since $(\G/\P)(F)$ equals $\G(F)/\P(F)=G/P$ (see \cite[Prop 20.5]{Bo}),
the set $\wt{G}=\wt{\G}(F)$ equals $\Ind_P^G(P)$. Hence, by \re{ind}(b), we have a natural isomorphism
\[
\varphi_P^G:\C{H}(\wt{G})_{G}=\C{H}(\Ind_{P}^{G}(P))_{G}\isom \C{H}(P)_{P}.
\]
We denote by $\Res_P^G:\C{H}(G)_{G}\to \C{H}(P)_{P}$ the composition  map $\varphi_P^G\circ \wt{a}^*$.

(c) Let $p:\P\to \M$ be the projection (see \rl{eqrk}). Since $p$ is smooth, the induced map $p:P\to M$ is smooth.
Thus $p$ induces a map $p_!:\C{H}(P)_{P}\to \C{H}(M)_{M}$ (see \rl{push}(a)), and we denote by $\Res_{P;M}^G:\C{H}(G)_{G}\to \C{H}(M)_{M}$ the composition $p_!\circ\Res_P^G$.
\end{Emp}

\begin{Emp} \label{E:gen}
{\bf Generalized functions and induction.}
(a) Let $X$ be a smooth analytic variety over $F$, and let $\wh{C}(X):=\Hom_{\B{C}}(\C{H}(X),\B{C})$ be the space generalized functions on $X$. Then the space $\wh{C}(X)$ is equipped with a {\em weak topology}, which is the coarsest topology such that for every $h\in\C{H}(X)$ and $a\in\B{C}$,
the set $\{\la\in \wh{C}(X)\,|\,\la(h)\neq a\}$ is open.

(b) For a group $G$  acting on $X$, we denote by $\wh{C}^G(X)\subset\wh{C}(X)$ the subspace of invariant generalized functions with the induced topology. Equivalently, $\wh{C}^G(X)$ is the linear dual space of $\C{H}(X)_G$ equipped with the weak topology.

(c) By \rl{push}(a), every smooth map $f:X\to Y$ induces a continuous map $f^*:\wh{C}(Y)\to \wh{C}(X)$ dual to $f_!$. If, in addition, $f$ is $G$-equivariant, then $f^*$ induces a continuous map $\wh{C}^G(Y)\to \wh{C}^G(X)$.

(d) In the case of \re{ind}, we have a linear homeomorphism
$\wh{C}^H(X)\isom\wh{C}^G(\Ind_H^G(X))$, dual to $\varphi_H^G$.

(e) In the case of \re{main}, we denote by $\Ind_{P;M}^G:\wh{C}^{M}(M)\to \wh{C}^{G}(G)$ (resp.
$\Ind_P^G:\wh{C}^{P}(P)\to \wh{C}^{G}(G)$), the map dual to $\Res_{P;M}^G$ (resp. $\Res_{P}^G$).

(f) Let $\H$ be an algebraic group over $F$. Then to every admissible representation $\pi$ of $H=\H(F)$  we can associate its character $\chi_{\pi}\in\wh{C}^{H}(H)$.
\end{Emp}

\section{Relation to characters of induced representations}

\noindent Assume that we are in the case of \re{main}.

%Since the subset $\bO _M U_0\subset P_0$ is Ad-invariant we can consider the quotient $Y$ of $G\times bO _M U_0/P_0$ where $P_0$ acts  by
%left shifts on $G$ and by conjugation on $\bO _M U_0$. Let $\ti h:G\times \bO _M U_0\to G$ by given by $\ti h(g,a)=g^{-1}ag$.
%It is easy to see that $\ti h$ induces an isomorphism $Y\to \bO _G$

%Lemma 2 follows immediately from this equality.

\begin{Prop} \label{P:indrep}
Let $\tau$ be an admissible representation of $P$, and let
$\pi=\un{\Ind}_{P}^{G}(\tau)$ be the induced representation. Then
we have the equality $\chi_{\pi}=\Ind_P^G(\chi_\tau)$.
\end{Prop}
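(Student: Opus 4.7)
The plan is to unwind $\Res_P^G = \varphi_P^G \circ \wt{a}^*$ as a concrete integral transform $\C{H}(G) \to \C{H}(P)_P$, and then match $\chi_\tau(\Res_P^G(h))$ against the classical trace formula for $\un{\Ind}_P^G(\tau)$.

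First I would make $\varphi_P^G$ explicit. Using the characterization in \re{ind}(c) via the diagram $\wt{G} \xleftarrow{p_1} G \times P \xrightarrow{p_2} P$, for every $\Ad P$-invariant test function $f$ on $P$ and every $\mu \in \C{H}(\wt{G})$ one checks the identity
$$\langle \varphi_P^G(\mu), f \rangle = \int_{\wt{G}} \tilde{f}(w) \, d\mu(w),$$
where $\tilde{f}([g,p]) := f(p)$ is well-defined by the $\Ad P$-invariance of $f$; this is tautological from $\tilde{f} \circ p_1 = f \circ p_2$. Combined with the formula $\wt{a}^*(F|\om_G(v)|) = (F \circ a)|\om^l_{\wt{G}}(v)|$ from \re{main}(a), this realizes $\Res_P^G(h)$, for $h = F|\om_G(v)|$, as the class in $\C{H}(P)_P$ of the measure $p \mapsto \bigl(\int_{G/P} F(gpg^{-1}) \, d(gP)\bigr) dp$, where the Haar measures on $G/P$ and $P$ are those determined by $v \in V_G$ through the left-invariance of $\om^l_{\wt{G}}(v)$ along the fibers of $\pr_{G/P}$ (see \re{right}(a)).

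Pairing with the $\Ad P$-invariant generalized function $\chi_\tau$ then gives
$$\chi_\tau(\Res_P^G(h)) = \int_{G/P} \int_P F(gpg^{-1}) \chi_\tau(p) \, dp \, d(gP),$$
which is the classical trace-formula expression for $\Tr(\un{\Ind}_P^G(\tau)(h))$: one realizes $\pi(h)$ as an integral operator on sections of the induced bundle with kernel $K(g_1, g_2) = \int_P h(g_1 p g_2^{-1}) \tau(p) \, dp$, computes $\Tr \pi(h) = \int_{G/P} \Tr K(g, g) \, d(gP)$, and uses that $\chi_\pi$ factors through $\C{H}(G)_G$ to absorb the conjugation convention. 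Since $\wt{a}^*$ is built from the bare identification $\wt{i}: a^*\C{K}_G \isom \C{K}_{\wt{G}}$ with no Jacobian correction, no modular-character factor appears, which precisely matches the \emph{non-normalized} induction.

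The main technical obstacle is the measure-theoretic bookkeeping: verifying that the abstract $\varphi_P^G$ produces the concrete integral with correctly normalized Haar measures on $P$ and $G/P$, and reconciling the conjugation convention ($gpg^{-1}$ on the geometric side versus $g^{-1}pg$ on the trace-formula side). These are handled by invoking \rl{open} and the left-invariance statements of \re{right}, together with the fact that $\chi_\pi$ and $h$ may be viewed in the coinvariants $\C{H}(G)_G$.
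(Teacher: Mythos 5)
Your conceptual plan is the same as the paper's: make $\Res_P^G = \varphi_P^G \circ \wt{a}^*$ explicit and then match $\chi_\tau(\Res_P^G(h))$ against a trace formula for $\un{\Ind}_P^G(\tau)$. The difference is that you work with a continuous integral over $G/P$, whereas the paper discretizes via the finite double coset decomposition $G = \sqcup_{g\in A} Kg^{-1}P$ and arrives at $\Res_P^G([h]) = \sum_{g\in A}[h_{g,P}]$ (\rl{form}(c)). That discretization is not merely cosmetic; it sidesteps two genuine gaps in your version.

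First, the expression $\int_{G/P} F(gpg^{-1})\,d(gP)$ is not well-posed as written. There is no $G$-invariant measure on $G/P$ (since $P$ is not unimodular), the form $\om^l_{\wt{G}}(v)$ does not canonically factor as a measure on $G/P$ times fiberwise Haar measures, and the integrand $F(gpg^{-1})$ depends on the chosen representative $g$ of $gP$ (replacing $g$ by $gp'$ conjugates $p$). The class in $\C{H}(P)_P$ is of course well-defined, but going from the abstract $\varphi_P^G$ to a clean integral formula requires choosing a section of $G\to G/P$ and carefully tracking the resulting measure normalizations, none of which is supplied. The paper's $K$-normalization avoids all of this: the base becomes the discrete set $P\backslash G/K$, and the only nontrivial measure comparison is the identity \form{push}, proved cleanly using \rl{open}. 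Second, $\chi_\tau$ is a generalized function and need not be represented by a locally integrable function on $P$, so $\int_P F(gpg^{-1})\chi_\tau(p)\,dp$ is a distributional pairing; commuting it with the outer $G/P$-integral is a Fubini-type interchange that you assert but do not justify. In the discrete version this is a finite sum and the issue evaporates.

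Finally, the kernel trace formula $\Tr\,\pi(h) = \int_{G/P}\Tr K(g,g)\,d(gP)$, which you invoke as classical, is exactly what the paper proves in its final step: it decomposes $V^K = \oplus_{g\in A} V^K_g$, identifies $V^K_g \cong W^{K_{g,P}}$, verifies that $h_{\{g\}}$ corresponds to $\tau(h_{g,P})$, and concludes $\Tr(h,\pi)=\sum_{g\in A}\Tr(h_{g,P},\tau)$. For your argument to be self-contained you would still need to establish this formula, and the cleanest way is essentially the paper's discrete computation. So the proposal identifies the right strategy and correctly locates where the non-normalized (no modular factor) character of the construction enters, but it treats the measure bookkeeping on $G/P$, the distributional Fubini step, and the trace formula itself as already handled, when in fact they constitute most of the work.
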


To prove the result, we will compute both $\chi_{\pi}$ and $\Ind_P^G(\chi_\tau)$ explicitly.

\begin{Emp} \label{E:form}
{\bf Notation.}
(a) Let $K\subset G$ be a compact open subgroup, set $K_P:=K\cap P$, and let  $\mu^K$ (resp. $\mu^{K_{P}}$)
be the left-invariant Haar measure on $G$ (resp.\,$P$) normalized by the condition that $\mu_K(K)=1$ (resp. $\mu^{K_{P}}(K_{P})=1$).

(b) Set $\wt{K}:=K\overset{K_P,\Ad}{\times} P=\Ind_{K_P}^{K}(P)$. Then $\wt{K}\subset\wt{G}$ is an open and closed subset, and we set
\[
\Res_{K_P}^K:\C{H}(G)_{K}\overset{\wt{a}^*}{\lra}\C{H}(\wt{G})_{K}\overset{\res}{\lra}\C{H}(\wt{K})_{K}\overset{\varphi^{K}_{K_P}}{\lra}\C{H}(P)_{K_P},
\]
where $\wt{a}^*$ was defined in \re{main}(a) and $\varphi^{K}_{K_P}$  was defined in \re{ind}(b).

(c) For every $h\in \C{H}(G)^K$, we define $f:=h/\mu^K\in C_c^{\infty}(G)$, $f_P:=f|_{P}\in C_c^{\infty}(P)$ and  $h_{P}:=f_P\mu^{K_{P}}\in \C{H}(P)$.

(d) For every $g\in G$, we set $K_g:=gKg^{-1}$, $h_g:=(\Ad g^{-1})^*(h)\in\C{H}(G)^{K_g}$,
$K_{g,P}:=K_g\cap P$ and $h_{g,P}:=(h_g)_P$ (see (c)).

(e) Fix a set of representatives $A\subset G$ of double classes  $P\bs G/K$. The set $A$ is finite, because $P\bs G$ is compact.
\end{Emp}

\begin{Lem} \label{L:form}
In the notation of \re{form}, we have equalities:

(a)  $\Res_P^G([h])=\sum_{g\in A} \Res_{K_{g,P}}^{K_g}([h_g])$ in $\C{H}(P)_{P}$;

(b) $\Res_{K_{P}}^{K}([h])=[h_P]$ in $\C{H}(P)_{K_P}$;

(c)  $\Res_P^G([h])=\sum_{g\in A} [h_{g,P}]$ in $\C{H}(P)_{P}$.
\end{Lem}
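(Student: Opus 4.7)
\textbf{Plan for Lemma \ref{L:form}.} The three assertions are intertwined: (c) follows by combining (a) and (b), so the real content lies in proving (a) and (b) separately. Applying (b) with $(K, h)$ replaced by $(K_g, h_g)$ gives $\Res_{K_{g,P}}^{K_g}([h_g]) = [h_{g,P}]$ in $\C{H}(P)_{K_{g,P}}$, hence also in $\C{H}(P)_P$ after the natural projection; substituting into (a) then yields (c).

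For (b), I aim to verify the pointwise identity $(\wt{a}^* h)|_{\wt{K}} = (p_1)_!(\dt_K \pp h_P)$ in $\C{H}(\wt{K})$, where $p_1 : K \times P \to \wt{K}$ is the canonical $K_P$-bundle. Once this is established, applying $\varphi_{K_P}^K$ and using the characterization $\varphi_{K_P}^K \circ (p_1)_! = (p_2)_!$ from \re{ind}(c) yields $\Res_{K_P}^K([h]) = (p_2)_!(\dt_K \pp h_P) = h_P$ (since $\dt_K$ has total mass $1$). The identity is checked locally in the open chart $j : \U^- \times \P \hra \wt{\G}$ of \re{parab}(b): by \rl{open}, $j^*(\om^l_{\wt{G}}(v))$ is the product form $\om^l_{U^-}(v_1) \wedge \om^l_P(v_2)$ under the splitting $V_G \cong V_{U^-} \otimes V_P$. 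Assuming an Iwahori-like decomposition $K = (K \cap U^-) \cdot K_P$ (the general case reduces by a covering argument using $G$-translates of $j(\U^- \times \P)$ and the $G$-invariance of $\om^l_{\wt{G}}(v)$), both sides restrict to $f_P(p) \cdot |\om^l_{U^-}(v_1)| \pp |\om^l_P(v_2)|$ on $(K \cap U^-) \times P$: the LHS uses $f(upu^{-1}) = f_P(p)$ by $K$-conjugation invariance of $h$, and the RHS uses the $K_P$-conjugation invariance of $f_P$ together with the normalization identity $|\om^l_G(v)|(K) = |\om^l_{U^-}(v_1)|(K \cap U^-) \cdot |\om^l_P(v_2)|(K_P) = 1$ coming from $\mu^K(K) = 1$.

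For (a), I use the $K$-orbit decomposition $G/P = \bigsqcup_{g \in A} K \cdot [g^{-1}]$ (obtained by inverting the representatives $A \subset P \bs G / K$), which lifts via $\pr_{G/P}$ to a disjoint decomposition $\wt{G} = \bigsqcup_{g \in A} \wt{G}^g$ with $\wt{G}^g := \pr_{G/P}^{-1}(K \cdot [g^{-1}])$. Hence $\Res_P^G([h]) = \sum_{g \in A} \varphi_P^G([(\wt{a}^* h)|_{\wt{G}^g}])$ in $\C{H}(P)_P$. From $\wt{a} \circ L_g = \Ad(g) \circ \wt{a}$ I deduce $L_{g^{-1}}^*(\wt{a}^* h) = \wt{a}^* h_g$, and since $L_g$ sends $\wt{G}^g$ to $\wt{K_g}$ (because $L_g$ maps $K \cdot [g^{-1}]$ to $K_g \cdot [1]$ in $G/P$), this gives $(\wt{a}^* h_g)|_{\wt{K_g}} = L_{g^{-1}}^*((\wt{a}^* h)|_{\wt{G}^g})$. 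Since $L_{g^{-1}}^*$ acts as the $g$-action on $\C{H}(\wt{G})$ and is therefore trivial in $\C{H}(\wt{G})_G$, these two measures have equal classes in $\C{H}(\wt{G})_G$, whence $\varphi_P^G$ sends them to the same class in $\C{H}(P)_P$. It remains to invoke a naturality observation: for any $\eta \in \C{H}(\wt{G})$ supported in $\wt{K_g}$, lift $\eta$ through the $K_{g,P}$-bundle $p_1|_{K_g \times P} : K_g \times P \to \wt{K_g}$ to $\eta_1 \in \C{H}(K_g \times P)$; since $\eta_1$ is simultaneously a lift through the ambient $P$-bundle $p_1 : G \times P \to \wt{G}$, the characterization \re{ind}(c) shows that $\varphi_P^G([\eta]) \in \C{H}(P)_P$ and $\varphi_{K_{g,P}}^{K_g}([\eta]) \in \C{H}(P)_{K_{g,P}}$ are both represented by $(p_2)_!(\eta_1) \in \C{H}(P)$, so the former is the image of the latter under the projection $\C{H}(P)_{K_{g,P}} \to \C{H}(P)_P$. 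Applied to $\eta = (\wt{a}^* h_g)|_{\wt{K_g}}$, this identifies the $g$-th summand with $\Res_{K_{g,P}}^{K_g}([h_g])$ and proves (a). The main technical subtlety in the whole argument is the normalization bookkeeping in (b), which is routine but requires care.
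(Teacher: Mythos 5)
Your proof follows the same route as the paper's: the same $K$-orbit decomposition of $G/P$ and translation of $\wt{G}^g$ to $\wt{K_g}$ in (a), the same naturality of $\varphi_P^G$ with respect to the inclusion $\wt{K_g}\hra\wt{G}$ (which the paper records as a commuting square), the same reduction in (b) to $(\wt{a}^*h)|_{\wt{K}} = (p_1)_!(\dt_K \pp h_P)$ followed by a check on the chart $j:K^-\times P\hra\wt{K}$, and (c) derived by substitution. Two small slips in (b): the normalization $|\om^l_G(v)|(K)=1$ does not "come from $\mu^K(K)=1$" (that is a definition of $\mu^K$, not a constraint on $v$) — it is a harmless WLOG scaling of $v$, since the identity being proved is $v$-independent; and the product formula $|\om^l_G(v)|(K)=|\om^l_{U^-}(v_1)|(K^-)\cdot|\om^l_P(v_2)|(K_P)$ requires the Iwahori decomposition $K=K^-K_P$, which you assume but which the paper's argument never needs — the factor $|\om^l_P(v_2)|(K_P)$ in $h_P$ simply cancels against the $K_P$-fiber integration in $(p_1)_!$, restricted to $p_1^{-1}(j(K^-\times P))=K^-K_P\times P$. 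Also, the covering argument should use $K$-translates (not $G$-translates, since $\dt_K$ is only $K$-invariant), exactly as the paper does via $KK^-=K$ and $K$-equivariance of $p_1$. The paper sidesteps the normalization bookkeeping by first factoring $\wt{a}^*(h)=a^*(f)\,\wt{a}^*(\mu^K)$ and peeling off the function $f$, reducing to a pure measure identity \form{push} in which the constants $|\om_G(v)|(K)$ and $|\om^l_P(v')|(K_P)$ are tracked explicitly — slightly cleaner, but substantively the same computation.
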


\begin{proof}
(a) By definition,  $\Res_P^G([h])=\varphi_P^G([\wt{a}^*(h)])$.
Notice that the decomposition $G=\sqcup_{g\in A} Kg^{-1}P$ into open and closed subsets induces a
decomposition $\wt{G}=\sqcup_{g} \wt{G}_{g}$, where $\wt{G}_{g}:=Kg^{-1}P\overset{P}{\times}P$.
Therefore we get a decomposition $\wt{a}^*(h)=\sum_{g}\{h\}_{g}$, where $\{h\}_{g}:=\wt{a}^*(h)|_{\wt{G}_{g}}\in \C{H}(\wt{G}_g)\subset \C{H}(\wt{G})$.
Thus it remains to show that for every $g\in G$, we have the equality
$\varphi_P^G([\{h\}_{g}])=\Res_{K_{g,P}}^{K_g}([h_g])$.

Note that $g\wt{G}_{g}=gKg^{-1}P\overset{P}{\times}P=K_g\overset{K_{g,P}}{\times}P=\wt{K_g}$ and $(g^{-1})^*(\{h\}_{g})=\wt{a}^*(h_g)|_{\wt{K_g}}$.
Using the identity
%$\C{H}(\wt{G}_{K_g})$.
$[\{h\}_{g}]=[(g^{-1})^*(\{h\}_{g})]\in\C{H}(\wt{G})_G$, the equality
$\varphi_P^G([\{h\}_{g}])=\Res_{K_{g,P}}^{K_g}([h_g])$ can be rewritten as
$\varphi_P^G([\wt{a}^*(h_g)|_{\wt{K_g}}])=\Res_{K_{g,P}}^{K_g}([h_g])$.

The latter equality follows from the fact that the diagram
\[
\begin{CD}
\C{H}(\wt{G}) @>\varphi_P^G>> \C{H}(P)_{P}\\
@A(1)AA @A(2)AA\\
\C{H}(\wt{K_g}) @>\varphi_{K_{g,P}}^{K_g}>> \C{H}(P)_{K_{g,P}},
\end{CD}
\]
where (1) is the natural inclusion, while (2) is the natural projection, is commutative.

(b) Recall that $\varphi^K_{K_P}$ is a composition $\C{H}(\wt{K})_K\bisom \C{H}(K\times P)_{K\times K_P}\isom  \C{H}(P)_{K_P}$, corresponding to the diagram $\wt{K}\overset{p_1}{\lla}K\times P \overset{p_2}{\lra}P$.
Since $\dt_K\pp h_P\in\C{H}(K\times P)$ satisfies $(p_2)_!(\dt_K\pp h_P)=h_P$, it suffices to show
that $(p_1)_!(\dt_K\pp h_P)=\wt{a}^*(h)|_{\wt{K}}$.

Note that $\wt{a}^*(h)=a^*(f)\wt{a}^*(\mu^K)$, while $\dt_K\pp h_P=(1_K\pp f_P)(\dt_K\times \mu^{K_P})$.
Moreover, since $f$ is $\Ad K$-invariant, we have $p_1^*(a^*(f))=1_K\pp f_P$. Thus it remains to show the equality
$(p_1)_!(\dt_K\pp\mu^{K_P})=\wt{a}^*(\mu^K)|_{\wt{K}}$.

Fix  $0\neq v\in V_G$ and $0\neq v'\in V_P$. Using the identities $|\om_G(v)|=|\om_G(v)|(K)\cdot\mu^K$ and
$|\om^l_P(v')|=|\om^l_P(v')|(K_P)\cdot\mu^{K_P}$, it remains to show the equality
\begin{equation} \label{Eq:push}
(p_1)_!(|\om_K(v)|\pp|\om^l_P(v')|)=|\om^l_P(v')|(K_P)\cdot |\om_{\wt{K}}(v)|,
\end{equation}
where we set $\om_{\wt{K}}(v)=\om_{\wt{G}}(v)|_{\wt{K}}$. Using the notation of \re{parab}(b), we set $K^-:=K\cap U^-$, and consider the open embeddings $j:K^-\times P\hra \wt{K}$, $m:K^-\times K_P\hra K$ and $K^-\times P\hra G$. Since $p_1$ is $K$-equivariant, it remains to show the restriction of the equality \form{push} to $K^-\times P$ under $j$.

 Note that $p_1^{-1}(K^-\times P)=K^-\times K_P\times  P$, while  $j^*(\om_{\wt{K}}(v))=\om^l_{K^-\times P}(v)$ and $m^*(\om_K(v))=\om^r_{K^-\times K_P}(v)$ (by \rl{open}). Thus the assertion follows from the fact that $|\om^r_{K^-\times K_P}(v)|=|\om^l_{K^-\times K_P}(v)|$, because $K^-\times K_P$ is compact.

(c) follows immediately from (a) and (b).
\end{proof}

\begin{Emp}
\begin{proof}[Proof of \rp{indrep}]
We have to show that for every $h\in \C{H}(G)$, we have
$\chi_{\pi}(h)=\chi_\tau(\Res_P^G(h))$. Choose an open compact subgroup $K\subset G$ such that
$h$ is $K\times K$-invariant. Then $h\in\C{H}(G)^K$, so by \rl{form} we have to show that
\begin{equation}
\Tr(h,\pi)=\sum_{g\in A}\Tr(h_{g,P},\tau).
\end{equation}
Though the result is well-known and is an immediate generalization of the corresponding
result for finite groups, we sketch the argument for completeness.

Let $W$ (resp. $V$) be the space of $\tau$ (resp. $\pi$).
Then $V^K$ is the space of functions $f:G\to W$ satisfying $f(xyk)=\tau(x)(f(y))$ for all
$x\in P$, $y\in G$ and $k\in K$.
For every $g\in G$, we denote by $V^K_{g}\subset V^K$ the subspace of functions $f:G\to W$ from $V^K$,
supported on $PgK$. Then $V^K=\oplus_{g\in A}V^K_g$.

For every $g\in G$, consider the endomorphism $h_{\{g\}}:V^K_{g}\hra V^K\overset{h}{\to}V^K\surj V^K_{g}$, induced by $h$. Then
$\Tr(h,\pi)=\sum_{g\in A}\Tr(h_{\{g\}})$, so it suffices to show  the equality
$\Tr(h_{\{g\}})=\Tr(h_{g,P},W)$ for every $g\in G$.

Since $gK=K_g g$, the map
$f\mapsto f(g)$ induces a linear isomorphism $V^K_{g}\isom W^{K_{g,P}}$.
It remains to show that this isomorphism identifies
$h_{\{g\}}\in \End V^K_{g}$ with $\tau(h_{g,P})\in \End W^{K_{g,P}}$, that is, for every
$f\in V^K_{g}$, we have the equality $(h(f))(g)=\tau(h_{g,P})(f(g))$.

We claim that the latter equality holds for every right $K$-invariant $h\in \C{H}(G)$.
Indeed, we may assume that $h=\dt_{xK}$ for some $x\in G$. Set $x_g:=gxg^{-1}$. Then $h_g=\dt_{x_gK_g}$, and  $(h(f))(g)=f(gx)=f(x_g g)$. Assume first that $x_g\in PK_g$. Then $x_gK_g=x'K_g$ for some $x'\in P$.
In this case, we have $h_{g,P}=\dt_{x'K_{g,P}}$, and
\[
h(f)(g)=f(x_g g)=f(x' g)=\tau(x')(f(g))=\tau(h_{g,P})(f(g)).
\]
Finally, if $x_g\notin PK_g$, then $h(f)(g)=f(x_g g)=0$, and $h_{g,P}=0$.
\end{proof}
\end{Emp}

\begin{Emp}
{\bf Parabolic induction.} Let $\rho$ be an admissible representation of $M$. Recall that a non-normalized parabolic induction
$\pi=\un{\Ind}_{P;M}^G(\rho)$ is the induced representation $\un{\Ind}_{P}^G(\tau)$, where
$\tau\in\Rep(P)$ is the inflation of $\rho$.
\end{Emp}

\begin{Cor} \label{C:ind}
We have the equality of characters $\chi_{\pi}=\Ind_{P;M}^G(\chi_\rho)$.
\end{Cor}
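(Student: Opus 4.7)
The plan is to reduce the corollary immediately to \rp{indrep} by unwinding the definitions on both sides.

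First, from \re{main}(c) we have $\Res_{P;M}^G = p_!\circ\Res_P^G$, where $p:P\to M$ is the projection. Dualizing via \re{gen}(e), this gives $\Ind_{P;M}^G = \Ind_P^G\circ p^*$, where $p^*:\wh{C}^M(M)\to\wh{C}^P(P)$ is the continuous map dual to $p_!:\C{H}(P)_P\to\C{H}(M)_M$. On the other hand, \rp{indrep} applied to $\tau$ (the inflation of $\rho$) yields $\chi_\pi = \Ind_P^G(\chi_\tau)$. Hence the corollary will follow once we establish the identity
\[
\chi_\tau \;=\; p^*(\chi_\rho) \quad\text{in } \wh{C}^P(P).
\]

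To prove this identity, I would use the defining property of $p^*$: for every $h\in\C{H}(P)$,
\[
p^*(\chi_\rho)(h) \;=\; \chi_\rho(p_!(h)).
\]
Since $\tau$ is the inflation of $\rho$ along $p$, we have $\tau(g)=\rho(p(g))$ for every $g\in P$. Therefore
\[
\chi_\tau(h) \;=\; \Tr\!\left(\int_P \tau(g)\,dh(g)\right) \;=\; \Tr\!\left(\int_P \rho(p(g))\,dh(g)\right) \;=\; \Tr\!\left(\int_M \rho(m)\,d(p_!h)(m)\right) \;=\; \chi_\rho(p_!h),
\]
which gives exactly $\chi_\tau = p^*(\chi_\rho)$. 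Combining this with the two preceding observations,
\[
\chi_\pi \;=\; \Ind_P^G(\chi_\tau) \;=\; \Ind_P^G(p^*(\chi_\rho)) \;=\; \Ind_{P;M}^G(\chi_\rho),
\]
as desired.

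The only step that requires any thought is the interaction between pushforward of measures along $p$ and inflation of representations; there is no real obstacle, just the routine check above that $\chi_\tau = p^*(\chi_\rho)$. All the substantive work has been done in \rp{indrep}, and the corollary is essentially a formal consequence of the factorization $\Ind_{P;M}^G = \Ind_P^G\circ p^*$ together with the trivial observation that inflation of representations is dual to pushforward of smooth measures.
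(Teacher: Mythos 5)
Your proposal is correct and follows essentially the same route as the paper: the paper's one-line proof likewise reduces to Proposition~\ref{P:indrep} together with the observation that $\chi_\tau = p^*(\chi_\rho)$. You simply spell out in more detail the dualization $\Ind_{P;M}^G = \Ind_P^G\circ p^*$ and the verification that inflation corresponds to $p^*$, which the paper leaves implicit.
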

\begin{proof}
Since the character of the inflation $\chi _{\tau}\in \wh{C}^{P}(P)$ equals $p^*(\chi_\rho)$, the assertion follows from \rp{indrep}.
\end{proof}

\section{Restriction to an equal rank subgroup}

\begin{Emp} \label{E:relcomp}
{\bf Smooth measures with relatively compact support.}

(a) Let $f:X\to Z$ be a morphism of analytic varieties over $F$, where $X$ is smooth. We denote by
$\C{H}(X/Z)\subset \C{M}^{\infty}(X)$ the subspace consisting of measures, whose support is proper
over $Z$. In particular, $\C{H}(X/X)=\C{M}^{\infty}(X)$. Notice that $\C{H}(X)\subset \C{H}(X/Z)$, if $Z$ is Hausdorff, and
$\C{H}(X)=\C{H}(X/Z)$, if $Z$ is compact.

(b) Every smooth morphism $f:X\to Y$ over $Z$ induces a canonical push-forward map $f_!:\C{H}(X/Z)\to \C{H}(Y/Z)$. Indeed,
we can construct the map $f_!$ locally on $Z$, thus may assume that $Z$ is compact. Now the assertion follows from
\rl{push}.

(c) Assume that we are in the case of \re{ind}, and that $H$ acts on $X$ over $Z$, that is, the map $f:X\to Z$ is $H$-equivariant with respect to the
trivial action of $H$ on $Z$. Then the arguments of (b) and \re{ind} imply that we have a natural isomorphism $\varphi_H^G:\C{H}(\Ind_H^G(X)/Z)_G\isom \C{H}(X/Z)_H$.

(d) Assume that we are given a commutative diagram of analytic spaces over $F$
\[
\begin{CD}
X @>>>Y\\
@VbVV @VVV\\
X'@>>> Y',
\end{CD}
\]
such that $X$ and $X'$ are smooth, $b$ is a local isomorphism, and the induced map $X\to X'\times_{Y'} Y$ is proper.
Then the pullback $b^*:\C{M}^{\infty}(X')\to\C{M}^{\infty}(X)$ (see \re{pullback}(b)) satisfies
$b^*(\C{H}(X'/Y'))\subset\C{H}(X/Y)$.

(e) An important particular case of (d) is when the diagram is Cartesian and $Y\to Y'$ is an open embedding.
In this case,  $b:X\to X'$ is an open embedding as well, and we denote $b^*$ by $\res$ and call it
{\em the restriction map}.
\end{Emp}

\begin{Emp} \label{E:alg}
{\bf The algebraic case.}
%(a) Let $f:X\to Z$ be a morphism of algebraic varieties over $F$, where $X$ is smooth. To shorten the notation, we will
%write $\C{H}(X)$ (resp. $\C{H}(X/Z)$) instead of $\C{H}(X)$ (resp. $\C{H}(X/Z)$). Moreover, if an algebraic group
%$G$ acts  $X$ (resp. over $Z$), we will write  $\C{H}(X)_G$ (resp. $\C{H}(X/Z)_G$ instead of $\C{H}(X)_{G}$ (resp. $\C{H}(X/Z)_{G}$)).
%
(a) Let $\H\subset \G$ be a closed subgroup. Since the projection map $\G\to \G/\H$ is smooth,
 every $G$-orbit in $(\G/\H)(F)$ is open. Therefore $G/H=\G(F)/\H(F)$ is an open and closed subset of
$(\G/\H)(F)$.

(b) Let $\H$ act on a smooth algebraic variety $\X$. Then, by (a), the induced space
$\Ind^G_H(X)$ is an open and closed subset of
$(\Ind^\G_\H(\X))(F)$. Hence we have a natural restriction map
$\res:\C{H}((\Ind_\H^\G(\X))(F))_G\to \C{H}(\Ind_{H}^{G}(X))_G$, and we denote  the composition $\varphi_{H}^{G}\circ\res:
\C{H}((\Ind_\H^\G(\X))(F))_G\to \C{H}(X)_H$ simply by $\varphi_{H}^{G}$.

(c) Assume that  $\H$ acts on $\X$ over $\Z$ (compare \re{relcomp}(c)).
Then, generalizing (b),  we have a  map $\varphi^G_H: \C{H}((\Ind_\H^\G(\X))(F)/Z)_G\to \C{H}(X/Z)_H$.
\end{Emp}

\begin{Emp} \label{E:notrestr}
{\bf Notation.} Let $\K\subset \H$ be two closed subgroups of $\G$.

(a) We set $\C{H}(H)^{\reg/G}:=\C{H}(H^{\reg/G}/c_H^{\reg/G})$,
where $H^{\reg/G}=\H^{\reg/\G}(F)$ and $c_H^{\reg/G}=\c_\H^{\reg/\G}(F)$ (see \re{reg}(b)).
Similarly, we set $\C{H}(\wt{G}_H)^{\reg}:=\C{H}(\wt{G}_H^{\reg}/c_H^{\reg/G})$ and
$\C{H}(\wt{H}_K)^{\reg/G}:=\C{H}(\wt{H}_K^{\reg/G}/c_K^{\reg/G})$
(see \re{simple}(e)).

(b) By a combination of \re{relcomp}(a) and \re{relcomp}(e), we have a restriction map
\[\res:\C{H}(H)\hra \C{H}(H/c_H)\to \C{H}(H)^{\reg/G}.\]

(c) Recall that the map $a_{\H,\G}:\wt{\G}^{\reg}_\H\to \G$  is
$\G$-equivariant and \'etale (see \rco{etale}); thus the corresponding
map $a_{H,G}:\wt{G}^{\reg}_H\to G$ is a local isomorphism. Hence we have a pullback map
$a_{H,G}^*:\C{M}^{\infty}(G)\to\C{M}^{\infty}(\wt{G}^{\reg}_H)$  (see \re{pullback}(b)).
\end{Emp}

\begin{Emp} \label{E:restr}
{\bf The restriction map.} Assume that we are in the case of \re{eqrk}.

(a) Recall that the morphism of algebraic varieties $\iota_{\H,\G}:\wt{\G}^{\reg}_\H\to \G\times_{\c_\G}\c^{\reg/\G}_\H$ is finite (by \rco{eqrank}). Therefore the induced morphism of analytic varieties $\iota_{H,G}:\wt{G}^{\reg}_H\to G\times_{c_G}c^{\reg/G}_H$ is proper. Thus, by \re{notrestr}(c) and \re{relcomp}(d), we have a pullback map $a_{H,G}^*:\C{H}(G)_G\to\C{H}(G/c_G)_G\to \C{H}(\wt{G}_H)_G^{\reg}$.

(b) We denote by $R^G_H:\C{H}(G)_G\to \C{H}(\wt{G}_H)_G^{\reg}\to \C{H}(H)^{\reg/G}_H$, the composition of
the map $a_{H,G}^*$ from (a) and the map $\varphi_H^G$ from \re{alg}(c).
\end{Emp}

\begin{Emp} \label{E:setup}
{\bf Set-up.} Assume that we are in the case of \re{main}, and that $\H$ is a connected equal rank subgroup of $\M$,
hence also of $\G$.

(a) Then $\c_\H^{\reg/\G}\subset \c_\H^{\reg/\M}$ is an open subscheme (see \re{reg}(c)), and thus we have defined a restriction map
$\res:\C{H}(H)^{\reg/M}_H\to \C{H}(H)^{\reg/G}_H$ (see \re{relcomp}(e)).

(b) Using \re{reg}(c) again, we conclude that $\H^{\reg/\G}\subset \P^{\reg/\G}$. Thus
the restriction of $\Dt_{\P,\G}$ to $\H^{\reg/\G}$ is non-vanishing. Hence $\Dt_{\P,\G}$ gives rise to a smooth function
$|\Dt_{P,G}|\in\C{M}^{\infty}( H^{\reg/G})$.
\end{Emp}

\begin{Lem} \label{L:res}
In the case of \re{setup},  the following diagram is commutative:
\begin{equation} \label{Eq:genlevi}
\begin{CD}
\C{H}(G)_{G} @>R_H^G>> \C{H}(H)^{\reg/G}_H\\
  @V\Res_{P;M}^GVV   @A|\Dt_{P,G}|\cdot {\res}AA\\
\C{H}(M)_{M}  @>R_H^M>> \C{H}(H)^{\reg/M}_H.
\end{CD}
\end{equation}
\end{Lem}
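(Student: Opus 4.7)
My plan is to chase a measure $h\in\C{H}(G)_G$ around the diagram \form{genlevi}, express both compositions as explicit smooth measures on $H^{\reg/G}$, and verify their equality using \rl{meas} as the principal computational input.

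The first step is to assemble a commutative diagram of smooth algebraic varieties interpolating both paths. Using \re{simple}(d), the map $a_{\H,\G}$ factors as $a_{\P,\G}\circ a_{\H,\P}$, and by \rco{etale}, the restriction $a_{\H,\P}:\wt{\G}_\H^{\reg/G}\to\wt{\G}_\P$ is \'etale. Applying \rl{meas} along $a_{\H,\G}$ and $a_{\P,\G}$, the Jacobian of $a_{\H,\P}$ at a point $[1,h_0]$ with $h_0\in H^{\reg/G}$ equals $\Dt_{H,G}(h_0)/\Dt_{P,G}(h_0)=\Dt_{H,P}(h_0)$, where the last equality uses \re{reg}(c) for the chain $\H\subset\P\subset\G$. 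In parallel, the projection $p:\P\to\M$ fits into a commutative square with $a_{\H,\P}:\wt{\P}_\H\to\P$ and $a_{\H,\M}:\wt{\M}_\H\to\M$, which after applying the induction $\Ind_\P^\G$ relates the pushforward along $p$ to the pullback along $a_{\H,\M}$ on the $\reg/G$-locus.

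Second, I trace $h$ explicitly through both paths. The top-right path $R_H^G$ applies the \'etale measure pullback $a_{H,G}^*$, which by \rl{meas} absorbs the Jacobian $|\Dt_{H,G}|$, and descends via $\varphi_H^G$. The bottom-left path applies the normalized pullback $\wt{a}^*$ from \re{main}(a) (which, by design, does not absorb a Jacobian), descends via $\varphi_P^G$, pushes forward along $p_!$ over the $U$-fibers, pulls back via $a_{H,M}^*$ absorbing $|\Dt_{H,M}|$, descends via $\varphi_H^M$, restricts to $H^{\reg/G}$, and multiplies by $|\Dt_{P,G}|$. Using the identity $\Dt_{H,G}=\Dt_{H,P}\cdot\Dt_{P,G}$ from \re{reg}(c) together with the modular-function contribution from the $U$-integration in $p_!$ (which accounts for the discrepancy between $\Dt_{H,P}$ and $\Dt_{H,M}$ coming from the action on $\Lie\U$), the extra Jacobian factors combine correctly to leave precisely $|\Dt_{P,G}|$, matching the right vertical arrow of \form{genlevi}.

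The main obstacle is the careful bookkeeping of the distinct kinds of pullback maps in play: the \'etale measure pullbacks used in $R_H^G$ and $R_H^M$, versus the normalized canonical-bundle pullback $\wt{a}^*$ used in $\Res_{P;M}^G$. In addition, one must track the modular-function contribution from the $U$-integration in $p_!$ and consistently apply the left-invariance conventions of \re{right} when passing between left-invariant forms on $\wt{G}_P$, $P$, $M$, and $H$. Verifying that all these contributions combine to produce exactly $|\Dt_{P,G}|$ rather than a closely related variant is the technical heart of the argument.
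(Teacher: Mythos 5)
The overall strategy—unwinding both compositions into explicit measures and comparing Jacobians using \rl{meas}—is viable, and you correctly distinguish the two kinds of pullback in play (the left-invariant one in $\wt{a}^*$ versus the local-isomorphism one in $a_{H,G}^*$). You also correctly realize that the factor $|\Dt_{P,G}|$ on the right vertical arrow must be reconciled with the factors $|\Dt_{H,G}|$ and $|\Dt_{H,M}|$ absorbed in the horizontal arrows. However, the key bookkeeping step has a genuine error. The missing factor you need to account for is $\Dt_{\H,\G}/(\Dt_{\H,\M}\cdot\Dt_{\P,\G}|_\H)=\Dt_{\M,\P}|_\H$ (by two applications of \re{reg}(c)), and $\Dt_{\M,\P}(m)=\det(\Ad m^{-1}-1,\,\Lie\U)$. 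You attribute this factor to ``the modular-function contribution from the $U$-integration in $p_!$.'' But $\U$ is unipotent, so $\la_{\U}\equiv 1$ and the fiber integration $p_!$ contributes no modular-function factor at all; moreover, $\Dt_{\M,\P}$ is a $\det(\Ad-\Id)$ determinant, not a $\det(\Ad)$ determinant, so even restricting the modular function of $\P$ to $\M$ does not give the quantity you need. The actual source of $\Dt_{\M,\P}$ is the Jacobian of the conjugation map $f_m:\U\to\U,\ u\mapsto m^{-1}umu^{-1}$, equivalently of the isomorphism $a_{\M,\P}:\wt{\P}_\M^{\reg/\G}\isom\P^{\reg/\G}$ supplied by \rl{eqrk}(a),(e); that Jacobian equals $\Dt_{\M,\P}$ via \rl{meas} and \re{reg}(c), and it is precisely this isomorphism that makes $p_!$ and the geometric pullback $a_{M,P}^*$ compatible. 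As written, your argument has no mechanism producing the needed factor, so it would not close.

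For comparison, the paper avoids this bookkeeping by first factoring the diagram through the Levi, i.e.\ inserting the intermediate map $R_M^G$: the right half of the resulting diagram \form{gl1} commutes by plain functoriality of restriction, so the whole statement reduces to the case $\H=\M$. The left half is then handled by the auxiliary diagrams \form{gl2} and \form{gl3}: \form{gl2} uses $p^{-1}(\M^{\reg/\G})=\P^{\reg/\G}$ from \rl{eqrk}(d), the isomorphism $a_{\M,\P}^*$ from \rl{eqrk}(e), and $\varphi_M^P=p_!$ from \re{ind}(e); and the top-left square of \form{gl3} is exactly where \rl{meas} and the factorization $a_{\M,\G}=a_{\M,\P}\circ a_{\P,\G}$ from \re{simple}(d) enter, producing the identity $|\Dt_{M,G}|=|\Dt_{P,G}|\cdot|\Dt_{M,P}|$ cleanly. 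If you want to proceed by a direct chase rather than reducing to $\H=\M$, you would still need to isolate and justify the $\Dt_{\M,\P}$ factor via \rl{eqrk}(e) (or equivalently via the Jacobian of $f_m$); the modular function of $\U$ cannot be the source.
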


\begin{proof} The assertion follows from a rather straightforward diagram chase. Namely, using the inclusion $H^{\reg/G}\subset M^{\reg/G}$, we observe that diagram \form{genlevi} decomposes as
\begin{equation} \label{Eq:gl1}
\begin{CD}
\C{H}(G)_{G} @>R_M^G>> \C{H}(M)^{\reg/G}_M @>R_H^M>> \C{H}(H)^{\reg/G}_H\\
  @V\Res_{P;M}^GVV   @A|\Dt_{P,G}|\cdot {\res}AA       @A|\Dt_{P,G}|\cdot {\res}AA\\
\C{H}(M)_{M}  @= \C{H}(M)_M  @>R_H^M>> \C{H}(H)^{\reg/M}_H.
\end{CD}
\end{equation}
Since the right-inner square of \form{gl1} is commutative by functoriality, it remains to show the commutativity of the left-inner square.

Observe that the diagram
\begin{equation} \label{Eq:gl2}
\begin{CD}
\C{H}(P)_{P} @>\res >>\C{H}(P)^{\reg/G}_P @>a_{M,P}^*>> \C{H}(\wt{P}_M)^{\reg/G}_P\\
  @Vp_!VV   @Vp_!VV       @V\varphi_M^P VV\\
\C{H}(M)_{M}  @>\res >>\C{H}(M)^{\reg/G}_M  @=\C{H}(M)^{\reg/G}_M
\end{CD}
\end{equation}
is commutative. Indeed, the left-inner square of \form{gl2} is commutative by \rl{eqrk}(d), and the right-inner square is commutative, because $\varphi_M^P=p_!$ (see \re{ind}(e)) and $a_{P,M}^*$ is an isomorphism (by \rl{eqrk}(e)).

Therefore the left-inner square of \form{gl1} decomposes as
\begin{equation} \label{Eq:gl3}
\begin{CD}
 \C{H}(G)_{G} @=\C{H}(G)_{G} @=\C{H}(G)_{G} \\
@V\wt{a}_{P,G}^* VV @Va_{M,G}^*VV @V R_M^G VV\\
 \C{H}(\wt{G}_P)_{G} @>|\Dt_{P,G}|\cdot a_{M,P}^*>> \C{H}(\wt{G}_M)^{\reg}_G @>\varphi_M^G>> \C{H}(M)^{\reg/G}_M\\
@V\varphi_P^G VV @V\varphi_P^GVV @|\\
 \C{H}(P)_P @>|\Dt_{P,G}|\cdot a_{M,P}^*>>\C{H}(\wt{P}_M)^{\reg/G}_P @>\varphi_M^P>> \C{H}(M)^{\reg/G}_M.
\end{CD}
\end{equation}
We claim that all inner squares of \form{gl3} are commutative. Indeed, the top right square is commutative by the definition of $R_M^G$, the bottom left square is commutative by the functoriality of $\varphi_P^G$, and the bottom right square is commutative by the equality $\varphi_M^P\circ \varphi_P^G=\varphi_M^G$.

Finally, the commutativity of the top left square of \form{gl3} follows from the equality $a_{M,G}=a_{M,P}\circ a_{P,G}$ (see \re{simple}(d))
and \rl{meas}.
\end{proof}

\section{Normalized induction, independence of $\P$, and stability}

\begin{Emp} \label{E:normcoind}
{\bf Normalized restriction and induction.}

(a) Recall that in the construction of the restriction map $\Res_{P;M}^G$ in \re{main}(c) we used the
isomorphism $\C{O}_{\wt{\G}}\otimes_F V_\G\isom \C{K}_{\wt{\G}}:v\mapsto \om_{\wt{\G}}(v)$
(see \re{canbun}(c)). Instead we could use the isomorphism $\C{O}_{\wt{\G}}\otimes_F V_\G\isom \C{K}_{\wt{\G}}:v\mapsto \om_{\wt{\G}}^r(v)$ (see \re{right}(b)). Since  $\om_{\wt{\G}}^r(v)=\la_{\wt{\G}}\cdot\om_{\wt{\G}}(v)$, the resulting restriction map would be $|\la_P|\cdot  \Res_{P;M}^G$.

(b) We denote by $r_{P;M}^G:\C{H}(G)_G\to \C{H}(M)_M$, the map $|\la_P|^{1/2}\cdot  \Res_{P;M}^G$, and call it
the {\em normalized restriction map}. Let  $i_{P;M}^{G}:\wh{C}^{M}(M)\to \wh{C}^{G}(G)$ be the dual map of
$r_{P;M}^G$, called the {\em normalized induction map}. Explicitly, $i_{P;M}^{G}(\chi)=\Ind_{P;M}^G(|\la_P|^{1/2}\cdot\chi)$ for every $\chi\in\wh{C}^{M}(M)$.

(c) For an admissible representation $\rho$ of $M$, we denote by
$\un{i}_{P;M}^G(\rho)$ the representation $\un{\Ind}_{P;M}^G(\rho\otimes|\la_P|^{1/2})$ of $G$ and call it {\em the normalized parabolic induction}.
\end{Emp}

\begin{Emp}
{\bf Remark.}  If $\G$ is semisimple and simply connected, and $\P$ is a Borel subgroup of $\G$, then the normalized restriction map $r_{P;M}^G$ has a geometric interpretation.

Indeed, in this case, the homomorphism
$\la_{\P}:\P\to\B{G}_m$ has a unique square root $\la_{\P}^{1/2}:\P\to\B{G}_m$. Furthermore, $\la_{\P}^{1/2}$ gives rise to a morphism
$\la_{\wt{\G}}^{1/2}:\wt{\G}\to\B{G}_m$ (by \re{simple}(a), hence to an isomorphism $\C{O}_{\wt{\G}}\otimes_F V_\G\isom \C{K}_{\wt{\G}}:v\mapsto \la_{\wt{\G}}^{1/2}\cdot\om_{\wt{\G}}(v)$, and $r_{P;M}^G$ is obtained from this isomorphism by the construction \re{main}.
\end{Emp}

The following result follows immediately from \rco{ind}.

\begin{Cor} \label{C:ind'}
We have the equality of characters $\chi_{\un{i}_{P;M}^G(\rho)}=i_{P;M}^G(\chi_\rho)$.
\end{Cor}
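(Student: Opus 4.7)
The plan is to unwind the definitions in \re{normcoind} and invoke \rco{ind} directly; no new geometric input is required. First observe that $\la_\P:\P\to\B{G}_m$ is an algebraic character of $\P$, hence trivial on $[\P,\P]\supset\U$. Consequently the function $|\la_P|:P\to\B{R}^{\times}$ factors through the projection $p:P\to M$, and so does its square root $|\la_P|^{1/2}$. In particular $|\la_P|^{1/2}$ may be regarded as a character of $M$, and the twist $\rho':=\rho\otimes |\la_P|^{1/2}$ is an admissible representation of $M$ whose character equals $\chi_{\rho'}=|\la_P|^{1/2}\cdot\chi_{\rho}$.

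Next, by the definition in \re{normcoind}(c), we have the identification
\[
\un{i}_{P;M}^G(\rho)=\un{\Ind}_{P;M}^G(\rho\otimes|\la_P|^{1/2})=\un{\Ind}_{P;M}^G(\rho').
\]
Applying \rco{ind} (proved in the previous section) to the representation $\rho'$ in place of $\rho$ yields
\[
\chi_{\un{i}_{P;M}^G(\rho)}=\chi_{\un{\Ind}_{P;M}^G(\rho')}=\Ind_{P;M}^G(\chi_{\rho'})=\Ind_{P;M}^G\bigl(|\la_P|^{1/2}\cdot\chi_{\rho}\bigr).
\]
By the definition of the normalized induction map in \re{normcoind}(b), the right hand side equals $i_{P;M}^G(\chi_{\rho})$, which gives the desired equality.

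There is no real obstacle: all of the work has been absorbed into \rco{ind}, and the present corollary merely verifies that the $|\la_P|^{1/2}$-normalization on the representation side (used to define $\un{i}_{P;M}^G$) matches the $|\la_P|^{1/2}$-normalization on the character side (used to define $i_{P;M}^G$). The only minor point worth making explicit is that $|\la_P|^{1/2}$ makes sense simultaneously as a character of $P$ (needed to form the representation $\rho\otimes|\la_P|^{1/2}$ in the non-normalized induction) and as a character of $M$ (needed to multiply the invariant generalized function $\chi_\rho$ on $M$); this is precisely the content of the first paragraph.
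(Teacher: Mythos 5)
Your proof is correct and takes essentially the same approach as the paper: the paper simply declares the corollary to follow immediately from \rco{ind}, and you have supplied precisely the routine unwinding — identifying the $|\la_P|^{1/2}$-twist in the definition of $\un{i}_{P;M}^G$ (\re{normcoind}(c)) with the $|\la_P|^{1/2}$-multiplication in the definition of $i_{P;M}^G$ (\re{normcoind}(b)) — that the phrase ``follows immediately'' leaves to the reader.
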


Next, we write a version of \rl{res} for the normalized restriction.

\begin{Cor} \label{C:resnorm}
In the case of \re{setup},  the following diagram is commutative:
\begin{equation} \label{Eq:genlevi'}
\begin{CD}
\C{H}(G)_{G} @>R_H^G>> \C{H}(H)^{\reg/G}_H\\
  @Vr_{P;M}^GVV   @A|\Dt_{M,G}|^{1/2}\cdot {\res}AA\\
\C{H}(M)_{M}  @>R_H^M>> \C{H}(H)^{\reg/M}_H.
\end{CD}
\end{equation}
\end{Cor}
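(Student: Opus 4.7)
My approach is to deduce Corollary \ref{C:resnorm} directly from Lemma \ref{L:res} by inserting the normalization factor. By Claim \ref{C:parab} we have the identity
\[
|\Dt_{P,G}| = |\Dt_{M,G}|^{1/2}\cdot|\lambda_P|^{1/2}
\]
as smooth functions on the regular locus in $M$. Combined with the definition $r_{P;M}^G = |\lambda_P|^{1/2}\cdot\Res_{P;M}^G$ from \re{normcoind}(b), the corollary will follow once I establish that multiplication by $|\lambda_P|^{1/2}$ can be moved across the composition $\res\circ R_H^M$ appearing on the right side of diagram \form{genlevi}.

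The key compatibility to verify is: for every $\Ad M$-invariant smooth function $f$ on $M$ and every $\eta\in\C{H}(M)_M$,
\[
R_H^M(f\cdot\eta) = (f|_H)\cdot R_H^M(\eta),
\]
together with the analogous (and immediate) identity for $\res$. This applies to $f = |\lambda_P|^{1/2}$, which is $\Ad M$-invariant since $\lambda_P$ is a character of $P$ and $M\subset P$. To prove the identity I would use the factorization $R_H^M = \varphi_H^M\circ a_{H,M}^*$ from \re{restr}. Because $a_{H,M}([g,h]) = ghg^{-1}$ and $f$ is $\Ad M$-invariant, the pullback $a_{H,M}^*(f)$ is the function $[g,h]\mapsto f(h)$ on $\wt{M}_H^{\reg}$. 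The map $\varphi_H^M$ is characterized in \re{ind}(c) by $\varphi_H^M\circ(p_1)_! = (p_2)_!$ for the diagram $\wt{M}_H\overset{p_1}{\lla}M\times H\overset{p_2}{\lra}H$. Since the functions $(p_1)^*(a_{H,M}^*f)$ and $(p_2)^*(f|_H)$ both equal $(g,h)\mapsto f(h)$ on $M\times H$, the projection formula for $(p_1)_!$ and $(p_2)_!$ yields $\varphi_H^M\bigl((a_{H,M}^*f)\cdot\mu\bigr) = f|_H\cdot\varphi_H^M(\mu)$, as required.

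Putting everything together,
\begin{align*}
R_H^G &= |\Dt_{P,G}|\cdot\res\circ R_H^M\circ\Res_{P;M}^G \\
&= |\Dt_{M,G}|^{1/2}\cdot|\lambda_P|^{1/2}\cdot\res\circ R_H^M\circ\Res_{P;M}^G \\
&= |\Dt_{M,G}|^{1/2}\cdot\res\circ R_H^M\circ\bigl(|\lambda_P|^{1/2}\cdot\Res_{P;M}^G\bigr) \\
&= |\Dt_{M,G}|^{1/2}\cdot\res\circ R_H^M\circ r_{P;M}^G,
\end{align*}
which is the commutativity of \form{genlevi'}. The only non-formal step is the commutation identity for $R_H^M$; once that is granted, the remaining manipulation is pure bookkeeping, so I expect this identity to be the main (though quite minor) obstacle.
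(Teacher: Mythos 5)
Your proposal is correct and follows essentially the same route as the paper: the paper's proof consists exactly of invoking Claim~\ref{C:parab} to get $|\Dt_{M,G}|^{1/2}=|\Dt_{P,G}|\cdot|\la_P|^{-1/2}$ and then citing Lemma~\ref{L:res}, tacitly cancelling the $|\la_P|^{\pm1/2}$ factors across $\res\circ R_H^M$. You spell out the one step the paper leaves implicit, namely that $R_H^M$ intertwines multiplication by an $\Ad M$-invariant function $f$ on $M$ with multiplication by $f|_H$ on $H$, and your verification via $R_H^M=\varphi_H^M\circ a_{H,M}^*$ and the projection formula for $(p_1)_!,(p_2)_!$ is sound.
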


\begin{proof}
By \rcl{parab}, we have the identity $|\Dt_{M,G}|^{1/2}=|\Dt_{P,G}|\cdot|\la_P|^{-1/2}$, so
the assertion follows from \rl{res}.
\end{proof}

\begin{Lem} \label{L:inject}
The restriction map $\res: \C{H}(M)_M\to \C{H}(M)^{\reg/G}_M$ from \re{notrestr}(b) is injective.
\end{Lem}
\begin{proof}
By definition, the connected center $\Z(\M)^0\subset \Z(\M)$ acts on $\Lie \G/\Lie \M$ by a direct sum of non-trivial characters.
Therefore for every $m\in \M$, the locus of $z\in \Z(\M)$ such that $zm\in \M^{\reg/\G}$, is open and Zariski dense.
Similarly, the action of $\Z(\M)$ on $\M$ induces an action of $\Z(\M)$ on $\c_\M$, and for every $m\in \c_\M$, the locus of $z\in \Z(\M)$ such that $z(m)\in \c_\M^{\reg/\G}$, is open and Zariski dense. Thus, for every open subgroup $U\subset Z(M)$ we have $U(c_M^{\reg/G})=c_M$.

Note that $Z(M)$ acts smoothly on $\C{H}(M)$ and induces a smooth action on $\C{H}(M)_M$. Fix a non-zero $h\in \C{H}(M)_M$, and let $U\subset Z(M)$ be the  stabilizer of $h$. Since $U(c_M^{\reg/G})=c_M$, we conclude that $h|_{c_M^{\reg/G}}\neq 0$, thus $\res(h)\neq 0$.
\end{proof}

Now we show that the normalized induction map does not depend on $\P$.

\begin{Cor} \label{C:unit}
(a) The normalized restriction map $r_{P;M}^G:\C{H}(G)_G\to\C{H}(M)_M$ and the normalized induction map
$i_{P;M}^G:\wh{C}^{M}(M)\to\wh{C}^{G}(G)$ do not depend on $\P$.

(b) For every admissible representation $\rho$ of $M$, the set of composition factors of $\un{i}_{P;M}^G(\rho)$ does not depend on
$\P$.
\end{Cor}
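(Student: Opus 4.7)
The strategy I propose is to specialize the commutative diagram of \rco{resnorm} to the case where the equal rank subgroup $\H$ equals $\M$ itself. First I would verify that in this case $R_M^M$ collapses to the identity on $\C{H}(M)_M$. Indeed, $\Dt_{\M,\M}$ is the determinant of an endomorphism on the zero-dimensional space $\Lie\M/\Lie\M$, hence constantly equal to $1$, so $\M^{\reg/\M}=\M$; the variety $\wt{\M}_\M=\M\overset{\M,\Ad}{\times}\M$ is canonically identified with $\M$ via $[g,m]\mapsto gmg^{-1}$, under which $a_{\M,\M}$ becomes $\Id_\M$; and the descent isomorphism $\varphi_M^M$ for the trivial induction is itself the identity. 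Putting these together, diagram \form{genlevi'} for $\H=\M$ collapses to
\[
R_M^G = |\Dt_{M,G}|^{1/2}\cdot \res \circ r_{P;M}^G,
\]
where $\res:\C{H}(M)_M \to \C{H}(M)^{\reg/G}_M$ is the restriction map.

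The crucial observation is that the left-hand side $R_M^G$ is constructed from the equal rank inclusion $\M\subset\G$ alone, with no input from $\P$; hence the right-hand side must also be independent of $\P$. Since $|\Dt_{M,G}|^{1/2}$ is a nowhere-vanishing smooth function on $M^{\reg/G}$, multiplication by it is an invertible endomorphism of $\C{H}(M)^{\reg/G}_M$ (with inverse given by multiplication by $|\Dt_{M,G}|^{-1/2}$). Consequently $\res \circ r_{P;M}^G$ is independent of $\P$, and the injectivity of $\res$ provided by \rl{inj} then immediately yields the independence of $r_{P;M}^G$ itself from $\P$. By passing to linear duals, $i_{P;M}^G$ is also independent of $\P$, establishing (a).

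For (b) I would invoke \rco{ind'}, which gives $\chi_{\un{i}_{P;M}^G(\rho)}=i_{P;M}^G(\chi_\rho)$; by part (a) the right-hand side does not depend on $\P$, so neither does the character on the left. The conclusion about composition factors then follows from the standard fact that an admissible representation of $G$ is determined up to its class in the Grothendieck group by its character, so equality of characters forces equality of the multisets of composition factors.

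The only step I expect to demand any real care is the bookkeeping that shows $R_M^M=\Id$: one has to trace through each piece of the definition in \re{restr}(b) (the pullback $a_{M,M}^*$ and the descent $\varphi_M^M$) and confirm that each degenerates trivially when $\H=\M$. Once this is in hand, the rest of the argument is a purely formal consequence of \rco{resnorm}, \rl{inj}, and \rco{ind'}.
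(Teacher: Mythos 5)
Your proposal is correct and matches the paper's proof essentially step for step: both specialize \rco{resnorm} to $\H=\M$, observe that $R_M^G$ is manifestly $\P$-independent so that $\res\circ r_{P;M}^G=|\Dt_{M,G}|^{-1/2}\cdot R_M^G$ is as well, and then conclude from the injectivity of $\res$ in \rl{inj}; part (b) is handled identically via \rco{ind'}. The one cosmetic imprecision is calling $R_M^M$ ``the identity on $\C{H}(M)_M$'' — its target is $\C{H}(M)^{\reg/M}_M=\C{H}(M/c_M)_M$, so it is really the natural inclusion of compactly supported measures into relatively compactly supported ones — but since you then fold it into the composite $\res:\C{H}(M)_M\to\C{H}(M)^{\reg/G}_M$, exactly as in \re{notrestr}(b), this does not affect the argument.
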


\begin{proof}
(a) It suffices to show the assertion for $r_{P;M}^G$. By \rl{inject}, it suffices to show that the composition
$\res\circ r_{P;M}^G:\C{H}(G)_{G}\to \C{H}(M)^{\reg/G}_M$ does not depend on $\P$.
On the other hand, by \rco{resnorm} for $\H=\M$, this composition equals $|\Dt_{M,G}|^{-1/2}\cdot R_M^G$.

(b) It is enough to show that the character of $\un{i}_{P;M}^G(\rho)$ does not depend on
$\P$. But this follows from (a) and \rco{ind'}.
\end{proof}

\begin{Emp}
{\bf Notation.}
By \rco{unit}(a), we can now denote  $r_{P;M}^G$ and $i_{P;M}^G$, simply by $r_{M}^G$ and $i_{M}^G$, respectively.
\end{Emp}

 \begin{Emp} \label{E:orb}
{\bf (Stable) orbital integrals.} Let $\G^{\rss}\subset \G$ be the set of regular semisimple elements.

(a) Apply the notation of \re{restr} when $\H=\S\subset \G$ is a maximal torus defined over $F$.
In this case, $\c_{\S}^{\reg/\G}$ equals $\S^{\rss}_\G:=\S\cap \G^{\rss}$, and $R_S^G$ is the map $\C{H}(G)\to\C{M}^{\infty}(S^{\rss}_G)$.
We denote by $O_{S;G}:\C{M}^{\infty}(S^{\rss}_G)^*\to \wh{C}^{G}(G)$ the dual of $R_S^G$ and call it the {\em orbital integral} map.

(b) Explicitly, let $\pr:(\G/\S)\times \S_\G^{\rss}\to  \S_\G^{\rss}$ be the projection.
Since $\S$ acts on $\S^{\rss}_\G$ trivially, it follows from \re{ind}(d) that
$R_S^G$ equals the composition
\[\C{H}(G)_{G}\overset{a_{S,G}^*}{\lra} \C{H}([(G/S)\times S_G^{\rss}]/S_G^{\rss})_{G}\overset{\pr_!}{\lra} \C{M}^{\infty}(S_G^{\rss}).
\]

(c) Denote by $(R^S_G)^{\st}:\C{H}(G)_{G}\to \C{M}^{\infty}(S_G^{\rss})$ the composition
\[\C{H}(G)_{G}\overset{a_{S,G}^*}{\lra} \C{H}([(\G/\S)(F)\times S_G^{\rss}]/S_G^{\rss})_{G}\overset{\pr_!}{\lra} \C{M}^{\infty}(S_G^{\rss}),
\]
and let  $O^{\st}_{S;G}:\C{M}^{\infty}(S_G^{\rss})^*\to \wh{C}^{G}(G)$ be the dual map, called
the {\em stable orbital integral.}
\end{Emp}

\begin{Emp} \label{E:compar}
{\bf Comparison.}
(a) Let $[g_1],\ldots,[g_n]\in (\G/\S)(F)$ be a set of representatives of the set of $G$-orbits.
For every $j$, let $\S_j\subset \G$ be the stabilizer of $[g_j]$. Then $\S_j$ is a maximal torus
of $\G$, and there is a canonical isomorphism $i_j:\S\isom \S_j$. Explicitly, if
$g_i\in \G(\ov{F})$ is any representative of $[g_i]$, then $i_j$ is the map $s\mapsto g_jsg^{-1}_j$.
By construction, we have the equality
%$(R^S_G)^{\st}=\sum_{j+1}^n i_j^*\circ R^{S_j}_G:\C{H}(G)_{G}\to \C{M}^{\infty}(S_G^{\rss})$,
$O^{\st}_{S;G}=\sum_{j=1}^n O_{S_j;G}\circ (i_j)_*$.

(b) Fix a Haar measure $|\om_S|$ on $S$. Then for every $\gm\in S^{\rss}_G$, we can consider a ``$\dt$-function" $\dt_{\gm}\in\C{M}^{\infty}(S_G^{\rss})^*$, defined by the formula $\dt_{\gm}(f|\om_S|)=f(\gm)$ for every
$f\in C^{\infty}(S_G^{\rss})$. Then the construction \re{orb} gives us ``classical" (stable) orbital integrals
$O_{\gm,G}:=O_{S,G}(\dt_{\gm})\in \wh{C}^{G}(G)$ (resp. $O^{\st}_{\gm,G}:=O^{\st}_{S,G}(\dt_{\gm})\in  \wh{C}^{G}(G)$).
For example, observation (a) implies that a stable orbital integral is a sum of orbital integrals.
\end{Emp}

\begin{Emp} \label{E:orbapl}
{\bf Application.} Let $U\subset S^{\rss}_G$ be a dense subset, and let $h\in\C{H}(G)_G$ be such that $O_{\gm,G}(h)=0$ for every $\gm\in U$.
Then  $O_{\gm,G}(h)=0$ for every $\gm\in S^{\rss}_G$.

Indeed, let $f\in C^{\infty}(S_G^{\rss})$ such that $R_G^S(h)=f|\om_S|$ (see \re{compar}(b)).
Then for every $\gm\in S^{\rss}_G$, we have $O_{\gm,G}(h)=f(\gm)$. Hence, by assumption, we have $f(\gm)=0$ for all $\gm\in U$. Thus $f=0$, since
$f$ is locally constant and $U\subset S^{\rss}_G$ is dense.
\end{Emp}

\begin{Cor} \label{C:normorb}
Let $\M\subset \G$ be a Levi subgroup, and let $\S\subset \M$ be a maximal torus defined over $F$. Then
$\S\subset \G$ is a maximal torus, and the following diagram is commutative (compare \re{orb}):
\[
\begin{CD}
\C{M}^{\infty}(S^{\rss}_G)^* @>O^{\st}_{S,G}>> \wh{C}^{G}(G) \\
@V{(|\Dt_{M,G}|^{1/2}\cdot\res)^*}VV                                          @Ai_{M}^G AA \\
\C{M}^{\infty}(S^{\rss}_M)^* @>O^{\st}_{S,M}>>  \wh{C}^{M}(M),
\end{CD}
\]
and similarly for $O_{S,G}$ and $O_{S,M}$.
\end{Cor}

\begin{proof}
The assertion for orbital integrals is simply the dual of \rco{resnorm}
for $\H=\S$. The assertion for stable orbital integrals follows from that for orbital
integrals, by the observation \re{compar}(a) and the equality $(\G/\M)(F)=G/M$.
\end{proof}

\begin{Emp} \label{E:stable}
{\bf Stable generalized functions and representations.}

(a) We denote by $\wh{C}^{\st}(G)\subset \wh{C}^{G}(G)$ the closure of the image of
\[
O^{\st}_{G}:=\oplus_{\S\subset \G} O^{\st}_{S,G}:\oplus_{\S\subset \G} \C{M}^{\infty}(S_G^{\rss})^*\to \wh{C}^{G}(G),
\]
where the sum is taken over the set of all maximal tori $\S\subset\G$ defined over $F$.
Elements of $\wh{C}^{\st}(G)$ are called {\em stable generalized functions}.

(b) An admissible representation $\pi$ of $G$ is called {\em stable}, if its character $\chi_{\pi}$ is stable.
\end{Emp}

%The first corollary of \rp{orb} says that the induction preserves stability.

\begin{Cor} \label{C:stab}
(a) The induction map $i_{M}^G:\wh{C}^{M}(M)\to\wh{C}^{G}(G)$ sends stable generalized functions
to stable ones.

(b) The induction functor $\un{i}_{P;M}^G$ sends stable representations to stable ones.
\end{Cor}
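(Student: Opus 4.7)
My plan is to deduce part (a) directly from the commutative diagram established in \rco{normorb}, and then to obtain part (b) as an immediate consequence via \rco{ind'}.

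For part (a), the first step will be to invoke the key equal-rank observation: since $\M$ is a Levi subgroup of $\G$, every maximal torus $\S\subset \M$ defined over $F$ is automatically a maximal torus of $\G$ defined over $F$. This pairs each summand $O^{\st}_{S,M}$ appearing in the definition of $\wh{C}^{\st}(M)$ with the corresponding summand $O^{\st}_{S,G}$ appearing in the definition of $\wh{C}^{\st}(G)$. Applying \rco{normorb} then yields the identity
\[
i_M^G \circ O^{\st}_{S,M} = O^{\st}_{S,G} \circ (|\Dt_{M,G}|^{1/2}\cdot \res)^*,
\]
whose right-hand side visibly factors through $O^{\st}_{S,G}$ and therefore lands in $\wh{C}^{\st}(G)$. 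Summing over all maximal tori $\S\subset \M$ defined over $F$ shows that $i_M^G$ carries the image of $O^{\st}_M$ into $\wh{C}^{\st}(G)$.

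To extend the conclusion from the image of $O^{\st}_M$ to its closure $\wh{C}^{\st}(M)$, I will use the continuity of $i_M^G$ (which is dual to the linear map $r_M^G$, cf.~\re{gen}(c,e)) together with the fact that $\wh{C}^{\st}(G)$ is closed in $\wh{C}^{G}(G)$ by definition. Part (b) will then follow at once from \rco{ind'}: if $\rho$ is stable, then $\chi_\rho \in \wh{C}^{\st}(M)$, hence $\chi_{\un{i}_{P;M}^G(\rho)} = i_M^G(\chi_\rho) \in \wh{C}^{\st}(G)$, so $\un{i}_{P;M}^G(\rho)$ is stable.

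There is no real obstacle here; the substantive geometric content has already been absorbed into \rco{normorb} (which itself rests on \rl{res} and the identity of \rcl{parab}), and the present step is essentially bookkeeping. The only conceptual point worth emphasizing is the equal-rank observation identifying each $F$-rational maximal torus of $\M$ with one of $\G$, which ensures that no torus contributing to the stability data for $\M$ is ``lost'' when transported to $\G$.
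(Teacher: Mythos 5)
Your read of the commutative diagram in Corollary 6.9 (\rco{normorb}) is off, and the error hides the one nontrivial step. The diagram gives the identity
\[
O^{\st}_{S,G} \;=\; i_{M}^{G}\circ O^{\st}_{S,M}\circ (|\Dt_{M,G}|^{1/2}\cdot\res)^*,
\]
not the one you wrote: note that $(|\Dt_{M,G}|^{1/2}\cdot\res)^*$ goes \emph{from} $\C{M}^{\infty}(S^{\rss}_G)^*$ \emph{to} $\C{M}^{\infty}(S^{\rss}_M)^*$ (it is the dual of restriction to the smaller open set $S^{\rss}_G\subset S^{\rss}_M$), so the composition $O^{\st}_{S,G}\circ(|\Dt_{M,G}|^{1/2}\cdot\res)^*$ in your displayed formula does not even type-check. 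With the corrected identity, what you immediately get is only $\im O^{\st}_{S,G}\subset i_M^G(\im O^{\st}_{S,M})$, which is the wrong direction of inclusion. To conclude $i_M^G(\im O^{\st}_{S,M})\subset\im O^{\st}_{S,G}\subset\wh{C}^{\st}(G)$, you must additionally show that $(|\Dt_{M,G}|^{1/2}\cdot\res)^*$ is surjective, so that precomposing with it does not shrink the image. This is the step the paper carries out: since $S^{\rss}_G\subset S^{\rss}_M$ is open and dense, the restriction $\res:\C{M}^{\infty}(S^{\rss}_M)\to\C{M}^{\infty}(S^{\rss}_G)$ is injective, hence its dual is surjective (multiplication by the non-vanishing $|\Dt_{M,G}|^{1/2}$ on $S^{\rss}_G$ is harmless). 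Without this observation the argument does not close.

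The equal-rank remark — that every $F$-rational maximal torus of $\M$ is one of $\G$ — and the continuity-plus-closedness argument for passing to the closure $\wh{C}^{\st}(M)$ are both correct and match the paper, as does the deduction of (b) from \rco{ind'}. So the structure of your proof is right; only the bookkeeping around the diagram, and in particular the surjectivity of the dualized restriction map, needs to be repaired.
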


\begin{proof}
(a) Since $S^{\rss}_G\subset S^{\rss}_M$ is dense, the restriction map
$\res:\C{M}^{\infty}(S^{\rss}_M)\to \C{M}^{\infty}(S^{\rss}_G)$ is injective. Thus the dual map
$\res^*$ is surjective. Hence, by  \rco{normorb}, we get an inclusion $i_{M}^G (\im O^{\st}_{S,M})\subset \wh{C}^{\st}(G)$ for every $\S\subset\M$. Since $i_{M}^G$ is continuous, the assertion follows.

(b) Follows from (a) and \rco{ind'}.
\end{proof}

%\begin{Emp} \label{E:rem orb}
%{\bf Remark.} (a) In \re{orb} we defined (stable) orbital integrals separately for each maximal torus $S$.
%Since the maximal tori lie in a family, this construction can be made uniform.

%(b) Alternatively, we could make use of the Chevalley morphism $\nu_G:G\to \c_G$. This would simplify the construction,
%if one is only interested in stable orbital integrals for strongly regular elements, but is less convenient for orbital integrals, for example.
%\end{Emp}

%Since the subset $\bO _M U_0\subset P_0$ is Ad-invariant we can consider the quotient $Y$ of $G\times bO _M U_0/P_0$ where $P_0$ acts  by
%left shifts on $G$ and by conjugation on $\bO _M U_0$. Let $\ti h:G\times \bO _M U_0\to G$ by given by $\ti h(g,a)=g^{-1}ag$.
%It is easy to see that $\ti h$ induces an isomorphism $Y\to \bO _G$

%Lemma 2 follows immediately from this equality.

\appendix\section{A generalization of a theorem of Lusztig--Spaltenstein}

%As a consequence, we get the following purely algebraic result, shown for algebraically closed fields by Lusztig and Spaltenstein \cite{LS}. First we introduce a notation.

\begin{Emp} \label{E:unipclas}
{\bf Notation.} Let $F$ be an infinite field. All algebraic varieties and all morphisms of algebraic varieties are over $F$.

(a) Let $\G$ be a connected reductive group, $\P\subset \G$  a parabolic subgroup, $\U\subset \P$ the unipotent radical, and $\M\subset \P$ a Levi subgroup.

(b) For an algebraic variety $\X$, we denote the set $\X(F)$ by $X$. In particular, we have $G=\G(F)$, $\wt{G}_P=\wt{\G}_{\P}(F)$, etc. (compare \re{alcase}).

(c) For an $\Ad P$-invariant subset $D\subset  P$, we set $\Ind_P^G(D):=G\overset{P,\Ad}{\times}D\subset\wt{G}_P$.

(d) For an $\Ad M$-invariant subset $C\subset  M$, we set $\Ind_M^G(C):=G\overset{M,\Ad}{\times}C\subset \wt{G}_M$, $C_P:=U\cdot C\subset P$, $\Ind_P^G(C_P)\subset\wt{G}_P$ and
$C_{P;G}:=a_{P,G}(\Ind_P^G(C_P))\subset G$, where $a_{P,G}:\wt{G}_P\to G$ was defined in \re{grspr}.
\end{Emp}

From now on, we assume that $C\subset  M$ is a unipotent $M$-conjugacy class.

\begin{Emp} \label{E:qind}
{\bf Question.} Does the set $C_{P;G}$ depend on the choice of $\P\supset \M$?
\end{Emp}

\begin{Emp} \label{E:remls}
{\bf Remarks.} (a) $C_{P;G}$ is a union of unipotent conjugacy classes
in $G$.

(b) Let $F$ be algebraically closed. By a theorem of Chevalley, $C_{P;G}\subset G$ is a constructible set, whose Zariski closure $\ov{C}_{P;G}$ is irreducible. This case was considered by Lusztig and Spaltenstein in \cite{LS}, and they showed that $\ov{C}_{P;G}$ does not depend on  $\P$, using representation theory. A simpler proof of this fact was given later by Lusztig \cite[Lem 10.3(a)]{Lu}.
\end{Emp}

The goal of this appendix is to generalize the result of \cite{LS} to other fields.

\begin{Emp} \label{E:rat}
{\bf Saturation.} Let $\X$ be an algebraic variety over $F$, and  let $A\subset X$ be a subset.

(a) We denote by $\sat'(A)=\sat'_\X(A)\subset X$ the union $\cup_{(\V,x,f)}f(x)$, taken over triples $(\V,x,f)$, where
$\V\subset\B{A}^1$ is an open subvariety, $x\in V$, $\V':=\V\sm \{x\}$, and $f:\V\to \X$ is a morphism such that $f(V')\subset A$.

(b) We say that a subset $A\subset X$ is {\em saturated}, if $\sat'(A)=A$.

(c) Let $\sat(A)\subset X$ be the smallest saturated subset, containing $A$.
\end{Emp}

%The goal of this appendix is to prove the following result.

\begin{Thm} \label{T:LS}
The saturation $\sat(C_{P;G})$ does not depend on $\P$.
\end{Thm}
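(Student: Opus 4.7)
By symmetry it suffices to show $\sat(C_{P;G}) \subset \sat(C_{P';G})$ for any two parabolics $\P, \P' \supset \M$; we will establish this by exhibiting, for every $x \in C_{P;G}$, an $\B{A}^1$-curve in $C_{P';G}$ that limits to $x$. We may assume $\M \neq \G$, so that $\Z(\M)^0$ is a nontrivial torus.

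The key input is that, for $t$ in the open subset $Z^{reg} := \Z(\M)^0 \cap \M^{reg/\G}$ of $\Z(\M)^0$, the twisted induced set
\[
(tC)_{P;G} := a_{P,G}\bigl(\Ind_P^G(U_P \cdot tC)\bigr)
\]
equals $\bigcup_{g \in G} g(tC) g^{-1}$, hence is independent of $\P$. Indeed, $t$ being semisimple and commuting with the unipotent class $C$, Jordan decomposition gives $tC \subset \M^{reg/\G}$, so $U_P \cdot tC \subset \P^{reg/\G}$ by \rl{eqrk}(d). By \rl{eqrk}(a,e), the morphism $a_{M,P}$ induces an isomorphism identifying $\Ind_P^G(U_P \cdot tC)$ with $\Ind_M^G(tC)$ and intertwining $a_{P,G}$ with $a_{M,G}$; thus $(tC)_{P;G} = a_{M,G}(\Ind_M^G(tC)) = \bigcup_g g(tC) g^{-1}$, visibly independent of $\P$.

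Fix $x = g_0 u c g_0^{-1} \in C_{P;G}$ with $g_0 \in G$, $u \in U_P$, $c \in C$, and pick a nonconstant cocharacter $\chi: \B{G}_m \to \Z(\M)^0$ with $\chi(1) = 1$. Deleting from $\B{G}_m$ the finitely many points $s \neq 1$ with $\chi(s) \notin Z^{reg}$ yields an open $V \subset \B{A}^1$ with $1 \in V$ and $\chi(V \sm \{1\}) \subset Z^{reg}$. The curve $\tilde x(s) := g_0 \chi(s) u c g_0^{-1}$ satisfies $\tilde x(1) = x$, and, using the identity $\chi(s) uc = (\chi(s) u \chi(s)^{-1}) \cdot \chi(s) c \in U_P \cdot \chi(s) C$, we see that $\tilde x(s) \in (\chi(s) C)_{P;G} = (\chi(s) C)_{P';G}$ for $s \in V \sm \{1\}$.

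To produce the desired curve into $C_{P';G}$, the plan is to transport $\tilde x$ to the $\P'$-picture via \rl{eqrk}(e) and then replace $\chi(s)$ by $1$. Concretely, the $\P$-lift $[g_0, \chi(s) u c]_\P \in \wt{G}^{reg}_P$ corresponds, under the chain of identifications $\wt{\G}^{reg}_\P \isom \wt{\G}^{reg}_\M \isom \wt{\G}^{reg}_{\P'}$, to a $\P'$-lift of the form $[\tilde g(s), \chi(s) \tilde u(s) c]_{\P'}$ with $\tilde g(s) \in G$ and $\tilde u(s) \in U_{P'}$ algebraic in $s \in V \sm \{1\}$. Setting
\[
\phi(s) := a_{P',G}\bigl([\tilde g(s), \tilde u(s) c]_{\P'}\bigr) = \tilde g(s) \cdot \tilde u(s) c \cdot \tilde g(s)^{-1},
\]
one has $\phi(s) \in C_{P';G}$ for $s \in V \sm \{1\}$, and since $\chi(1) = 1$, $\phi(1) = \tilde x(1) = x$. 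The main obstacle---where most of the verification lies---is arranging that $\tilde g$ and $\tilde u$ extend algebraically to $s = 1$. The coset $\tilde g(s) \P' \in \G/\P'$ extends by properness of $\G/\P'$; absorbing the divergent part of $\tilde g(s)$ into the $\P'$-fiber via a local algebraic section of $\G \to \G/\P'$ around the limit coset, and propagating this adjustment through the semidirect decomposition $\P' = \U_{\P'} \rtimes \M$ to redefine $\tilde u(s)$ and $\tilde g(s)$ accordingly, yields the required extensions. This step uses essentially the fact that \rl{eqrk}(e) gives an isomorphism, not merely the finite surjectivity of \rco{eqrank}.
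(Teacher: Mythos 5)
Your plan follows the same circle of ideas as the paper's proof: twist $C$ by a regular central cocharacter to land in the regular locus, where \rl{eqrk}(e) identifies the $\P$- and $\P'$-pictures; use properness of $a_{P',G}$ to pass to the limit $s=1$; and untwist. The paper packages this through the relative saturation $\sat'(a_{M,G};-)$ of \re{rel} and the product decomposition $\P_{\unip}\times\Z_\M\isom\P_{\Z_\M}$, proving the stronger statement that $D_P:=a_{P,G}(\sat'(\Ind_P^G(C_P)))$ is itself $\P$-independent; you instead argue pointwise with explicit $\B{A}^1$-curves, but the underlying ingredients are the same, so this is essentially the paper's argument.

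There is, however, a concrete slip in the final untwisting step. After replacing $\tilde g(s)$ by $\sigma([\tilde g(s)])$ using a Zariski-local section $\sigma$ of $\G\to\G/\P'$, the $\P'$-fiber entry becomes $q(s)=p(s)\,\chi(s)c\,p(s)^{-1}$ with $p(s)\in P'$. Writing $p(s)=v(s)m(s)$ under $\P'=\U_{\P'}\rtimes\M$ gives $q(s)=v(s)\,\chi(s)\bigl(m(s)cm(s)^{-1}\bigr)\,v(s)^{-1}$, whose Levi projection is $\chi(s)\cdot m(s)cm(s)^{-1}$. Since $m(s)$ need not centralize $c$, the fiber entry is \emph{not} of the form $\chi(s)\tilde u(s)c$ with $\tilde u(s)\in U_{P'}$ and $c$ fixed: the element $\tilde u(s):=\chi(s)^{-1}q(s)c^{-1}$ has Levi projection $m(s)cm(s)^{-1}c^{-1}$, generally nontrivial. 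And $m(s)$ can diverge together with $\tilde g(s)$, so one cannot conjugate the $M$-part back to $c$ without reintroducing the divergence just removed. The fix is to drop the fixed $c$: the element $\chi(s)^{-1}q(s)$ lies in $p'^{-1}(C)=U_{P'}\cdot C$ (its Levi projection is $m(s)cm(s)^{-1}\in C$), it extends to $s=1$ because $q(s)$ and $\chi(s)$ do, and $\phi(s):=\sigma([\tilde g(s)])\,\chi(s)^{-1}q(s)\,\sigma([\tilde g(s)])^{-1}$ gives the desired curve with $\phi(1)=x$. This is precisely what the paper's isomorphism $\P_{\unip}\times\Z_\M\isom\P_{\Z_\M}$ accomplishes in one stroke: the central twist splits off canonically and, once removed, lands you in $\P_{\unip}\supset C_P$ without any reference to a chosen representative $c$.
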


\begin{Emp} \label{E:ratrem}
{\bf Remarks.} (a) The notion of saturation is only reasonable, if the variety $\X$ is rationally connected.

(b) For every closed subvariety $\Y\subset\X$, the subset $\Y(F)\subset X$ is  saturated. Also, if $F$ is a local field, then every closed subset of $X$  is saturated.

(c) If $\X=\B{A}^1$, then a subset $A\subset X$ is saturated if and only if either $A=X$ or $X\sm A$ is infinite.

(d) By (c), saturated subsets of $X$ are not closed under finite unions. Therefore the set $X$ does not have a topology, whose closed subsets are saturated subsets. On the other hand, our proof of \rt{LS} indicates that in some respects saturated sets behave like closed subsets in some topology.
%On the other hand, the results of this section indicate that in many respects saturated sets behave like closed subsets in some topology.
% We wonder whether saturated subsets of $X$ are closed under finite unions, when $F$ is Hilbertian?
%(c) We are wondering whether for a rationally connected variety there exits a topology on $X$ such that the arguments of this section would remain correct if one replaces saturated subsets by closed subsets?
\end{Emp}

\begin{Lem} \label{L:propcl}
Let $\X$ and $\Y$ be algebraic varieties.

(a) For a morphism $f:\X\to \Y$ and a subset $A\subset X$, we have an inclusion
$f(\sat'(A))\subset \sat(f'(A))$.

%(b) Let $Y\subset X$ be an open subset and $A\subset X$. Then $\sat'_X(A)\cap Y=\sat'_Y(A\cap Y)$.

(b) For every $A\subset X$ and $B\subset Y$, we have %the equality
$\sat'(A\times B)=\sat'(A)\times \sat'(B)$.

(c) Let $\H$ be an algebraic group, and let $f:\X\to \Y$ be a principal $\H$-bundle, locally trivial in the Zariski topology.
Then for every subset $A\subset Y$ we have the equality $\sat'(f^{-1}(A))=f^{-1}(\sat'(A))$.

(d) For an $\Ad P$-invariant subset $A\subset P$, the corresponding
subset $\Ind_P^G(A)\subset\Ind_P^G(X)$ satisfies
$\sat'(\Ind_P^G(A))=\Ind_P^G(\sat'(A))$.

(e) Let $\Y\subset \X=\B{A}^n$ be an open dense subvariety. Then $\sat'_X (Y)=X$.
\end{Lem}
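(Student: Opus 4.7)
The plan is to prove parts (a)--(e) in order, each building on the previous. Part (a) is immediate from the definition: a witness $(\V, v_0, g)$ for $x \in \sat'(A)$ yields, by composition, the witness $(\V, v_0, f \circ g)$ for $f(x) \in \sat'(f(A))$.

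For (b), the inclusion $\sat'(A \times B) \subset \sat'(A) \times \sat'(B)$ follows from (a) applied to the two projections. For the reverse inclusion, given witnesses $(\V_i, v_i, g_i)$ for $a \in \sat'(A)$ and $b \in \sat'(B)$, I would translate both distinguished points to $0 \in \B{A}^1$ and restrict to the common open neighborhood $\V_1 \cap \V_2$, so that $(g_1, g_2)$ witnesses $(a, b) \in \sat'(A \times B)$. Part (c) also has one inclusion from (a). For the reverse, given $x \in \X$ with $f(x) \in \sat'(A)$ witnessed by $(\V, v_0, g)$, I would exploit Zariski-local triviality of $f$ near $f(x)$ to choose a section $\sigma$, shrink $\V$ so that $g(\V)$ lies in the domain of $\sigma$, and then modify $\sigma \circ g$ by the unique element $h \in \H(F)$ with $\sigma(f(x)) \cdot h = x$; since $f \circ (\sigma \circ g) = g$, the resulting morphism $v \mapsto (\sigma \circ g)(v) \cdot h$ sends $v_0$ to $x$ and $\V \setminus \{v_0\}$ into $f^{-1}(A)$.

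Part (d) combines (b) and (c). The canonical projection $p : \G \times \P \to \Ind_\P^\G(\P) = \wt \G_\P$ is a Zariski-locally trivial principal $\P$-bundle, whose $F$-points are surjective by the identification $\wt G_P = \Ind_P^G(P)$ from \re{main}(b). For $\Ad P$-invariant $A \subset P$ the invariance directly implies $p^{-1}(\Ind_P^G(A)) = G \times A$. By (c), $\sat'(G \times A) = p^{-1}(\sat'(\Ind_P^G(A)))$; by (b) and the trivial fact that $\sat'(G) = G$ (every $g \in G$ is witnessed by the constant morphism with value $g$), the left-hand side equals $G \times \sat'(A) = p^{-1}(\Ind_P^G(\sat'(A)))$, where I also use that $\sat'(A)$ remains $\Ad P$-invariant since conjugation by $p_0 \in P$ carries witnesses to witnesses. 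Applying $p$ yields the claim.

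The main difficulty is part (e). Fix $x \in F^n$ and set $\Z := \X \setminus \Y$, a proper closed subvariety. To construct a witness, I need a direction $c \in F^n$ such that the line $L_c := \{x + tc : t \in \B{A}^1\}$ is not contained in $\Z$; then $L_c \cap Z$ is finite, and removing its points other than $0$ from $\B{A}^1$ yields an open $\V \ni 0$ with $f_c(0) = x$ and $f_c(\V \setminus \{0\}) \subset Y$, where $f_c(t) := x + tc$. The obstacle is producing such $c$ defined over $F$, not merely over $\ov F$. My plan is to consider the morphism $\phi : \B{A}^1 \times \B{A}^n \to \B{A}^n$, $(t, c) \mapsto x + tc$, and the locus $U \subset \B{A}^n$ of those $c$ for which $L_c \not\subset \Z$; this $U$ is Zariski open. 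If $U$ were empty then every line through $x$ would lie in $\Z$, forcing $\Z(\ov F) = \ov F^n$ and hence $\Z = \X$, a contradiction. Thus $U$ is a nonempty open subvariety of $\B{A}^n$, and since $F$ is infinite $U(F) \neq \emptyset$, yielding the desired $c$.
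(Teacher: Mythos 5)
Your proof is correct and follows essentially the same approach as the paper's for every part: (a) is immediate by composition, (b) reconciles the two witnessing triples, (c) reduces via Zariski-local triviality of the bundle, (d) passes to $G\times P$, and (e) finds an $F$-rational line through $x$ meeting $Y$. The only (inessential) differences are that in (c) you construct the lift explicitly via a local section and a correcting element $h\in H$, whereas the paper invokes Zariski-locality of $\sat'$ to reduce directly to the product case and cites (b); in (d) you substitute one application of (b) where the paper uses (c) twice on the two projections of $G\times P$; and in (e) you parametrize lines through $x$ by $\B{A}^n$ while the paper uses $\B{P}_x\cong\B{P}^{n-1}$.
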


\begin{proof}
(a) is clear.

%(b) Since $Y\subset X$, for every $a\in\sat'_X(A)\cap Y$ there exists triple $(V,x,f)$ in the definition
%of $\sat'_X$ such that $f(x)=a\in Y$, thus we can shrink $V$ so that $f(V)\subset Y$.

(b) The inclusion $\subset$ follows from (a). Conversely, assume that $a\in \sat'(A)$ and $b\in \sat'(B)$ are defined using triples
$(\V_a,x_a,f_a)$ and $(\V_b,x_b,f_b)$, respectively, where $\V_a$ and $\V_b$ are open subsets of $\B{A}^1$.
Then we can assume that $\V_a=\V_b\subset\B{A}^1$ and $x_a=x_b$, which implies that
$(a,b)\in \sat'(A\times B)$.

(c) Since the saturation $\sat'$ is local in the Zariski topology, we can assume that
$\X=\Y\times\H$. In this case the assertion follows from (b).

(d) Arguing as in \re{ind}(b), the assertion follows from (c).

(e) It suffices to show that for every $x\in\B{A}^n(F)$, there exists a line $\L\subset \B{A}^n$,
defined over $F$, such that $x\in\L$ and $\L\cap \Y\neq \emptyset$. Consider the variety $\B{P}_x$ of
lines $\L\subset \B{A}^n$ such that $x\in \L$. Since $\Y\subset \B{A}^n$ is Zariski dense, the set of
$\L\in\B{P}_x$ such that $\L\cap \Y\neq \emptyset$, is Zariski dense. Since
$\B{P}_x\cong\B{P}^{n-1}$, while $F$ is infinite, the subset $\B{P}_x(F)\subset\B{P}_x$ is Zariski dense, and the
assertion follows.
\end{proof}

\begin{Emp}
{\bf Remark.}
All the properties of $\sat'$, formulated in \rl{propcl}, have natural analogs for $\sat$.
\end{Emp}

\begin{Emp} \label{E:rel}
{\bf Relative saturation.} Let $h:\X\to \Y$ be a morphism and $A\subset X$.

(a) Denote by $\sat'(h;A)\subset \sat'(h(A))$ the union $\cup_{(\V,x,f)}f(x)$, taken over all triples
$(\V,x,f)$ in the definition of $\sat'(h(A))$ (see \re{rat}(a)) such that $f|_{\V'}:\V'\to \Y$ has a lift
$\wt{f}':\V'\to \X$ with $\wt{f}'(V')\subset A$.

(b) If $h$ is proper, then $\sat'(h;A)= h(\sat'(A))$. Indeed, the valuative criterion implies that
every pair $(f,\wt{f}')$ as in (a) defines a unique morphism $\wt{f}:\V\to \X$ such that $h\circ \wt{f}=f$ and
$\wt{f}|_{\V'}=\wt{f}'$.

(c) Let $\X'\subset\X$ be an open subvariety such that $A\subset X'$, and let $h':=h|_{\X'}:\X'\to \Y$ be the restriction.
By definition, $\sat'(h';A)=\sat'(h;A)$.
\end{Emp}

\begin{Emp} \label{E:prep}
{\bf Notation}.
(a) Let $a_{\M,\G}:\wt{\G}_\M^{\reg}\to \G$  be the map defined in \re{grspr} and \re{simple}, let
$\nu_{\G}:\G\to \c_\G$ be the Chevalley map (see \re{chev}(a)), and set $e_\G:=\nu_\G(1)\in \c_\G$.

(b) Let $\G^{\der}\subset\G$ be the derived group of $\G$, $\Z(\M)$ the center of $\M$, and set $\Z_\M:=(\Z(\M)\cap \G^{\der})^0$.
Then $\Z_\M$ is a split torus over $F$.  Set $\Z^{\reg}_{\M}:=\Z_\M\cap \M^{\reg/\G}$. Notice that since
$\Z_\M$ acts on $\Lie \G/\Lie \M$ by a direct sum of non-trivial characters, the subset
$\Z^{\reg}_{\M}\subset \Z_\M$ is open and dense.

(c) Set $C^{\reg}_Z:=C\cdot Z^{\reg}_M\subset M$. Since
$C\subset M$ consists of unipotent elements, and $Z_M\subset Z(M)$, we have $C^{\reg}_Z\subset M^{\reg/G}$.
Also $C^{\reg}_Z$ is $\Ad M$-invariant,
so we can form a subset $\Ind_M^G(C^{\reg}_Z)\subset \wt{G}_M^{\reg}$.

(d) Set $C^{\reg}_{P,Z}:=p^{-1}(C^{\reg}_Z)=C_P\cdot Z^{\reg}_M\subset P^{\reg/G}$ (see \rl{eqrk}(d)).
\end{Emp}

\begin{Emp}
\begin{proof}[Proof of \rt{LS}]
Consider the subset $D_P:=a_{P,G}(\sat'(\Ind_P^G(C_P)))$ of $G$. Since $C_{P;G}=a_{P,G}(\Ind_P^G(C_P))$, we have inclusions
$C_{P;G}\subset D_P\subset \sat(C_{P;G})$ (see \rl{propcl}(a)), thus  $\sat(D_P)=\sat(C_{P;G})$. It suffices to show
that $D_P$ does not depend on $\P$. But this follows from the following description of $D_P$.
\end{proof}
\end{Emp}

\begin{Cl} \label{C:form}
We have the equality $D_P=\sat'(a_{M,G};\Ind_M^G(C^{\reg}_Z))\cap \nu_G^{-1}(e_\G)$.
\end{Cl}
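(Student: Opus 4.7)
The plan is to rewrite the right-hand side as $a_{P,G}(\sat'(\Ind_P^G(C_{P,Z}^{\reg})))\cap \nu_G^{-1}(e_\G)$ and then establish the two inclusions. The reformulation uses that by \rl{eqrk}(a), every $ucz\in P^{\reg/G}$ is $P$-conjugate to $cz$, so under the isomorphism $a_{M,P}:\wt{G}_M^{\reg}\isom \wt{G}_P^{\reg}$ of \rl{eqrk}(e) the subsets $\Ind_M^G(C_Z^{\reg})$ and $\Ind_P^G(C_{P,Z}^{\reg})$ correspond. Combining this with \re{rel}(c) and the properness of $a_{P,G}$ (coming from projectivity of $G/P$, via \re{rel}(b)) gives the reformulation. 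Moreover, $D_P\subset\nu_G^{-1}(e_\G)$ is automatic, as $\nu_G\circ a_{P,G}$ is constantly $e_\G$ on $\Ind_P^G(C_P)$ and hence on its $\sat'$-saturation by density.

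For the inclusion $\subset$, given $g=a_{P,G}(f(x))$ with $f(V')\subset\Ind_P^G(C_P)$, I shrink $V$ to a Zariski neighborhood $V_x\subset V\subset\B{A}^1$ over which $f$ lifts to $(g_0,p_0):V_x\to G\times P$ (using local triviality of $G\times P\to\wt{G}_P$, which follows from Bruhat decomposition of $G/P$), whence $p_0(V_x')\subset C_P$ by $P$-invariance. I then choose a morphism $z:V_x\to Z_M$ with $z(x)=1$ and $z(V_x')\subset Z_M^{\reg}$, available because $F$ is infinite and $Z_M^{\reg}$ is open dense in the split torus $Z_M$ (e.g., compose a sufficiently generic one-parameter subgroup of $Z_M$ with a linear translation of $V_x$). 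Then $f_1(t):=[g_0(t),p_0(t)z(t)]$ has $f_1(V_x')\subset\Ind_P^G(C_{P,Z}^{\reg})$ and $f_1(x)=f(x)$, placing $g$ in the right-hand side.

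For the inclusion $\supset$, I take $g$ in the right-hand side, realized by $(V,x,f)$ with $f(V')\subset\Ind_P^G(C_{P,Z}^{\reg})$ and $\nu_G(g)=e_\G$, and lift as before to $(g_0,p_0):V_x\to G\times P$ with $p_0(V_x')\subset C_{P,Z}^{\reg}$. The aim is to construct a morphism $z_0:V_x\to Z_M$ with $z_0(x)=1$ and $z_0|_{V_x'}$ equal to the ``central component'' of $p_0|_{V_x'}$. Granting this, $f^{\new}(t):=[g_0(t),p_0(t)z_0(t)^{-1}]$ has $f^{\new}(V_x')\subset\Ind_P^G(C_P)$ (since $(ucz)z^{-1}=uc\in C_P$) and $f^{\new}(x)=f(x)$, so $g=a_{P,G}(f^{\new}(x))\in D_P$.

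The main obstacle is constructing $z_0$ as a morphism on all of $V_x$ without passing to an \'etale cover, which could leave the category of $\B{A}^1$-opens. First, the ``central component'' projection $C_{P,Z}^{\reg}\to Z_M^{\reg}$, $ucz\mapsto z$, is a well-defined morphism because $C\zeta\cap C=\emptyset$ for $\zeta\in Z_M\sm\{1\}$: the semisimple part of $c\zeta$ is $\zeta\neq 1$ while $C$ is unipotent, so $C\times Z_M^{\reg}\to C_Z^{\reg}$ is a bijection of smooth irreducible varieties. Composing with $p_0$ gives $z_0:V_x'\to Z_M^{\reg}$. Second, since $W_M$ acts trivially on $Z_M\subset Z(M)$, the morphism $\nu_M|_{Z_M}:Z_M\to\c_M$ is injective and finite, hence proper, with $\nu_M^{-1}(e_M)\cap Z_M=\{1\}$. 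The hypothesis $\nu_G(g)=e_\G$ combined with \re{simple}(b) and \rl{eqrk}(f) forces $\nu_P(p_0(x))=e_P$, so $\nu_P\circ p_0:V_x\to\c_P=\c_M$ factors through the closed subset $\nu_M(Z_M)$, and the valuative criterion of properness applied to $\nu_M|_{Z_M}$ extends $z_0$ to $V_x$ with $z_0(x)=1$.
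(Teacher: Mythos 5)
Your proposal is correct and follows the same conceptual route as the paper's proof: reduce to the parabolic level using the open embedding $a_{M,P}$ (\rl{eqrk}(e)), properness of $a_{P,G}$, and \re{rel}(b),(c), so the right-hand side becomes $a_{P,G}(\sat'(\Ind_P^G(C_{P,Z}^{\reg})))\cap\nu_G^{-1}(e_\G)$, and then isolate and adjust the central $Z_M$-component. The difference is presentational: the paper proceeds by a chain of equalities, applying \rl{propcl}(d) to commute $\sat'$ with $\Ind_P^G$ and then using the isomorphism $\P_{\unip}\times\Z_\M\isom\P_{\Z_\M}$ together with \rl{propcl}(b),(e), whereas you unpack this into two set-theoretic inclusions by lifting each test morphism $f:\V\to\wt{G}_P$ to $G\times P$ (Zariski-local triviality of the $P$-bundle) and multiplying the $P$-component by a suitable $z:\V_x\to\Z_\M$. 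Two small points deserve to be tightened. First, the existence of a map $z:\V_x\to\Z_\M$ with $z(x)=1$ and $z(V_x')\subset Z_M^{\reg}$ is best justified by noting that $Z_M\sm Z_M^{\reg}$ is the union of kernels of the roots $\al\in\Phi(\G,\T)\sm\Phi(\M,\T)$ restricted to $\Z_\M$, each nontrivial; a cocharacter $\la$ with $\langle\la,\al\rangle\neq 0$ for all such $\al$ does the job after shrinking $\V_x$ to avoid the finitely many bad parameter values. Second, the ``central component'' $C_{P,Z}^{\reg}\to Z_M^{\reg}$ is most cleanly shown to be a morphism by factoring through $\nu_P$ and the closed embedding $\nu_\M|_{\Z_\M}:\Z_\M\hra\c_\M$ (which is what the paper uses), rather than via the assertion that a bijection of smooth varieties has a morphic inverse — that inference is not valid in general. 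With those points filled in, your two inclusions match the paper's equality $\sat'(C^{\reg}_{P,Z})\cap\nu_P^{-1}(e_\P)=\sat'(C_P)$.
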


\begin{proof}
Recall (see \re{simple}(d)) that morphism $a_{M,G}$ factors as $\wt{G}_M^{\reg}\overset{a_{M,P}}{\lra}\wt{G}_P\overset{a_{P,G}}{\lra} G$.
Notice that $a_{M,P}:\wt{G}_M^{\reg}\to \wt{G}_P$ is an open embedding (use \rl{eqrk}(e)) and it satisfies
$a_{M,P}(\Ind_M^G(C^{\reg}_Z))=\Ind_P^G(C^{\reg}_{P,Z})$. Therefore, by \re{rel}(c), we have the equality
\[
\sat'(a_{M,G};\Ind_M^G(C^{\reg}_Z))=\sat'(a_{P,G};\Ind_P^G(C^{\reg}_{P,Z})).
\]
Next, since $a_{P,G}$ is proper, we conclude from \re{rel}(b) that  \[
\sat'(a_{P,G};\Ind_P^G(C^{\reg}_{P,Z}))=a_{P,G}(\sat'(\Ind_P^G(C^{\reg}_{P,Z}))).
\]
Thus it suffices to show the equality
\[
a_{P,G}(\sat'(\Ind_P^G(C_P)))= a_{P,G}(\sat'(\Ind_P^G(C^{\reg}_{P,Z})))\cap \nu_G^{-1}(e_\G).
\]
Using the commutative diagram from \re{simple}(b) for $\H=\P$ and equality $\pi_{P,G}^{-1}(e_\G)=e_\P$ (see \rl{eqrk}(f)),
we conclude that $a_{P,G}(A)\cap \nu_G^{-1}(e_\G)=a_{P,G}(A\cap \nu_{\wt{G}_P}^{-1}(e_\P))$ for every subset $A\subset \wt{G}_P$.
Thus it suffices to show the equality
\[
\sat'(\Ind_P^G(C^{\reg}_{P,Z}))\cap \nu_{\wt{G}_P}^{-1}(e_\P)= \sat'(\Ind_P^G(C_P))\subset\wt{G}_P.
\]
Using \rl{propcl}(d), it suffices to show the equality
\begin{equation} \label{Eq:sat}
\sat'(C^{\reg}_{P,Z})\cap \nu_P^{-1}(e_\P)= \sat'(C_P)\subset P.
\end{equation}
Set $\P_{\unip}:=\nu_\P^{-1}(e_\P)\subset \P$, and
$\P_{\Z_\M}:=\nu_\P^{-1}(\nu_\P(\Z_\M))\subset \P$. Since the map $\nu_\P|_{\Z_\M}:\Z_\M\to \c_\P\cong \c_\M$ is a closed embedding, the multiplication map induces an isomorphism $\P_{\unip}\times \Z_\M\isom \P_{\Z_\M}$. Moreover, it induces a
bijection $C_P\times Z^{\reg}_M\isom C^{\reg}_{P,Z}$. Thus, formula \form{sat} follows from the equality
\[
\sat'_{\P_{\unip}\times \Z_\M}(C_P\times Z^{\reg}_M)=\sat'_{\P_{\unip}}(C_P)\times \sat'_{\Z_\M}(Z^{\reg}_M)=\sat'_{\P_{\unip}}(C_P)\times Z_M,
\]
which follows from \rl{propcl}(b),(e).
\end{proof}

\begin{Cor} \label{C:LS}
(a) If $F$ is algebraically closed, then the closure $\cl(C_{P;G})\subset G$ of $C_{P;G}$ in the Zariski topology does not depend on $\P$.

(b) If $F$ is a local field, then the closure $\cl(C_{P;G})\subset G$ of $C_{P;G}$ in the analytic topology does not depend on $\P$.
\end{Cor}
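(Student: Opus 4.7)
The plan is to derive both parts of \rco{LS} as immediate consequences of \rt{LS} combined with \rr{ratrem}(b). The key observation is that in each of the two settings the closure $\cl(C_{P;G}) \subset G$ is itself a saturated subset of $G$: in case (a), since $F$ is algebraically closed, the Zariski closure $\cl(C_{P;G})$ equals $\Y(F)$ for the Zariski closed subvariety $\Y = \ov{C_{P;G}}\subset \G$, hence is saturated by \rr{ratrem}(b); in case (b), since $F$ is local, every subset of $G = \G(F)$ closed in the analytic topology is saturated, again by \rr{ratrem}(b).

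Once this is noted, the argument is essentially formal. Since $\sat(C_{P;G})$ is by definition the smallest saturated subset containing $C_{P;G}$, and $\cl(C_{P;G})$ is a saturated subset containing $C_{P;G}$, we obtain the chain of inclusions
\[
C_{P;G} \subset \sat(C_{P;G}) \subset \cl(C_{P;G}).
\]
Applying $\cl$ (in the relevant topology) and using idempotence of closure gives
\[
\cl(C_{P;G}) \subset \cl(\sat(C_{P;G})) \subset \cl(\cl(C_{P;G})) = \cl(C_{P;G}),
\]
so $\cl(C_{P;G}) = \cl(\sat(C_{P;G}))$. By \rt{LS} the set $\sat(C_{P;G})$ does not depend on $\P$, and therefore neither does its closure.

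There is no real obstacle in this step: the substantive work is carried out in \rt{LS} and in the verification of \rr{ratrem}(b), which in case (b) uses only that a morphism $f\colon \V \to \X$ from a curve is analytically continuous, so if $f(V')$ lies in a closed set $A$ then $f(x) \in A$ as well. Both parts of \rco{LS} then follow at once from the two displayed inclusions above.
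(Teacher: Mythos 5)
Your proof is correct and is essentially identical to the paper's own argument: both observe that closed subsets of $G$ are saturated (Remark \re{ratrem}(b)), deduce $C_{P;G}\subset\sat(C_{P;G})\subset\cl(C_{P;G})$ and hence $\cl(C_{P;G})=\cl(\sat(C_{P;G}))$, and then invoke Theorem \ref{T:LS}.
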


\begin{proof}
In both cases, every closed subset in $G$ is saturated. Therefore we have inclusions
$C_{P;G}\subset \sat(C_{P;G})\subset\cl(C_{P;G})$, which imply that $\cl(C_{P;G})=\cl(\sat(C_{P;G}))$.
Thus the assertion follows from \rt{LS}.
\end{proof}

\begin{Emp} \label{E:heart}
{\bf Notation.} For an $\Ad G$-invariant subset $D\subset G$, we denote by $D^{\heartsuit}\subset D$ the union of $G$-conjugacy classes that are Zariski dense in (the Zariski closure of) $D$.
\end{Emp}

\begin{Emp} \label{E:qind}
{\bf Question.} Is it true that $C^{\heartsuit}_{P;G}$ is independent of $\P$?
\end{Emp}

\begin{Emp}
{\bf Remarks.} (a)  Let $F$ be algebraically closed. Since the number of unipotent conjugacy classes in $G$ is finite, we conclude that $C^{\heartsuit}_{P;G}$ is a single conjugacy class. %Thus $C^{\heartsuit}_{P;G}$ is independent of $\P$ (compare \re{remls}(b)).

(b) Let $F$ be general. Then, by (a), $C^{\heartsuit}_{P;G}$ is a union of unipotent conjugacy classes, belonging to a single conjugacy class over $\ov{F}$.
\end{Emp}

\begin{Lem} \label{L:op}
Let $F$ be either algebraically closed or local. Then for every $\Ad G$-invariant subset $D\subset G$,
we have $D^{\heartsuit}=\sat(D)^{\heartsuit}$.
\end{Lem}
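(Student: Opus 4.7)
The plan is to reduce, via the easy equality $\cl(\sat(D))=\cl(D)$, to showing that every $G$-conjugacy class $O\subset\sat(D)$ which is Zariski dense in $\cl(D)$ already lies in $D$. The equality of closures follows from Remark~\re{ratrem}(b): in both cases under consideration every Zariski closed subset of $G$ is saturated, so $\cl(D)$ itself is saturated, giving $\sat(D)\subset\cl(D)$ and hence $\cl(\sat(D))\subset\cl(D)$; the reverse inclusion is automatic. The inclusion $D^{\heartsuit}\subset\sat(D)^{\heartsuit}$ is then immediate, and the opposite inclusion reduces, by the $\Ad G$-invariance of $D$, to producing a single element of $O\cap D$ for each such $O$.

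For this, fix $g\in O$ and let $\G\cdot g\subset\G$ denote the (locally closed) algebraic conjugacy class of $g$, so that $O\subset(\G\cdot g)(F)$ and $\cl(O)\subset\ov{\G\cdot g}(F)$. In the algebraically closed case, $O$ coincides with $\G\cdot g$ and is therefore Zariski open in $\cl(O)=\cl(D)$, so $\cl(D)\sm O$ is Zariski closed; were $O\cap D$ empty, then $D\subset\cl(D)\sm O$ would force $\cl(D)\subset\cl(D)\sm O$, contradicting $O\neq\emp$. In the local field case, the orbit map $\G\to\G\cdot g$ is smooth and surjective, so $O$ is open in $(\G\cdot g)(F)$ in the analytic topology; since $(\G\cdot g)(F)$ is in turn analytically open in $\ov{\G\cdot g}(F)\supset\cl(D)$, every $g_0\in O$ admits an analytic neighborhood $U\subset G$ with $U\cap\cl(D)\subset O$. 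Since the analytic closure $\ov{D}\an$ of $D$ in $G$ is analytically closed, hence saturated by Remark~\re{ratrem}(b), we have $\sat(D)\subset\ov{D}\an$, so $g_0\in O\subset\sat(D)\subset\ov{D}\an$ implies $U\cap D\neq\emp$; using $D\subset\cl(D)$, we conclude $\emp\neq U\cap D\subset O\cap D$.

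The main obstacle is the local field case, where I rely on smoothness of the orbit map to place the $G$-conjugacy class analytically open inside the $F$-points of the algebraic conjugacy class, and on the fact (again from Remark~\re{ratrem}(b)) that analytically closed subsets of $G$ are saturated. The careful bookkeeping of Zariski versus analytic open and closed subsets is the delicate point, though both underlying ingredients are entirely standard.
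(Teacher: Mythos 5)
Your proof is correct and follows essentially the same strategy as the paper: first observe that $D\subset\sat(D)\subset\cl(D)$ (so all three sets have the same Zariski closure), then show that any Zariski-dense $G$-conjugacy class $O$ in $\cl(D)$ is open in $\cl(D)$ (in the relevant topology), so that $O\cap D\neq\emptyset$ by density of $D$, whence $O\subset D$ by $\Ad G$-invariance. The paper phrases the openness step more tersely — "$\G\to\D$ is dominant, hence $G\to\cl(D)$ is open" — while you unpack it via smoothness of the orbit map and openness of $(\G\cdot g)(F)$ in $\ov{\G\cdot g}(F)$, which is a useful elaboration of the same idea.
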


\begin{proof}
Let $\cl(D)\subset G$ be the closure of $D$ in the Zariski topology if $F$ is algebraically closed, and in the analytic topology if $F$ is local.
Then, as in the proof of \rco{LS}, we have $D\subset \sat(D)\subset \cl(D)$. Thus, it suffices to show that $D^{\heartsuit}=\cl(D)^{\heartsuit}$.

Let $O\subset \cl(D)$ be a Zariski dense $G$-conjugacy class, and let $\D\subset \G$ be the Zariski closure of $D$.
Choose $x\in O$. Then the morphism $\G\to \D:g\mapsto gxg^{-1}$ is dominant. Therefore, in both cases, the corresponding
map $G\to \cl(D)$ is open. Thus $O\subset\cl(D)$ is open, hence $O\subset D$.
\end{proof}

\begin{Cor} \label{C:LS2}
Let $F$ be either algebraically closed or local. Then the subset $C^{\heartsuit}_{P;G}\subset G$ does not depend on $\P$.
\end{Cor}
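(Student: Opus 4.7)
The plan is essentially to combine the two results that immediately precede the corollary: \rt{LS} (saturation independence) and \rl{op} (the $\heartsuit$-locus is insensitive to saturation). First I would check that $C_{P;G}$ is an $\Ad G$-invariant subset of $G$, so that \rl{op} applies. This is immediate from the construction in \re{unipclas}(d): the subset $\Ind_P^G(C_P)\subset \wt{G}_P$ is $G$-invariant under the natural $G$-action, and $a_{P,G}:\wt{G}_P\to G$ is $G$-equivariant, so its image $C_{P;G}$ is $\Ad G$-invariant.

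Next I would apply \rl{op} with $D:=C_{P;G}$ to obtain the equality
\[
C^{\heartsuit}_{P;G} \;=\; \sat(C_{P;G})^{\heartsuit}.
\]
This reduces the problem to showing that the right-hand side is independent of $\P$. But \rt{LS} asserts precisely that $\sat(C_{P;G})$ does not depend on $\P$, and therefore neither does $\sat(C_{P;G})^{\heartsuit}$. Combining these two equalities gives the conclusion.

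There is no real obstacle here; the entire content lies in the two preceding results. The only thing one has to be careful about is the hypothesis of \rl{op}, which requires $F$ to be either algebraically closed or local (in order to use the equality $D\subset\sat(D)\subset\cl(D)$ with $\cl$ taken in the Zariski or the analytic topology, respectively). These are exactly the hypotheses imposed in the statement of the corollary, so the application is legitimate.
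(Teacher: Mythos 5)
Your proof is correct and matches the paper's argument exactly: combine \rt{LS} with \rl{op} applied to $D=C_{P;G}$. The additional check of $\Ad G$-invariance of $C_{P;G}$ (needed as a hypothesis of \rl{op}) is a small but appropriate verification that the paper leaves implicit.
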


\begin{proof}
This follows immediately from \rt{LS} and \rl{op}.
\end{proof}

\begin{Emp}
{\bf Remark.}
%(a) If one is only interested in \rco{LS} or \rco{LS2}, the argument could be simplified. Namely, instead of considering saturation, one could consider the closure in the Zariski or natural topology as in the proof of \rl{op}.
We do not expect that the conclusion  \rl{op} holds for an arbitrary field $F$. We wonder whether the equality
$\sat(C_{P;G})^{\heartsuit}=C^{\heartsuit}_{P;G}$ always holds.
\end{Emp}

\section{On a theorem of Harish-Chandra}

\noindent The goal of this section is to explain the proof of the following result, usually attributed to Harish-Chandra.

\begin{Thm} \label{T:HC}
Let $F$ be a local non-archimedean field of characteristic zero, and let $h\in\C{H}(G)_G$  be such that $O_{\gm}(h)=0$ for every
$\gm\in G^{\rss}$. Then $h=0$.
\end{Thm}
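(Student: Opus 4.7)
The plan is to reduce to the Lie algebra version \cite[Thm 3.1]{HC} by Harish--Chandra descent at each semisimple element, using the characteristic zero hypothesis to identify a neighborhood of a semisimple $s\in G$ with a neighborhood of $0\in\Lie\Z_\G(s)$ via $X\mapsto s\exp(X)$.

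Assume for contradiction that $h\neq 0$ in $\C{H}(G)_G$. The first step is localization: find a semisimple $s\in G$, set $\H:=\Z_\G(s)$ (a connected reductive equal-rank subgroup of $\G$ by \re{eqrk}), and produce an $\Ad H$-invariant open neighborhood $V\subset\Lie H$ of $0$ together with the induced open neighborhood $U=\G\times^\H W\subset G$ of the orbit $G\cdot s$, where $W:=s\exp(V)\subset H$, such that the restriction of $h$ to $\C{H}(U)_G$ is nonzero. Then descend: combining the $H$-equivariant analytic isomorphism $V\isom W$, $X\mapsto s\exp(X)$, with the isomorphism $\varphi_H^G:\C{H}(U)_G\isom\C{H}(W)_H$ coming from \re{ind}(b) and \re{alg}, one obtains a nonzero element $\wt h\in\C{H}(V)_H\subset\C{H}(\Lie H)_H$.

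Next, compare orbital integrals. For $X\in V$ regular semisimple in $\Lie H$ (and $V$ small enough that $s\exp(X)\in G^{\rss}$, which is automatic near $0$ since $s\in \H^{\reg/\G}$ by \re{eqrk}(b)), a Jacobian calculation based on \rl{meas} yields
\[
O^G_{s\exp(X)}(h)\;=\;|\Dt_{\H,\G}(s\exp(X))|\cdot O^H_X(\wt h),
\]
with $|\Dt_{\H,\G}|$ nowhere vanishing on $V$. The hypothesis $O^G_{s\exp(X)}(h)=0$ thus forces $O^H_X(\wt h)=0$ for every regular semisimple $X\in V$. Extending $\wt h$ by zero to $\Lie H$ produces an element of $\C{H}(\Lie H)_H$ whose regular semisimple orbital integrals all vanish, so by \cite[Thm 3.1]{HC} applied to $\H$ this extension is zero in $\C{H}(\Lie H)_H$; hence $\wt h=0$ in $\C{H}(V)_H$, contradicting our choice of $s$.

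The main obstacle is the localization step: converting the non-triviality of $h$ in $\C{H}(G)_G$ into non-triviality of its germ at some semisimple orbit in the slice-neighborhood sense above. The natural approach is to use the Chevalley map $\nu_\G:\G\to\c_\G$ to pin down a point $\bar x\in c_G$ at which the germ of $h$ is nontrivial, and then produce by a Jordan decomposition argument a semisimple $s\in G$ with $\nu_G(s)=\bar x$ whose slice $U$ still sees a nonzero germ of $h$. The cleanest implementation is probably by induction on the semisimple rank of $\G$: for non-central $s$ the centralizer $\H$ has strictly smaller semisimple rank, so the induction applies at the descended step, and in the base case (only $s\in Z(\G)$ contributing) the whole problem reduces to $\C{H}(\Lie \G)_G$ and is handled directly by \cite[Thm 3.1]{HC}.
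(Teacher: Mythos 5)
Your overall strategy---Harish--Chandra descent at a semisimple point $s$ via the centralizer $H=\Z_\G(s)^0$, then reduction to the Lie algebra theorem \cite[Thm 3.1]{HC} using $X\mapsto s\exp(X)$---is essentially the approach the paper takes, and the descent step itself (the Jacobian formula, the use of $\varphi_H^G$, the density needed to pass from vanishing of $O_\gamma(h)$ for $\gamma\in H\cap G^{\rss}$ to vanishing for all of $H^{\rss}$, which you would still need to spell out as in \re{orbapl}) can be made to work. The paper does it slightly differently: the exponential map is invoked only at $s\in Z(G)$, and for non-central $s$ one descends to the \emph{group} $H$ and applies the induction hypothesis on $G$; this is tidier because it avoids repeating the exp-map bookkeeping at every $s$, but it is the same idea.

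The genuine gap is exactly the one you flag as the ``main obstacle'', and it is not resolved by the sketch you give. To localize the nonvanishing of $h$ at a \emph{semisimple} orbit you must show that a $G$-domain $U$ contains $g$ if and only if it contains the semisimple part $s$ of $g$. One direction (closure of the $\Ad G$-orbit of $g=su$ contains $s$) is not automatic: it is precisely the group-theoretic fact that the closure of the $\Ad H$-orbit of the unipotent element $u\in H=\Z_\G(s)^0$ contains $1$, and over a $p$-adic field this requires \emph{rationality} of the degenerating one-parameter subgroup, which is supplied by Kempf's instability theorem \cite[Cor.~4.3]{Ke}. Without this input your proposed ``produce by a Jordan decomposition argument a semisimple $s$ with $\nu_G(s)=\bar x$ whose slice still sees a nonzero germ of $h$'' does not go through: a representative $\wt h$ of $h$ has compact support that may meet only non-semisimple orbits, and you have no a priori control that the $G$-domain around the associated semisimple $s$ actually captures those orbits. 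Once you add the Kempf step, the cleanest way to finish is the contrapositive form: for each semisimple $s$ build a $G$-domain $U_s\ni s$ with $h|_{U_s}=0$; by Kempf these $U_s$ cover $G$; by compactness of $\operatorname{supp}\wt h$ finitely many suffice, giving $h=0$ directly rather than by contradiction.

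One secondary point worth tightening: for the descent neighborhood you need $U=\im\bigl(a_{H,G}:\Ind_H^G(W)\to G\bigr)$ to be a $G$-domain, i.e.\ open \emph{and closed}. Openness comes from $a_{H,G}$ being a local isomorphism over $H^{\reg/G}$ (\rco{etale}), but closedness requires properness of the restricted map, which the paper obtains by taking $W$ to be the preimage under $\nu_H$ of a compact open subset of $c_H^{\reg/G}$ and invoking finiteness of $\iota_{\H,\G}$ (\rco{eqrank}). If you take $W=s\exp(V)$ for a small compact open $\Ad H$-invariant $V\subset\Lie H$ you should verify directly that the resulting map is proper; this is true but not for free.
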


\begin{Emp} \label{E:remHC}
{\bf The Lie algebra analog.} Let $\frak{g}$ be the Lie algebra of $\G$, equipped with  the adjoint action of $G$, and let $h\in\C{H}(\frak{g})_G$  be such that $O_{x}(h)=0$ for every $x\in \frak{g}^{\rss}$. Then the original theorem of Harish-Chandra (\cite[Thm 3.1]{HC}) asserts that $h=0$.
\end{Emp}

The goal of this section is to deduce \rt{HC} from its Lie algebra analog.

\begin{Emp} \label{E:gdomain}
{\bf $G$-domains.} (a) Let $X$ be a smooth analytic variety over $F$ equipped with an action of $G$, let $\C{H}(X)$ be the
space of locally constant measures with compact support (see \re{smmeas}) and let $\C{H}(X)_G$ be the space of
$G$-coinvariants.

(b) By a {\em $G$-domain} in $X$, we mean an  open and closed $G$-invariant subset $U\subset X$.
Then $\C{H}(U)\subset\C{H}(X)$ is a $G$-invariant subspace, and the map $h\mapsto 1_U\cdot h$ is a $G$-equivariant projection $\C{H}(X)\to\C{H}(U)$.
Taking $G$-coinvariants, we get an inclusion  $\C{H}(U)_G\hra\C{H}(X)_G$ and a projection  $\C{H}(X)_G\to\C{H}(U)_G\subset\C{H}(X)_G:h\mapsto h|_U$.
\end{Emp}

\begin{Lem} \label{L:inj}
Let $f:X\to Y$ be a proper, surjective $G$-equivariant local isomorphism  between smooth analytic varieties.
Then the pullback map $f^*:\C{H}(Y)_G\to\C{H}(X)_G$ (see \re{pullback}(b)) is injective.
\end{Lem}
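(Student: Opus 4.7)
The plan is to construct an explicit left inverse of $f^*$ at the level of $G$-coinvariants, built from the pushforward.

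First, I would observe that since $f$ is a proper, surjective local isomorphism, it is a finite covering map; in particular the degree function
\[
d:Y\to\B{Z}_{>0},\qquad y\mapsto |f^{-1}(y)|,
\]
is locally constant. The $G$-equivariance of $f$ implies that $d$ is $G$-invariant. Moreover, for any $y\in Y$, properness plus the local-isomorphism property yield a small open neighborhood $V\subset Y$ on which $f^{-1}(V)$ splits as a disjoint union $\sqcup_{i=1}^{d(V)} V_i$ with each $V_i\to V$ an isomorphism, i.e. $f$ is locally trivial as a covering.

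Second, because $f$ is smooth (being a local isomorphism) and proper, the pushforward of \rl{push}(a) yields a map $f_!:\C{H}(X)\to\C{H}(Y)$ which is $G$-equivariant and hence descends to $f_!:\C{H}(X)_G\to\C{H}(Y)_G$. The key computation is local: on a trivializing open $V$ as above, $f^*$ simply duplicates a measure to each sheet, so $f_!\circ f^*$ acts on measures supported in $V$ as multiplication by the integer $d(V)$. Globalizing, one gets the identity $f_!\circ f^*=m_d$ on $\C{H}(Y)$, where $m_d$ denotes multiplication by $d$; by the $G$-invariance of $d$ this identity also holds on $\C{H}(Y)_G$.

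Finally, since $d$ is locally constant, strictly positive, and $G$-invariant, multiplication by $d$ is a $G$-equivariant automorphism of $\C{H}(Y)$ with inverse $m_{1/d}$, and hence induces an automorphism of $\C{H}(Y)_G$. Therefore $m_d^{-1}\circ f_!$ is a left inverse to $f^*:\C{H}(Y)_G\to\C{H}(X)_G$, which proves the injectivity. I do not anticipate any genuine obstacle; the only nontrivial checks are the local-triviality of the covering $f$ and the identity $f_!\circ f^*=m_d$, both of which are routine.
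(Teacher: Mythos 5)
Your proof is correct and uses essentially the same idea as the paper: the paper decomposes $Y$ into the $G$-domains $Y_m$ on which the fiber cardinality is constantly $m$ and observes $f_!f^* = m$ there, which is exactly your statement that $f_!f^* = m_d$ with $d$ the locally constant $G$-invariant degree function, followed by inverting $m_d$. The only cosmetic difference is that you package the level sets of $d$ into a single multiplication operator rather than summing over the $Y_m$'s.
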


\begin{proof}
For every $m\in\B{N}$, we denote by $Y_m\subset Y$ the set of all $y\in Y$ such that the cardinality of $f^{-1}(y)$ is $m$.
The assumptions on $f$ imply that every $Y_m\subset Y$ is a $G$-domain, and that $Y$ is the disjoint union of the $Y_m$'s.
Then every $X_m:=f^{-1}(Y_m)\subset X$ is a $G$-domain as well, and it suffices to show that the induced map $f^*:\C{H}(Y_m)_G\to\C{H}(X_m)_G$  is injective.
Since for every $h\in\C{H}(Y_m)$  we have $f_!f^*(h)=mh$, we are done.
\end{proof}

\begin{proof}[Proof of \rt{HC}]
We carry out the proof in five steps.

\vskip4truept

{\bf Step 1.}  There exists a $G$-domain $U\ni 1$ in $G$ such that $h|_U=0$.

\begin{proof}
Observe first that there exist $G$-domains $\frak{u}\ni 0$ in $\frak{g}$ and  $U\ni 1$ in $G$ such that
the exponential map induces an $\Ad G$-equivariant analytic isomorphism $\epsilon:\frak{u}\isom U$.
Namely, if $\G=\GL_n$, the assertion is straightforward, and the general case follows from it.

We claim that this $U$ satisfies the required property. Indeed, consider the pullback
$h':=\epsilon^*(h|_U)\in\C{H}(\frak{u})_G\subset \C{H}(\frak{g})_G$. It suffices to show
that $h'=0$. For $x\in \frak{g}^{\rss}$ we have $O_{x}(h')=0$ if $x\notin\frak{u}$, since
$h'\in\C{H}(\frak{u})_G$, and $O_{x}(h')=O_{\epsilon(x)}(h)=0$ if $x\in \frak{u}$ by our assumption on $h$.
Then $h'=0$ by \cite[Thm 3.1]{HC} (see \re{remHC}).
\end{proof}

{\bf Step 2.}  For every $s\in Z(G)$ there exists a $G$-domain $U\ni s$  in $G$ such that $h|_U=0$.

\begin{proof}
Since the map $g\mapsto gs:G\to G$ is $\Ad G$-equivariant, the assertion follows
from the $s=1$ case shown in Step 1. More precisely, if $U\ni 1$ is the $G$-domain constructed in Step 1, then $sU\ni s$
is the $G$-domain such that $h|_U=0$.
\end{proof}

{\bf Step 3.}  For every semisimple $s\in G\sm Z(G)$ there exists a $G$-domain $U\ni s$  in $G$  such that $h|_U=0$.

\begin{proof}
Let $\H:=\G_s^0$ be the connected centralizer of $s$. Then $\H\subsetneq \G$, and by induction, we may assume that \rt{HC} is valid
for $\H$.

Let $\H^{\reg/\G}\subset\H$ and $\c_{\H}^{\reg/\G}\subset\c_\H$ be the open subschemes defined in \re{reg}(b).
Note that $\H\subset \G$ is an equal rank subgroup (see \re{eqrk}(a)), and $s\in \H^{\reg/\G}$. Indeed, let $\T\ni s$ be a maximal torus of $\G$. Then
$s\in\T\subset\H$, and $\Z_{\G}(s)^0=\H=\Z_{\H}(s)^0$. Hence $s\in \H^{\reg/\G}$ by \re{eqrk}(b).

Let $\nu_{\H}:\H^{\reg/\G}\to \c_{\H}^{\reg/\G}$ be the Chevalley map (see \re{chev}(a) and \re{reg}(b)), and we denote by  $\nu_H:H^{\reg/G}\to c_{H}^{\reg/G}$ the induced map on $F$-points (compare \re{alcase}).

Choose an open and compact neigbourhood $V\subset c_{H}^{\reg/G}$ of $\nu_H(s)$, and consider its preimage $U':=\nu_{H}^{-1}(V)\subset H^{\reg/G}\subset H$.
Then $U'\subset H$ is an $H$-domain, so we can form the induced space $\Ind_H^G(U')$ (see \re{ind}) and the $G$-equivariant morphism
$f:=a_{H,G}|_{\Ind_H^G(U')}:\Ind_H^G(U')\to G:[g,x]\mapsto gxg^{-1}$ (compare \re{grspr}(c)).

Recall that the subset $\Ind_H^G(U')\subset \Ind_{\H}^{\G}(\H^{\reg/\G})(F)$ is open and closed (by \re{alg}(a)).
Since $a_{\H,\G}:\Ind_{\H}^{\G}(\H^{\reg/\G})\to \G$ is \'etale (see \rco{etale}), we conclude that $f$ is a local isomorphism.
On the other hand, since both morphisms $\iota_{\H,\G}:\Ind_{\H}^{\G}(\H^{\reg/\G})\to \G\times_{\c_\G} \c_\H^{\reg/\G}$ (see \rco{eqrank})
and $\pi_{\H,\G}:\c_{\H}\to\c_\G$ (see \re{chev}(b)) are finite, and $V\subset c_H$ is a compact subset, the composition
\[
f:\Ind_H^G(U')\overset{\iota_{H,G}}{\lra} G\times_{c_G} V \overset{\pi_{H,G}}{\lra}G
\]
is proper. Therefore $U:=\im f$ is a $G$-domain containing $s$, and we claim that $h|_U=0$.

By \rl{inj}, the induced map $f^*:\C{H}(U)_G\to\C{H}(\Ind_{H}^{G}(U'))_G$ is injective.
Let $\phi:\C{H}(U)_G\to\C{H}(U')_H$ be the composition of $f^*$ and the isomorphism $\varphi_H^G:\C{H}(\Ind_{H}^{G}(U'))_G\isom \C{H}(U')_H$ from \re{ind}(c).
Then $\phi$ is injective, thus it remains to show that $h':=\phi(h|_U)\in \C{H}(U')_H\subset \C{H}(H)_H$ is zero.

Since \rt{HC} is valid for $H$, it suffices to show that $0_{\gm}(h')=0$ for all $\gm\in H^{\rss}$.
This is clear for $\gm\notin U'$, since $h'\in \C{H}(U')_H$. By construction, for every $\gm\in U'\cap G^{\rss}$ we have $O_{\gm}(h')=O_{\gm}(h)$. Hence
$O_{\gm}(h')=0$ by the assumption on $h$. This shows that $0_{\gm}(h')=0$ for all $\gm\in H^{G-\rss}:=H\cap G^{\rss}$.

Finally, since $S\cap H^{G-\rss}\subset S\cap H^{\rss}$ is dense for every maximal torus $\S\subset \H$,
the equality $0_{\gm}(h')=0$ for every $\gm\in H^{\rss}$ now follows from \re{orbapl}.
\end{proof}

{\bf Step 4.} Let $U\subset G$ be a $G$-domain, and let $g=su$ be the Jordan decomposition of $g\in G$. Then $g\in U$ is and only if
$s\in U$.

\begin{proof}
Set $H:=G_s^0$. It suffices to show the closure of the
$\Ad H$-orbit of $u$ contains $1$, hence
the closure of the $\Ad G$-orbit of $g$ contains $s$.

Since $u$ is a unipotent element of $H$, the Zariski closure of the
$\Ad \H$-orbit of $u$ contains $1$. Hence the assertion follows from a theorem of Kempf
\cite[Cor. 4.3]{Ke}.
\end{proof}

{\bf Step 5: Completion of the proof.} By Steps 3 and 4, for every  $g\in G$ there exists a $G$-domain $U\ni g$  in $G$ such that $h|_U=0$.
From this the assertion follows. Indeed, choose a lift $\wt{h}\in\C{H}(G)$ of $h$, and
let $K\subset G$ be the support of $\wt{h}$. Since $K$ is compact, there is a finite collection of $G$-domains
$U_i,i=1,\ldots,n$ such that $K\subset \cup_{i}U_i$ and each $h_i:=h|_{U_i}$ is zero. Moreover,
replacing $U_j$ by $U_j\sm (\cup_{i=1}^{j-1}U_i)$ we can assume that the $U_i$'s are disjoint.
Then $h=\sum_{i=1}^n h_i=0$.
 \end{proof}

\end{document}